\numberwithin{equation}{section}
\theoremstyle{definition}
\newtheorem{dfn}{Definition}[section]
\newtheorem{rem}[dfn]{Remark}
\theoremstyle{plain}
\newtheorem{thm}[dfn]{Theorem}
\newtheorem*{thmI}{Theorem I}
\newtheorem*{thmII}{Theorem II}
\newtheorem{prp}[dfn]{Proposition}
\newtheorem{cor}[dfn]{Corollary}
\newtheorem{lem}[dfn]{Lemma}
\newtheorem{prob}[dfn]{Problem}
\title[Solution of a $q$-difference Noether problem...]{Solution of a $q$-difference Noether problem and
 the quantum Gelfand-Kirillov conjecture for $\mathfrak{gl}_N$}
\author{Vyacheslav Futorny}
\author{Jonas T. Hartwig }
\address{Department of Mathematics,
 University of S\~ao Paulo,
 S\~ao Paulo, Brazil and Max Planck Institute for Mathematics, Bonn, Germany}
 \email{futorny@ime.usp.br}
\address{Department of Mathematics, Stanford University, Stanford, CA, USA}
\email{jonas.hartwig@gmail.com }
\newcommand\vi{\varphi}
\newcommand\al{\alpha}\newcommand\be{\beta} \newcommand\ga{\gamma}
 \newcommand\la{\lambda}\newcommand\La{\Lambda}
\newcommand{\Ga}{\Gamma}
\newcommand{\tmu}{\widetilde{\mu}}
\newcommand{\iv}[2]{\llbracket #1,#2 \rrbracket}  
\newcommand\mf{\mathfrak}
\newcommand\Mscr{\mathcal{M}}
\newcommand\C{\mathbb{C}}    \newcommand\Z{\mathbb{Z}}   
\newcommand\K{\Bbbk}
\newcommand\F{\mathbb{F}}
 \newcommand\Gal{\mathrm{Gal}}
\newcommand{\gl}{\mathfrak{gl}}
\newcommand{\im}{\mathrm{im}}
\DeclareMathOperator{\End}{End}
\DeclareMathOperator{\sgn}{sgn}
\DeclareMathOperator{\Frac}{Frac}
\DeclareMathOperator{\Stab}{Stab}
\DeclareMathOperator{\Specm}{Specm}
\DeclareMathOperator{\Supp}{Supp}
\DeclareMathOperator{\chara}{char}
\begin{document}

\begin{abstract}
It is shown that the $q$-difference Noether problem  for all
classical Weyl groups has a positive solution, simultaneously
generalizing well known results on multisymmetric functions of
Mattuck \cite{Mattuck1968} and Miyata \cite{Mi} in the case $q=1$,
and $q$-deforming the noncommutative Noether problem for the
symmetric group \cite{FMO}. It is also shown that
 the quantum Gelfand-Kirillov conjecture  for
$\mathfrak{gl}_N$ (for a generic $q$) follows from the positive
solution of the $q$-difference Noether problem for the Weyl group
of type $D_n$. The proof is based on the theory of Galois rings
\cite{FO}.  From here we obtain a new proof of the quantum
Gelfand-Kirillov conjecture for $\mathfrak{sl}_N$, thus recovering
the result of Fauquant-Millet \cite{FM}. Moreover, we provide an
explicit description of skew fields of fractions for quantized
$\mathfrak{gl}_N$ and $\mathfrak{sl}_N$ generalizing \cite{AD}.
\end{abstract}

\maketitle

\section{Introduction}
One important tool in the study of different noncommutative domains is a comparison
of their skew
fields of fractions. One might recall the concepts of birational
equivalence in algebraic geometry and of derived equivalence in
category theory. This makes the structure problem of division algebras very important.
Sometime the situation is especially pleasant: it
was shows by Farkas, Schofield, Snider and Stafford  \cite{FSSS} that the skew field of fractions of the
group algebra of finitely generated torsion free nilpotent group
determines the group up to isomorphism. Of course, in general the
problem is way more complicated.
As it was pointed
in \cite{FSSS} very little is known about division algebras which
are infinite dimensional over their centers. In particular, it is
very difficult to decide when two such algebras are isomorphic.

 The classical
Gelfand-Kirillov conjecture states that the skew field  of
fractions (equivalently, quotient division ring) of the universal
enveloping algebra of an algebraic Lie algebra over an
algebraically closed field of characteristic zero is isomorphic to
a Weyl field, that is, a skew field of fractions of the Weyl
algebra over a purely transcendental extension of the ground field
$\K$. This conjecture was proven by Gelfand and Kirillov~\cite{GK}
for $\mf{gl}_N$ and $\mf{sl}_N$ and for nilpotent Lie algebras.
For solvable Lie algebras the conjecture was proven independently
by Borho, Gabriel and Rentschler \cite{BGR}, Joseph \cite{Jo} and
McConnell \cite{Mc}.  Moreover, Alev, Ooms and Van den Bergh~\cite{AOV1}
proved the conjecture for all Lie algebras of dimension at most
eight. However, the same authors found counterexamples to the
conjecture for mixed Lie algebras \cite{AOV2}. Also, Premet
\cite{P} showed that the conjecture fails for orthogonal Lie
algebras and for simple Lie algebras of types $E_6,E_7,E_8$ and $F_4$.

An analogue of the Gelfand-Kirillov conjecture was shown for finite
$W$-algebras of type $A$ \cite{FMO}.

In this paper we fully solve the quantum Gelfand-Kirillov conjecture for
the quantized $\gl_N$ over $\C$.  Let $\K$ be a field, $q\in\K$ be
nonzero, $S=(s_{ij})$ be a skew-symmetric
$n\times n$ integer matrix. Define the following \emph{quantum
polynomial algebra over $\K$}:
\begin{equation} \label{eq:AqS}
\K_{q,S}[X_1,\ldots,X_n]:=\K\langle X_1,\ldots,X_n\mid
X_iX_j=q^{s_{ij}}X_jX_i\rangle.
\end{equation}
A \emph{quantum Weyl field over $\K$} is the skew field of
fractions of an algebra of the form \eqref{eq:AqS}. We will
discuss alternative definitions of quantum Weyl fields in
Section \ref{sec:qWskewfields}.

We say that a unital associative $\C$-algebra $A$ admitting a skew
field of fractions $\Frac(A)$ satisfies the \emph{quantum
Gelfand-Kirillov conjecture} if $\Frac(A)$ is isomorphic to a
quantum Weyl field over a purely transcendental field extension
$\K$ of $\C$ (cf. \cite{BG}).

The quantum Gelfand-Kirillov conjecture for
$U_q(\mathfrak g)$ has been studied for almost 20 years by many
authors. Let $\mathfrak{g}$ be any complex finite-dimensional
semi-simple Lie algebra, $\mathfrak{n}$ the nilpotent radical of a
Borel subalgebra $\mf{b}$ of $\mathfrak{g}$, and $G$ the simply
connected group associated to $\mathfrak{g}$. B. Feigin formulated
the quantum Gelfand-Kirillov conjecture at RIMS in 1992 for
$U_q(\mathfrak{n})$, which is now known as Feigin's conjecture.
For generic values of $q$, Alev, Dumas
\cite{AD}, Iohara, Malikov \cite{IM} and Joseph \cite{Jo1}
have shown that $\Frac U_q(\mathfrak{n})$
satisfies the quantum Gelfand-Kirillov conjecture, while Caldero
\cite{Ca} proved it for $\Frac U_q(\mathfrak{n})$ and $\Frac
\C_q[G]$. Panov \cite{Pa} has proved that $U_q(\mf{b})$ (and
generalizations) also satisfy the quantum Gelfand-Kirillov
conjecture.

That the skew field of fractions of (certain extensions of)
$U_q(\mf{sl}_2)$ and
 $U_q(\mf{sl}_3)$ satisfy the quantum
Gelfand-Kirillov conjecture was proved in \cite{AD} by explicitly
calculating the skew fields. Finally, Fauquant-Millet \cite{FM}
proved the quantum Gelfand-Kirillov conjecture for
$U_q(\mf{sl}_N)$ by modifying the original proof of Gelfand and
Kirillov in the classical case.

We refer the reader to \cite{BG}, \cite{G} and references therein for a
detailed historical account of the Gelfand-Kirillov conjecture for quantized enveloping algebras.

Our contribution to the quantum Gelfand-Kirillov conjecture
consists of explicit calculation of the skew fields for
$U_q(\mf{gl}_N)$ and (certain extension of) $U_q(\mf{sl}_N)$ which provides a new proof
for the conjecture in these cases. In particular, we recover the
results of Alev and Dumas \cite{AD}.

Let
$\mathcal{O}_q(\K^2)$ 
 denotes the \emph{quantum plane} $\K\langle x,y\mid
yx=qxy\rangle$ over a field $\K$, $\bar{q}=(q_1, \ldots, q_n)$ a
tuple of nonzero elements of $\K$. Let $n$ be a positive integer
and $\mathcal{O}_{\bar{q}}(\K^{2n})$ a quantum affine space:
\begin{gather}
\mathcal{O}_{\bar{q}}(\K^{2n}):=\mathcal{O}_{q_1}(\K^2)\otimes_\K
\mathcal{O}_{q_2}(\K^2)\otimes_\K\cdots\otimes_\K \mathcal{O}_{q_n}(\K^2) \\
\simeq \K\langle x_1,\ldots, x_n, y_1,\ldots,y_n\mid
y_ix_j=q^{\delta_{ij}}x_jy_i,\,
 [x_i,x_j]=[y_i,y_j]=0, \,\forall i,j\in\iv{1}{n}\rangle.
\nonumber
\end{gather}
When $q_1=\ldots=q_n=q$ then we simply denote
$\mathcal{O}_{\bar q}(\K^{2n})$ by $\mathcal{O}_q(\K^{2n})$.

We show

\begin{thmI}
The quantum Gelfand-Kirillov conjecture holds for $U_q(\mf{gl}_N)$
for $q\in \C$ not a root of unity. Explicitly, there exists a
$\C$-algebra isomorphism
\begin{equation}
\Frac\big(U_q(\mf{gl}_N)\big) \simeq
 \Frac \Big( \mathcal{O}_q(\K^2)^{\otimes_\K (N-1)}\otimes_\K
 \mathcal{O}_{q^2}(\K^2)^{\otimes_\K (N-1)(N-2)/2} \Big),
\end{equation}
where $\K$ denotes the field $\C(Z_1,\ldots,Z_N)$ of rational
functions in $N$ variables over $\C$.
\end{thmI}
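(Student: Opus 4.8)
The plan is to reduce the computation of $\Frac U_q(\gl_N)$ to a $q$-difference Noether problem for a classical Weyl group by exhibiting $U_q(\gl_N)$ as a Galois ring over a suitable commutative subalgebra, in the sense of \cite{FO}. First I would recall the Gelfand--Tsetlin subalgebra $\Gamma$ of $U_q(\gl_N)$, generated by the centers of the chain $U_q(\gl_1)\subset U_q(\gl_2)\subset\cdots\subset U_q(\gl_N)$, and invoke the known fact that $U_q(\gl_N)$ is a Galois ring with respect to $\Gamma$ for the group $G=S_1\times S_2\times\cdots\times S_N$ acting on a polynomial (Laurent-polynomial) algebra $\Lambda$ in the Gelfand--Tsetlin variables, with $\Gamma=\Lambda^G$. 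By the theory of Galois rings, $\Frac U_q(\gl_N)$ is then the skew group algebra--type crossed product $\mathcal{M}=\Frac(\Lambda)\ast G$ twisted by the shift action, and its structure is controlled by the invariants $\Frac(\Lambda)^G$ together with the shift operators implementing the "raising" and "lowering" generators.

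The key steps, in order, are: (1) identify precisely the $q$-shift action of the relevant symmetric groups on the Laurent polynomial ring in the Gelfand--Tsetlin generators, keeping careful track of which variables get shifted by $q$ versus $q^2$ (the row index parity in the Gelfand--Tsetlin pattern is what produces the two different quantum parameters in Theorem I); (2) apply the positive solution of the $q$-difference Noether problem for the Weyl group of type $D_n$ (assumed, per the excerpt) together with the product structure $S_1\times\cdots\times S_N$, reducing each factor to a standard model — here one uses that the Noether problem for $S_k$ embeds into or follows from the $D_k$ case, which is the bridge the paper sets up; (3) assemble the resulting skew subfields into a single quantum Weyl field, checking that the commutation relations among the chosen generators are exactly $yx=q^{\pm}xy$ with the multiplicities $N-1$ and $(N-1)(N-2)/2$ as claimed; and (4) verify that the "center" directions — the $N$ central Gelfand--Tsetlin generators of the top $U_q(\gl_N)$, or rather a transcendence basis for the center — contribute the purely transcendental field $\K=\C(Z_1,\dots,Z_N)$ over which the quantum Weyl field is taken.

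The main obstacle I expect is step (2): matching the combinatorics of the $q$-difference Noether problem for the classical Weyl groups to the shift action actually appearing on the Gelfand--Tsetlin algebra. One must show that the action of each symmetric group factor on the appropriate rank-$k$ slice of Gelfand--Tsetlin variables, after a suitable monomial change of coordinates, is conjugate to the standard permutation $q$-shift action governed by the type $D_k$ (or $A_{k-1}$) Noether datum, and that the skew field of $q$-shift invariants is a quantum Weyl field with the predicted parameters. This requires both an explicit generating set for $\Frac(\Lambda)^G$ — the analogue of multisymmetric functions in the $q$-difference setting — and a verification that the adjoined shift operators for the Chevalley generators close up into the quantum-affine-space relations; the parity bookkeeping that separates $q$ from $q^2$ is the delicate point. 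The remaining steps are comparatively routine: step (1) is a direct computation with the $R$-matrix or with the known Gelfand--Tsetlin formulas for $U_q(\gl_N)$, step (3) is linear algebra over $\Z$ on the exponent matrix, and step (4) follows from identifying the transcendence degree and the center.
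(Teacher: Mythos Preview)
Your overall architecture is right—realize $U_q(\gl_N)$ as a Galois $\Gamma$-ring via the Gelfand--Tsetlin subalgebra, pass to the skew field of $G$-invariants in a skew monoid ring, and then invoke the $q$-difference Noether problem factor by factor—but there is a genuine error in your identification of the group $G$ and, as a consequence, in your explanation of where the two parameters $q$ and $q^2$ come from.

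The group is \emph{not} $S_1\times\cdots\times S_N$. The quantum Harish--Chandra map sends the center $Z_m$ of $U_q(\gl_m)$ onto $(U_m^0)^{W_m}$ where $W_m=S_m\ltimes\mathcal{E}_m$ is the Weyl group of type $D_m$: the invariants are symmetric polynomials in the $\widetilde K_i^{\,2}$ together with the product $\widetilde K_1\cdots\widetilde K_m$, so one needs even sign changes in addition to permutations. Correspondingly, the Gelfand--Tsetlin coefficients $a_{mi}^\pm$ are invariant under these type-$D$ sign flips (they are built from the factors $X_{mj}X_{mi}^{-1}-X_{mj}^{-1}X_{mi}$), and the image of $U_q(\gl_N)$ in $\La_S\ast\Mscr$ lands in $(\La_S\ast\Mscr)^G$ with $G=\prod_{m=1}^N W(D_m)$. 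If you only quotient by symmetric groups you will not recover $\Frac(\Gamma)$ as $L^G$, and the Galois-ring machinery does not apply.

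This also corrects your account of the $q$ versus $q^2$: it has nothing to do with ``row index parity'' in the Gelfand--Tsetlin pattern. Rather, it is the output of the type-$D$ Noether problem itself: one has
\[
\K_q(\bar x,\bar y)^{W(D_m)}\;\simeq\;\Frac\big(\K_q[x,y]\otimes_\K\K_{q^2}[x,y]^{\otimes_\K(m-1)}\big),
\]
so each factor $m\in\iv{1}{N-1}$ contributes one $q$-plane and $(m-1)$ $q^2$-planes, giving the totals $N-1$ and $\sum_{m=1}^{N-1}(m-1)=(N-1)(N-2)/2$. The top factor $m=N$ has no shifts acting and contributes only $(\Frac\La_N)^{W(D_N)}\simeq\C(Z_1,\ldots,Z_N)$ by Chevalley--Shephard--Todd. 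Your step (2) should therefore invoke the $D_m$ Noether problem directly (which in the paper is proved by reducing to the $B_m$ and then $S_m$ cases), not the $S_m$ problem with a separate parity argument.
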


The proof is based on the reduction of the quantum Gelfand-Kirillov
conjecture to the \emph{$q$-difference Noether problem} for the
Weyl group of type $D_n$.

%
%
Let $W_n=W(X_n)$ be the Weyl group of type $X_n$ where $X\in\{A,B,C,D\}$.
The group $W_n$ acts naturally on
$\mathcal{O}_q(\K^{2n})$ by $\K$-algebra automorphisms
(see Section \ref{section-weyl} for details).
Let $\mathcal{F}_{q,n}$ (respectively $\mathcal{F}_{\bar q,n}$)
denote the skew field of fractions of
$\mathcal{O}_q(\K^{2n})$ (respectively $\mathcal{O}_{\bar q,n}(\K^{2n})$).
The action of $W_n$ on
$\mathcal{O}_q(\K^{2n})$ induces an action of $W_n$ on
$\mathcal{F}_{q,n}$.
 We let
\[\mathcal{F}_{q,n}^{W_n}:=\big\{a\in \mathcal{F}_{q,n}\mid w(a)=a,\,\forall w\in W_n\big\}\]
denote the subalgebra (skew subfield) of invariants under $W_n$.
Consider the following problem, which we call the
\emph{q-difference Noether problem for $W_n$}:
\begin{prob}\label{prob:qNoether}
Do there exist $q_1,\ldots,q_n\in \langle q\rangle:=\{q^k\mid
k\in\Z\}$ such that
\begin{equation}\label{eq:qNoether}
\mathcal{F}_{q,n}^{W_n}\simeq \mathcal{F}_{\bar q,n},
\end{equation}
where ${\bar q}=(q_1, \ldots, q_n)$,
as $\K$-algebras?
\end{prob}

We answer this question affirmatively and prove our main result:

\begin{thmII} The $q$-difference Noether problem for
the group $W_n$ has a positive solution, namely
\begin{equation}
 \mathcal{F}_{q,n}^{W_n}\simeq  \mathcal{F}_{\bar q,n},
\end{equation}
where
\[\bar q=
\begin{cases}
(q,q,\ldots,q),& \text{if $W_n=W(A_n)=S_n$},\\
(q^2,q^2,\ldots,q^2),& \text{if $W_n=W(B_n)=W(C_n)$},\\
(q,q^2,q^2,\ldots,q^2),&\text{if $W_n=W(D_n)$}.
\end{cases}
\]
\end{thmII}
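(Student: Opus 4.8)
The plan is to treat the three cases $A_n$, $B_n=C_n$, and $D_n$ in increasing order of difficulty, since the invariant field for type $D_n$ will contain the one for type $B_n$ as an intermediate step, and type $A_n$ is a classical ingredient. First I would set up coordinates: write $\mathcal{O}_q(\K^{2n})$ with generators $x_i,y_i$ and localize to the skew field $\mathcal{F}_{q,n}$. The key structural observation is that each pair $(x_i,y_i)$ generates a copy of the quantum plane, and the natural elements to look at are the ``multiplicative'' combinations $u_i=x_iy_i$ (or $y_ix_i$), which are group-like under the torus action, together with ratios $x_i/x_j$ and products $x_ix_j$. The Weyl group $W_n$ acts by permuting the indices (the $S_n$ part) and, for $B_n$, $C_n$, $D_n$, by sign changes $x_i\leftrightarrow y_i$ on subsets of indices (an even number of them for $D_n$). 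So the strategy is to build explicit invariant elements and show they generate $\mathcal{F}_{q,n}^{W_n}$ as a skew field, then verify the defining $q$-commutation relations among them, matching the prescribed tuple $\bar q$.

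For type $A_n$, i.e. $W_n=S_n$ acting diagonally on the pairs $(x_i,y_i)$, I would mimic the commutative multisymmetric function theory of Mattuck and Miyata and the noncommutative version of \cite{FMO}: introduce the invariants analogous to power sums or elementary symmetric functions in the $x_i$ alone, say $p_k=\sum_i x_i^k$ or the coefficients of $\prod_i(t-x_i)$, adjoined with one ``twisted'' variable per pair that records $y_i$ up to the symmetric group action — concretely something like $\sum_i y_i x_i^{-1}\cdot(\text{a Lagrange-interpolation idempotent in the }x\text{'s})$. One then checks that the subfield they generate is all of $\mathcal{F}_{q,n}^{S_n}$ (a degree/transcendence-degree count plus a Galois-theoretic argument that $[\mathcal{F}_{q,n}:\mathcal{F}_{q,n}^{S_n}]=|S_n|$) and that, after a suitable change of generators, the relations collapse to those of $\mathcal{O}_q(\K^{2n})$, giving $\bar q=(q,\ldots,q)$. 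For types $B_n=C_n$, the extra symmetry $x_i\leftrightarrow y_i$ means one should work with the $S_n$-invariant building blocks $v_i=x_i+y_i$ or $x_iy_i$ together with $x_i/y_i$-type quantities that are now only fixed up to inversion; replacing $x_i$ by $x_i^2$-type invariants is what produces the jump $q\mapsto q^2$. The type $D_n$ case is obtained by enlarging the $B_n$-invariant field by one element that is fixed by the even-sign-change subgroup but not by a single transposition $x_1\leftrightarrow y_1$ — typically a product like $\prod_i(x_i-y_i)$ or $\prod_i x_i y_i^{-1}$ raised to an appropriate power — and this one distinguished generator is the source of the single $q$ (rather than $q^2$) in the tuple $(q,q^2,\ldots,q^2)$.

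The main obstacle I expect is the explicit identification of a generating set for $\mathcal{F}_{q,n}^{W_n}$ together with the verification that no further invariants are needed — i.e. proving the containment ``the subfield generated by our explicit invariants equals the full invariant skew field.'' In the commutative case this is the content of the Mattuck/Miyata theorems and is already nontrivial; in the $q$-deformed noncommutative setting one cannot simply invoke a polynomial-ring normalization, so the argument must be done by hand, presumably by (i) exhibiting $\mathcal{F}_{q,n}$ as a finite-dimensional left vector space over the candidate subfield of the expected dimension $|W_n|$, and (ii) invoking a noncommutative Galois-descent statement (of the kind available in the theory of Galois rings \cite{FO}) to conclude the candidate subfield is exactly the fixed field. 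A secondary technical point is bookkeeping of the $q$-commutation relations after the nonlinear change of variables — showing that the cross-relations between, say, a symmetrized $x$-type invariant and a symmetrized $y$-type invariant are again of the monomial form $AB=q^{\pm k}BA$ with the claimed exponents, and that all the ``unwanted'' commutators vanish — which is where the precise shape of $\bar q$ in each type gets pinned down.

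Finally, having settled these three cases, the type $D_n$ solution feeds directly into Theorem I: combined with the reduction (described in the body of the paper) of the quantum Gelfand-Kirillov conjecture for $U_q(\mathfrak{gl}_N)$ to the $q$-difference Noether problem for $W(D_n)$ via the theory of Galois rings, one reads off the explicit skew field $\Frac(\mathcal{O}_q(\K^2)^{\otimes_\K(N-1)}\otimes_\K \mathcal{O}_{q^2}(\K^2)^{\otimes_\K(N-1)(N-2)/2})$, the tensor-factor counts $N-1$ and $(N-1)(N-2)/2$ matching exactly the ``$q$'' slot and the ``$q^2$'' slots in the tuple $(q,q^2,\ldots,q^2)$ for $n=N-1$.
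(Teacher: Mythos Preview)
Your proposal contains a concrete misunderstanding of the group action that derails the $B_n$, $C_n$, $D_n$ cases. You write that the sign-change part of $W(B_n)$ acts by ``$x_i\leftrightarrow y_i$ on subsets of indices,'' but the swap $x_i\leftrightarrow y_i$ is not even a $\K$-algebra automorphism of the quantum plane $\K_q[x_i,y_i]$ unless $q^2=1$: from $y_ix_i=qx_iy_i$ the swap yields $x_iy_i=qy_ix_i$, i.e.\ $q\mapsto q^{-1}$. The action actually used (see \eqref{eq:Em_action}) is the literal sign change $x_i\mapsto -x_i$, $y_i\mapsto -y_i$; after the preliminary substitution $y_i\mapsto x_iy_i$ it becomes $x_i\mapsto -x_i$, $y_i\mapsto y_i$. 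With this corrected action the $(\Z/2\Z)^n$-invariants are generated by $x_i^2$ and $y_i$, and the relation $y_ix_i^2=q^2x_i^2y_i$ is exactly what produces the $q\mapsto q^2$ jump. Consequently your candidate invariants $v_i=x_i+y_i$, $x_i/y_i$, $\prod_i(x_i-y_i)$ are not the right objects; the distinguished $D_n$-invariant not fixed by $W(B_n)$ is simply $x_1x_2\cdots x_n$ (Lemma~\ref{lem:qNoether_D_lem1}).

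A second, more structural, point: your ordering of difficulty is inverted relative to the paper. The $B_n$ and $D_n$ cases are short reductions to the $A_n$ case (Theorems~\ref{thm:qNoether_B} and~\ref{thm:qNoether_D}), whereas the $S_n$ case is the technical heart and occupies all of Section~\ref{section-q-noether}. Your Lagrange-interpolation idea for defining the $t_j$ is exactly right (cf.\ \eqref{eq:tsystem}, \eqref{eq:PX}), and generation of the invariant skew field by $\{e_1,\dots,e_n,t_1,\dots,t_n\}$ is established directly by a spanning argument (Proposition~\ref{prp:leftmodule}), not by the Galois-ring descent you propose. But the relations \eqref{eq:tj_ek} between the $t_j$ and the $e_k$ are \emph{not} monomial $q$-commutations: they have inhomogeneous right-hand sides, and showing that ``the relations collapse'' requires the nontrivial recursive change of generators in Proposition~\ref{prp:inductive_procedure} and Theorem~\ref{thm:qNoether_main_result}, together with the additional Lemma~\ref{lem:alpha_beta} tracking how conjugation by $x_1\cdots x_n$ transports through the $S_n$ isomorphism (this is what makes the $D_n$ argument close). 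Your sketch does not anticipate this step, which is where most of the work lies.
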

This can be viewed as quantum
versions of classical results of Mattuck \cite{Mattuck1968} and of
Miyata \cite{Mi}.

As a corollary we get an isomorphism of $\K$-algebras
$$\big(\Frac (A_1^q(\K)^{\otimes_\K n})\big)^{S_n}\simeq \Frac
(A_1^q(\K)^{\otimes_\K n}),$$ where $A_1^q(\K):=\K\langle x,y\mid
yx-qxy=1\rangle$ (see Corollary~\ref{cor-weyl}). This result
 can be regarded as a $q$-deformation of the isomorphism
$\Frac(A_n(\K))^{S_n}\simeq \Frac(A_n(\K))$ proved in \cite{FMO}.
Here $A_n(\K)$ is the $n$:th Weyl algebra over $\K$.

Our proof of the quantum Gelfand-Kirillov conjecture relies on
the theory of Galois rings \cite{FO}. Using this theory and
Gelfand-Tsetlin representations constructed by Mazorchuk and
Turowska \cite{MT} we show that $U_q(\mf{gl}_N)$ can be embedded
into the $(W_1\times W_2\times\cdots\times W_N)$-invariants of a certain skew group ring
(Theorem~\ref{thm:UqGalois}), where $W_m$ is the Weyl group of
type $D_m$. Using this realization of $U_q(\mf{gl}_N)$ the problem
is then reduced to computation of the skew field of the
$W_m$-invariants in the tensor product of $m$ quantum planes. This
computation follows from positive solution of the $q$-difference
Noether problem for the Weyl group $W_m$.

\section{Preliminaries}

\subsection{Notation}
Unless otherwise stated,
the ground field $\K$ is arbitrary and $q\in\K$ is only assumed to be nonzero.
All rings and algebras will be understood to be associative and unital.
By a \emph{skew field} we mean a division ring. The skew field of fractions,
provided it exists, of an algebra $A$ will be denoted by $\Frac(A)$.
A well-known fact (see for example \cite[Sec.~3.2.1]{D})
is that if $A$ is an Ore domain, acted upon by a finite group $G$ with $|G|$ invertible in $A$, then
the invariants $A^G:=\{a\in A\mid g(a)=a,\,\forall g\in G\}$
is also an Ore domain and $\Frac(A^G)=\Frac(A)^G$.
We will use the generalized Kronecker delta notation $\delta_P$ for a statement $P$,
defined by
\begin{equation}\label{eq:kronecker}
\delta_P=\begin{cases}1,&\text{if $P$ is true,}\\0,&\text{otherwise.}\end{cases}
\end{equation}
For $a,b\in\Z$ we use the notation $\iv{a}{b}=\{x\in\Z\mid a\le x\le b\}$.
If a group $G$ acts on a ring $R$ by automorphisms, we denote the corresponding skew group ring by $R\ast G$.
We sometimes use the $q$-commutator notation $[a,b]_q=ab-qba$.

\subsection{The algebra \texorpdfstring{$U_q(\mathfrak{gl}_N)$}{Uq(glN)}}
Assume $q^2\neq 1$. For positive integers $N$ we let $U_N=U_q(\mathfrak{gl}_N)$ denote the unital associative $\K$-algebra with generators $E_i^\pm$, $K_j, K_j^{-1}$, $i\in\iv{1}{N-1}$, $j\in\iv{1}{N}$ and relations \cite[p.163]{KS}
\begin{gather*}
K_iK_i^{-1}=K_i^{-1}K_i=1, \quad [K_i,K_j]=0,\quad\forall i,j\in\iv{1}{N},\\
\begin{aligned}
K_iE_j^\pm K_i^{-1} &= q^{\pm(\delta_{ij}-\delta_{i,j+1})}E_j^{\pm}, \quad\forall i\in\iv{1}{N}, \forall j\in\iv{1}{N-1},\\
[E_i^+,E_j^-]&=\delta_{ij}\frac{K_iK_{i+1}^{-1}-K_{i+1}K_i^{-1}}{q-q^{-1}}, \quad\forall i,j\in\iv{1}{N-1},\\
[E_i^{\pm},E_j^{\pm}]&=0,\quad |i-j|>1,
\end{aligned}\\
(E_i^\pm)^2E_j^\pm -(q+q^{-1})E_i^\pm E_j^\pm E_i^\pm + E_j^\pm (E_i^\pm)^2 =0, \quad |i-j|=1.
\end{gather*}

\subsection{Quantum Weyl fields} \label{sec:qWskewfields}
If $n$ is a positive integer, the Weyl algebra $A_n(\K)$ is the
algebra of  differential operators on polynomial ring
$\mathcal{O}(\Bbbk^{n})$. This algebra is a simple Noetherian
domain which allows a skew field of fractions called a \emph{Weyl
field}. In this section we recall some well-known results
regarding the $q$-analogue of Weyl fields.

Recall the quantum polynomial algebra \eqref{eq:AqS}:
\begin{equation}
\K_{q,\left[\begin{smallmatrix}0&1\\-1&0\end{smallmatrix}\right]}[X_1,X_2]\simeq \mathcal{O}_q(\Bbbk^2).
\end{equation}

\begin{prp}\label{prp:normalform}
Let $n$ be a positive integer.
Let $S$ be a $2n\times 2n$ skew-symmetric integer matrix. Then there exist integers $k_1,\ldots,k_n$ and an algebra isomorphism
\begin{equation}
\Frac\big(\K_{q,S}[X_1,\ldots,X_{2n}]\big) \simeq
\Frac\big(\mathcal{O}_{q^{k_1}}(\Bbbk^2)\otimes\cdots\otimes
\mathcal{O}_{q^{k_n}}(\Bbbk^2)\big).
\end{equation}
\end{prp}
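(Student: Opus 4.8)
The plan is to put the skew-symmetric integer matrix $S$ into a normal form by an invertible integer change of the "exponent lattice" $\Z^{2n}$, and then translate this back into an isomorphism of the corresponding skew fields of fractions. More precisely, I would first recall (or prove) the multiplicative version of the standard structure theorem for skew-symmetric integer bilinear forms: there is a matrix $P\in GL_{2n}(\Z)$ such that $P^tSP$ is block-diagonal with $n$ blocks of the form $\left[\begin{smallmatrix}0&k_i\\-k_i&0\end{smallmatrix}\right]$ (some of the $k_i$ possibly zero). This is the symplectic/Smith-type normal form for alternating forms over a PID; I would either cite it or sketch the inductive argument (pick a pair of generators realizing the minimal positive value $|k_1|$ of the form, split off the rank-$2$ block it spans, and recurse on its orthogonal complement, using divisibility to keep everything integral).

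Next I would show that an integer change of variables of this kind is realized by an algebra isomorphism at the level of fraction fields. Given $P=(p_{ij})\in GL_{2n}(\Z)$, define new elements $Y_j=\prod_i X_i^{p_{ij}}$ inside $\Frac\big(\K_{q,S}[X_1,\ldots,X_{2n}]\big)$ (here the negative exponents make sense precisely because we have passed to the skew field, where each $X_i$ is invertible). A direct computation with the relations $X_iX_j=q^{s_{ij}}X_jX_i$ shows $Y_kY_l=q^{(P^tSP)_{kl}}Y_lY_k$, and since $P$ is invertible over $\Z$ the $Y_j$ generate a copy of $\K_{q,P^tSP}[Y_1,\ldots,Y_{2n}]$ whose skew field of fractions is all of $\Frac\big(\K_{q,S}[X_1,\ldots,X_{2n}]\big)$; this gives the isomorphism $\Frac\big(\K_{q,S}[X_1,\ldots,X_{2n}]\big)\simeq\Frac\big(\K_{q,P^tSP}[Y_1,\ldots,Y_{2n}]\big)$. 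Finally, for the block-diagonal matrix $P^tSP$ the quantum polynomial algebra visibly factors as a tensor product over $\K$ of the rank-$2$ pieces, each of which is $\K_{q,\left[\begin{smallmatrix}0&k_i\\-k_i&0\end{smallmatrix}\right]}[Y_{2i-1},Y_{2i}]\simeq\mathcal{O}_{q^{k_i}}(\Bbbk^2)$; passing to fraction fields (and noting $\mathcal{O}_{q^0}(\Bbbk^2)$ is commutative, hence harmless, matching a $\mathcal{O}_{q^0}$ factor) yields the claimed form, possibly after allowing some $k_i=0$ or absorbing such factors.

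The two points requiring care, which I expect to be the main obstacles, are: (i) making sure the change of variables is legitimate — one must work inside the skew field so that $X_i^{-1}$ exists, and one should check that the monomials $Y_j$ are genuinely a generating set and that the assignment extends to a well-defined algebra homomorphism (equivalently, that $P\mapsto (X\mapsto Y)$ respects composition, so $P^{-1}$ gives the inverse); (ii) the bookkeeping of $q$-powers: the identity $Y_kY_l=q^{(P^tSP)_{kl}}Y_lY_k$ follows from repeatedly commuting the monomial factors past each other, and one should present this computation cleanly, e.g. by first checking it for $X_i^{a}X_j^{b}=q^{ab\,s_{ij}}X_j^bX_i^a$ and then extending multiplicatively. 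Everything else — the normal form theorem for alternating integer forms and the tensor-factorization of a block-diagonal quantum polynomial algebra — is standard and can be quoted or dispatched in a line.
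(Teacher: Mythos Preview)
Your proposal is correct and follows essentially the same route as the paper's proof: both invoke the normal form $U^tSU=\bigoplus_i\left[\begin{smallmatrix}0&k_i\\-k_i&0\end{smallmatrix}\right]$ for skew-symmetric integer matrices (the paper cites Newman's \emph{Integral Matrices}), realize the change of basis by monomial substitutions $X_i'=\prod_j X_j^{u_{ji}}$, and then read off the tensor factorization from the block-diagonal relations. The only cosmetic difference is that the paper first passes to the Laurent polynomial algebra $\K_{q,S}[X_1^{\pm1},\ldots,X_{2n}^{\pm1}]$ (so that the monomial change of generators is an honest automorphism of a concrete algebra rather than an argument inside the skew field), which slightly streamlines your point~(i) about legitimacy of the inverse change of variables.
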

\begin{proof}
Similar to the proof of \cite[Theorem 4.8]{H}, but we provide
details for convenience. Denote
$\K_q[x,y]=\mathcal{O}_{q}(\Bbbk^2)$. It is enough to show that
the corresponding Laurent analogs, $\K_{q,S}[X_1^{\pm
1},\ldots,X_{2n}^{\pm 1}]$ and
$$\K_{q^{k_1}}[x^{\pm 1},y^{\pm 1}]\otimes\cdots
\otimes \K_{q^{k_n}}[x^{\pm 1},y^{\pm 1}]$$ are isomorphic.
Consider a change of generators
\[X_i':=X_1^{u_{1i}}\cdots X_{2n}^{u_{2n,i}},\qquad i=1,\ldots,2n,\]
where $U=(u_{ij})$ is an invertible $2n\times 2n$ integer matrix.
The new commutation relations are
\begin{equation}\label{eq:Xprimrel}
X_i'X_j'=q^{s'_{ij}}X_j'X_i',\qquad i,j=1,\ldots,2n,
\end{equation}
where $s'_{ij}$ are the entries of the matrix $S':=U^tSU$. By
Theorem IV.1 in \cite{N} there is an invertible $2n\times 2n$
integer matrix $U$ such that $U^tSU$ is block diagonal with
skew-symmetric $2\times 2$ blocks on the diagonal. That is,
\begin{equation}\label{eq:UtsU}
U^tSU=\bigoplus_{i=1}^n \begin{bmatrix}0& k_i\\ -k_i& 0\end{bmatrix}
\end{equation}
for some $k_i\in\Z$. Put $x_i=X_{2i}'$ and $y_i=X_{2i-1}'$ for
$i=1,\ldots,n$. Then \eqref{eq:Xprimrel} and \eqref{eq:UtsU} imply
that $y_ix_i=q^{k_i}x_iy_i$  for all
$i$ and $[x_i,x_j]=[x_i,y_j]=[y_i,y_j]=0$ for all $i\neq j$. Thus
there is a $\K$-algebra isomorphism
\begin{align*}
\K_{q,S}[X_1^{\pm 1},\ldots,X_{2n}^{\pm 1}]&\overset{\sim}{\longrightarrow} \K_{q^{k_1}}[x^{\pm 1},y^{\pm 1}]\otimes\cdots\otimes \K_{q^{k_n}}[x^{\pm 1},y^{\pm 1}],\\
\intertext{determined by}
x_i&\longmapsto 1^{\otimes i-1} \otimes x \otimes 1^{\otimes n-i},\\
y_i&\longmapsto 1^{\otimes i-1} \otimes y \otimes 1^{\otimes n-i}.
\end{align*}
\end{proof}

Let $\bar q=(q_1,\ldots,q_n)\in(\K\backslash\{0\})^n$ and
$\La=(\la_{ij})$ be an $n\times n$ matrix with $\la_{ij}\in\K,
\la_{ij}\la_{ji}=\la_{ii}=1$ for all $i,j$. The
\emph{multiparameter quantized Weyl algebra} $A_n^{\bar q,\La}(\K)$
 was introduced by Maltsiniotis \cite{Mal} (see also
\cite{J}). This algebra can be viewed as algebra of $q$-difference
operators on quantum affine space $\mathcal{O}_q(\Bbbk^{n})$. It
is defined as the associative unital $\K$-algebra generated by
$x_1,\ldots,x_n$ and $y_1,\ldots,y_n$ with defining relations
\begin{subequations}\label{eq:multiqweyl}
\begin{align}
y_iy_j&=\la_{ij}y_jy_i,\quad \forall i,j\\
x_ix_j&=q_i\la_{ij}x_jx_i,\quad i<j\\
x_iy_j&=\la_{ji} y_jx_i,\quad i<j\\
x_iy_j&=q_j\la_{ji}y_jx_i,\quad i>j\\
x_iy_i-q_iy_ix_i&=1+\sum_{1\le k\le i-1} (q_k-1)y_kx_k.
\end{align}
\end{subequations}

The following proposition is well-known (see for example \cite{BG} and references therein), but we provide a proof containing the explicit isomorphisms which are not always given in the literature.
\begin{prp}\label{prp:qWeyl_iso}
Let $n$ be a positive integer, $\K$ a field, and $(q_1,\ldots,q_n)\in(\K\setminus\{0,1\})^n$. Then the skew fields of fractions of the following three algebras are isomorphic:
\begin{enumerate}[{\rm (i)}]
\item The tensor product of quantum Weyl algebras
\begin{equation}\label{eq:qweylk1kn}
A_1^{q_1}(\K)\otimes_\K \cdots \otimes_\K A_1^{q_n}(\K);
\end{equation}
 \item The tensor product of quantum planes
\begin{equation}\label{eq:qplanes}
\mathcal{O}_{q_1}(\Bbbk^2)\otimes_\K\cdots\otimes_\K
\mathcal{O}_{q_n}(\Bbbk^2);
\end{equation}
 \item The multiparameter quantized Weyl algebra
\begin{equation}\label{eq:multiqweyl2}
 A_n^{\bar q,\La}(\K)
\end{equation}
with parameters $\bar q = (q_1,\ldots,q_n)$, and $\La=(\la_{ij})$, $\la_{ij}=1$ for all $i,j=1,\ldots,n$.
\end{enumerate}
\end{prp}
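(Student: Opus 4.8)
The plan is to establish $\Frac(\text{(i)})\simeq\Frac(\text{(ii)})\simeq\Frac(\text{(iii)})$ by explicit localizations followed by monomial changes of variables, recording the isomorphisms as we go.

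\emph{(i)~$\Leftrightarrow$~(ii).} The quantum Weyl algebra $A_1^{q_i}(\K)=\K\langle x,y\mid yx-q_ix y=1\rangle$ is the Ore extension $\K[x][y;\sigma,\delta]$ with $\sigma(x)=q_ix$ and $\delta(x)=1$; since $\sigma$ and $\delta$ extend to $\K[x^{\pm1}]$, the set $\{x^k:k\ge0\}$ is an Ore set and $A_1^{q_i}(\K)[x^{-1}]=\K[x^{\pm1}][y;\sigma,\delta]$ is a legitimate localization. In it the element $\tilde y:=y+\tfrac{1}{q_i-1}x^{-1}$ satisfies $\tilde yx=q_ix\tilde y$, and $x,x^{-1},\tilde y$ generate $A_1^{q_i}(\K)[x^{-1}]$ with this as sole defining relation (use the PBW basis of $A_1^{q_i}(\K)$); hence $A_1^{q_i}(\K)[x^{-1}]\simeq\mathcal{O}_{q_i}(\K^2)[x^{-1}]$ as $\K$-algebras and $\Frac A_1^{q_i}(\K)\simeq\Frac\mathcal{O}_{q_i}(\K^2)$. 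Tensoring over $\K$, and using that a tensor product of Ore localizations is the Ore localization of the tensor product, gives (i)~$\Leftrightarrow$~(ii).

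\emph{(ii)~$\Leftrightarrow$~(iii).} Write $A:=A_n^{\bar q,\La}(\K)$ with $\la_{ij}=1$; it is a Noetherian domain (an iterated Ore extension), $\{x_1,\dots,x_n\}$ generates an Ore set, and passing to $A[x_1^{-1},\dots,x_n^{-1}]$ preserves $\Frac A$. Put $w_1=1$ and $w_{i+1}=1+\sum_{k=1}^i(q_k-1)y_kx_k$ for $1\le i\le n$, so relation (e) reads $x_iy_i-q_iy_ix_i=w_i$. A computation with the defining relations shows that the elements $\zeta_k:=y_kx_k$ commute pairwise and that conjugation by $x_j$ in the localization fixes $\zeta_k$ for $k<j$, multiplies $\zeta_k$ by $q_j$ for $k>j$, and sends $\zeta_j$ to $q_j\zeta_j+w_j$. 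Consequently the $w_i$ commute with one another and
\[
w_{i+1}x_j=q_j^{-1}x_jw_{i+1}\ \ (j\le i),\qquad
w_{i+1}x_j=x_jw_{i+1}\ \ (j>i),\qquad
x_ix_j=q_ix_jx_i\ \ (i<j).
\]
Since $y_i=\tfrac{1}{q_i-1}(w_{i+1}-w_i)x_i^{-1}$, the localization $A[x_1^{-1},\dots,x_n^{-1}]$ is generated by $x_1^{\pm1},\dots,x_n^{\pm1}$ and $w_2,\dots,w_{n+1}$; adjoining the inverses of the (normal) elements $w_{i+1}$, which does not affect $\Frac$, produces a quantum torus on $2n$ generators. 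The substitution
\[
x_i':=x_iw_i^{-1},\qquad y_i':=w_{i+1}w_i^{-1}\qquad(1\le i\le n)
\]
turns the displayed relations into $[x_i',x_j']=[y_i',y_j']=[x_i',y_j']=0$ for $i\ne j$ and $x_i'y_i'=q_iy_i'x_i'$. Hence $A[x_1^{-1},\dots,x_n^{-1},w_2^{-1},\dots,w_{n+1}^{-1}]$ is isomorphic to the localization of $\mathcal{O}_{q_1}(\K^2)\otimes_\K\cdots\otimes_\K\mathcal{O}_{q_n}(\K^2)$ at all of its canonical generators (using $\mathcal{O}_q(\K^2)\simeq\mathcal{O}_{q^{-1}}(\K^2)$), and taking $\Frac$ gives (ii)~$\Leftrightarrow$~(iii).

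The delicate point is the middle step: showing that the inhomogeneous elements $w_{i+1}$ become normal after localizing at the $x_i$. The crux is the identity $x_jw_{i+1}x_j^{-1}=q_jw_{i+1}$ for $j\le i$, which follows by splitting the sum defining $w_{i+1}$ according to $k<j$, $k=j$, $k>j$ and telescoping, using that $x_j$ fixes $w_j$; everything else reduces to a careful reading of relations (a)–(e). One must also check that $x_i',y_i'$ generate (so no information is lost) and satisfy no relations beyond the quantum-torus ones, the latter following from the PBW basis of $A$. The statement is essentially known (see \cite{BG}, \cite{Mal}, \cite{J}); the point here is to record the explicit substitutions $\tilde y$ and $x_i',y_i'$.
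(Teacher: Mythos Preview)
Your proof is correct and follows essentially the same approach as the paper's. For (i)\,$\Leftrightarrow$\,(ii) both arguments shift $y$ by a multiple of $x^{-1}$ after localizing at $x$ (the paper writes the inverse map $y\mapsto (qx-x)^{-1}(y-1)$); for (ii)\,$\Leftrightarrow$\,(iii) both use Jordan's normal elements $z_i=1+\sum_{k\le i}(q_k-1)y_kx_k$ (your $w_{i+1}$) and their consecutive ratios $z_iz_{i-1}^{-1}$ (your $y_i'$), the only difference being that the paper pairs these ratios with the original $y_i$ (which already commute when $\la_{ij}=1$) while you pair them with $x_iw_i^{-1}$ --- two choices of generators in the same quantum torus.
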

\begin{proof}
That \eqref{eq:qweylk1kn} and \eqref{eq:qplanes} have isomorphic skew
fields of fractions follows from the fact that there is an isomorphism
\begin{align*}
\K\langle x^{\pm 1},y\mid yx-qxy=1\rangle &\longrightarrow
\K\langle x^{\pm 1},y\mid yx=qxy\rangle\\
x &\longmapsto x\\
y &\longmapsto (qx-x)^{-1}(y-1).
\end{align*}
This is straightforward to check directly.
(One can understand this isomorphism as coming from the realization of $y$ in the left hand side as the $q$-difference operator $f(x)\mapsto \frac{f(qx)-f(x)}{qx-x}$ for $f(x)\in\K[x,x^{-1}]$ while in the right hand side $y$ can be realized as the $q$-shift operator $f(x)\mapsto f(qx)$.)

Concerning the multiparameter quantized Weyl algebra, the proof
can be derived from \cite{J}. We recall from \cite{J} that the
elements $z_i\in A_n^{\bar q,\La}(\K)$ defined by
\begin{equation}\label{eq:zidef}
z_i:=[x_i,y_i]=1+\sum_{1\le k\le i} (q_k-1)y_kx_k,\quad i=1,\ldots,n
\end{equation}
satisfy
\begin{subequations}\label{eq:ziyirels}
\begin{align}
z_iz_j &=z_jz_i,\quad\forall i,j\\
z_jy_i &= \begin{cases}y_iz_j,&j<i,\\ q_iy_iz_j,& j\ge i.\end{cases}
\end{align}
\end{subequations}
In $\Frac\big(A_n^{\bar q,\La}(\K)\big)$, putting
\begin{equation}\label{eq:zj'def}
z_j':=z_j\cdot z_{j-1}^{-1},\quad\forall j=1,\ldots,n,
\end{equation}
where $z_0:=1$, relations \eqref{eq:ziyirels} imply that
\begin{subequations}\label{eq:zi'yirels}
\begin{align}
z_i'z_j'&=z_j'z_i'\\
z_j'y_i&= \begin{cases}y_iz_j',&i\neq j,\\ q_i y_i z_j',&i=j.\end{cases}
\end{align}
Since $\la_{ij}=1$ for all $i,j$, \eqref{eq:multiqweyl} implies
\begin{equation}
y_iy_j=y_jy_i.
\end{equation}
\end{subequations}
Relations \eqref{eq:zi'yirels} prove that, there is a $\K$-algebra homomorphism
\begin{align*}
\Frac\big(\mathcal{O}_{q_1}(\Bbbk^2)\otimes_\K\cdots\otimes_\K
\mathcal{O}_{q_n}(\Bbbk^2)\big)&\longrightarrow
\Frac\big(A_n^{\bar q,\La}(\K)\big),\\
1^{\otimes i-1}\otimes x\otimes 1^{\otimes n-i}  &\longmapsto y_i,\\
1^{\otimes i-1}\otimes y\otimes 1^{\otimes n-i}  &\longmapsto
z_i',
\end{align*}
$\bar q=(q_1, \ldots, q_n)$. It is injective since the domain is a
skew field and surjective since in $\Frac\big(A_n^{\bar
q,\La}(\K)\big)$ we have by \eqref{eq:zidef},\eqref{eq:zj'def}
\begin{equation}
x_i=\frac{y_i^{-1}(z_i-z_{i-1})}{q_i-1}=
\frac{y_i^{-1}\Big(\prod_{j=1}^iz_j'-\prod_{j=1}^{i-1}z_j'\Big)}{q_i-1},\quad\forall i=1,\ldots,n,
\end{equation}
where $z_0:=1$.
\end{proof}

\begin{rem}
In \cite[Thm~3.5]{AD} it is proved that if $q_i, \la_{ij}$ ($i,j=1,\ldots,n$) are powers of some fixed non-root of unity $q\in\K\setminus\{0\}$, then $\Frac\big(A_n^{\bar q,\La}(\K)\big)$ is isomorphic to a quantum Weyl field $\Frac\big(\K_{q,S}[X_1,\ldots,X_{2n}]\big)$ for some $2n\times 2n$ skew-symmetric integer matrix $S$
(see also \cite[Sec~5]{P}). Combining this with Proposition \ref{prp:normalform} we get the following result.
\begin{cor}\label{cor:multiqweyl}
If all parameters $q_i, \la_{ij}$ ($i,j=1,\ldots,n$) are powers of some fixed non-root of unity $q\in\K\setminus\{0\}$, then there exists a tuple $(k_1,\ldots,k_n)\in\Z^n$ such that
\begin{equation}
\Frac\big(A_n^{\bar q,\La}(\K)\big)
\simeq\Frac\big(\mathcal{O}_{q^{k_1}}(\Bbbk^2)\otimes_\K\cdots\otimes_\K\mathcal{O}_{q^{k_n}}(\Bbbk^2)\big).
\end{equation}
\end{cor}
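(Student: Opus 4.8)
The plan is to deduce the statement directly from the two facts assembled in the remark immediately preceding it, so the argument is short. First I would invoke \cite[Thm~3.5]{AD}: the hypothesis that every $q_i$ and every $\la_{ij}$ ($i,j=1,\ldots,n$) is a power of the fixed non-root of unity $q$ is exactly what that theorem requires, and it produces a $2n\times 2n$ skew-symmetric integer matrix $S$ together with a $\K$-algebra isomorphism
\[
\Frac\big(A_n^{\bar q,\La}(\K)\big)\simeq\Frac\big(\K_{q,S}[X_1,\ldots,X_{2n}]\big).
\]

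Next I would feed this matrix $S$ into Proposition \ref{prp:normalform}, whose statement imposes no condition on $q$ beyond being nonzero; it returns integers $k_1,\ldots,k_n$ and a $\K$-algebra isomorphism
\[
\Frac\big(\K_{q,S}[X_1,\ldots,X_{2n}]\big)\simeq\Frac\big(\mathcal{O}_{q^{k_1}}(\Bbbk^2)\otimes_\K\cdots\otimes_\K\mathcal{O}_{q^{k_n}}(\Bbbk^2)\big).
\]
Composing the two isomorphisms yields the corollary with this tuple $(k_1,\ldots,k_n)$. There is essentially no obstacle here: the content is purely the bookkeeping of matching each algebra with the normal-form lemma that governs it, and the powers of $q$ never need to be computed explicitly because both \cite[Thm~3.5]{AD} and Proposition \ref{prp:normalform} already operate with arbitrary skew-symmetric integer matrices.

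If one preferred an argument not relying on \cite{AD}, a self-contained route is available along the lines of the proof of Proposition \ref{prp:qWeyl_iso}: localize $A_n^{\bar q,\La}(\K)$ at the pairwise commuting elements $z_i=[x_i,y_i]$ of \eqref{eq:zidef}, pass to the quotients $z_j'=z_jz_{j-1}^{-1}$, and observe that the relations \eqref{eq:zi'yirels} together with $y_iy_j=\la_{ij}y_jy_i$---where now all $\la_{ij}$ are powers of $q$---show that $y_1,\ldots,y_n,z_1',\ldots,z_n'$ generate, after inverting all of them, a quantum Laurent polynomial algebra $\K_{q,S}[X_1^{\pm 1},\ldots,X_{2n}^{\pm 1}]$ for a suitable skew-symmetric integer matrix $S$; then the Laurent version of Proposition \ref{prp:normalform} (which is the form actually established in its proof) finishes the job. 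The one point demanding care in that alternative is that $x_i$ must be recoverable from the new generators, which uses $q_i\neq 1$---cf. the displayed formula for $x_i$ in the proof of Proposition \ref{prp:qWeyl_iso}---so the primary route via \cite{AD} is preferable since it sidesteps that restriction.
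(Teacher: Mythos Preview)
Your proof is correct and takes essentially the same approach as the paper: invoke \cite[Thm~3.5]{AD} to get $\Frac\big(A_n^{\bar q,\La}(\K)\big)\simeq\Frac\big(\K_{q,S}[X_1,\ldots,X_{2n}]\big)$ for some skew-symmetric integer matrix $S$, then apply Proposition~\ref{prp:normalform}. Your alternative self-contained route via the elements $z_i'$ is a nice bonus not present in the paper.
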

In general, however, the integers $k_i$ occuring in Corollary
\ref{cor:multiqweyl} require some work to determine.
\end{rem}

\section{The \texorpdfstring{$q$}{q}-difference Noether problem for \texorpdfstring{$S_n$}{Sn}}\label{section-q-noether}
Let $n$ be a positive integer.
Throughout this section, $\K$ denotes a field of characteristic zero,
and $q$ is any nonzero element of $\K$.
Let
$$\K_q[\bar x,\bar y]=\K_q[x_1,y_1]\otimes_\K
\K_q[x_2,y_2]\otimes_\K\cdots\otimes_\K \K_q[x_n,y_n]\simeq
\mathcal{O}_q(\Bbbk^{2n}),$$
 $\K_q(\bar x,\bar y)$ be the skew field of fractions of $\K_q[\bar
x,\bar y]$ and
$\K_q(\bar x,\bar y)^{S_n}$ the subalgebra of $S_n$ invariants.

\subsection{Generators and relations for the skew field of invariants}
In this section we provide a set of generators and relations for the algebra of
invariants $\K_q(\bar x,\bar y)^{S_n}$. Let
\begin{equation}
C_n^q:=\K(x_1,\ldots,x_n)\langle y_1,\ldots,y_n\rangle
\end{equation}
denote the $\K(x_1,\ldots,x_n)$-subring of $\K_q(\bar x,\bar y)$ generated by $\{y_1,\ldots,y_n\}$.
Note that $C_n^q$ is an $S_n$-invariant subspace of $\K_q(\bar x,\bar y)$ and that $\Frac(C_n^q)=\K_q(\bar x,\bar y)$.
Inspired by \cite{Mattuck1968}, we observe that the Vandermonde matrix
\begin{equation}\label{eq:Mvandermonde}
\begin{bmatrix}
1 & x_1 & x_1 & \cdots & x_1^{n-1} \\
1 & x_2 & x_2 & \cdots & x_2^{n-1} \\
  &\vdots  &       &  \ddots      & \vdots \\
1 & x_n & x_n & \cdots & x_n^{n-1}
\end{bmatrix}
\end{equation}
is invertible and thus the system of equations
\begin{equation}\label{eq:tsystem}
t_1+ x_i t_2 + x_i^2 t_3 + \cdots + x_i^{n-1} t_n = y_i,\qquad i=1,\ldots,n
\end{equation}
has a unique solution $(t_1,\ldots,t_n) \in (C_n^q)^n$.
Since the system \eqref{eq:tsystem} is $S_n$-invariant,
\begin{equation}
t_i\in (C_n^q)^{S_n},\qquad\forall i=1,\ldots,n.
\end{equation}
The explicit inverse of the matrix \eqref{eq:Mvandermonde} is well-known and implies the following description of the $t_i$.
If we introduce the generating function $P(X)\in C_n^q[X]$ by
\begin{equation}\label{eq:PXa}
P(X)=\sum_{j=1}^n t_j X^{j-1},
\end{equation}
then
\begin{equation}\label{eq:PXb}
P(X)=\sum_{j=1}^n \left(\prod_{k\in\{1,\ldots,n\}\setminus\{j\}} \frac{X-x_k}{x_j-x_k}\right)y_j.
\end{equation}
Explicitly,
\begin{equation}\label{eq:PX}
t_i = \sum_{j=1}^n \left(\frac{(-1)^{n-i}e_{n-i}'(x_1,\ldots,\widehat{x_j},\ldots,x_n)}{\prod_{k\in\{1,\ldots,n\}\setminus\{j\}}(x_j-x_k)}\right)y_j
\end{equation}
where $e_i'$ is the degree $i$ elementary symmetric polynomial in $n-1$ variables, $e_0':=1$, and $\widehat{x_j}$ means that variable should be omitted.

Since the $t_i$ and $y_i$ can be expressed through each other via
 \eqref{eq:tsystem} and \eqref{eq:PX} we have
\begin{equation} \label{eq:tgenerates}
  C_n^q=\K(x_1,\ldots,x_n)\langle t_1,\ldots,t_n \rangle,
\end{equation}
i.e. $C_n^q$ is generated as a $\K(x_1,\ldots,x_n)$-ring by $t_1,\ldots,t_n$.

\begin{prp}\label{prp:tcommutes}  For any $i,j\in\{1,\ldots,n\}$ we have
\begin{equation}
[t_i,t_j]=0.
\end{equation}
\end{prp}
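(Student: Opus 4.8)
The plan is to work in the quantum affine space $\K_q[\bar x,\bar y]$ and compute the commutator $[t_i,t_j]$ directly from the defining formula \eqref{eq:PX}, using that the $x_k$'s commute among themselves and that $y_k x_\ell = q^{\delta_{k\ell}} x_\ell y_k$, which forces $y_k$ to commute with any rational expression in $x_1,\dots,x_n$ that does not involve $x_k$, while $y_k x_k^m = q^m x_k^m y_k$. First I would fix notation: write $t_i = \sum_j c_{ij}(x) y_j$ where $c_{ij}(x) = \frac{(-1)^{n-i}e_{n-i}'(x_1,\ldots,\widehat{x_j},\ldots,x_n)}{\prod_{k\neq j}(x_j-x_k)}$ is a rational function not involving... no --- crucially, $c_{ij}(x)$ \emph{does} involve $x_j$. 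So I would instead prefer the generating-function form \eqref{eq:PXb}: it suffices to prove $[P(X),P(Y)]=0$ as an identity in $C_n^q[X,Y]$ (two commuting indeterminates $X,Y$), since the $t_i,t_j$ are the coefficients.

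The key computation: with $L_j(X) := \prod_{k\neq j}\frac{X-x_k}{x_j-x_k}$, we have $P(X)P(Y) = \sum_{j,\ell} L_j(X)\, y_j\, L_\ell(Y)\, y_\ell$. Now $y_j$ must be moved past $L_\ell(Y)$. Since $L_\ell(Y)$ is a rational function in $Y$ and in $x_k$ for $k\neq \ell$, and also in $x_\ell$ through the factor $\prod_{k\neq\ell}(x_\ell-x_k)^{-1}$, moving $y_j$ past $L_\ell(Y)$ only affects the dependence on $x_j$. I would compute $y_j L_\ell(Y)$ explicitly: $y_j$ acts on the $x_j$-dependence of $L_\ell(Y)$ by the substitution $x_j \mapsto q x_j$ (the automorphism induced by conjugation). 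So $y_j L_\ell(Y) = L_\ell^{(j)}(Y)\, y_j$ where $L_\ell^{(j)}$ is $L_\ell$ with $x_j$ replaced by $qx_j$. Then $P(X)P(Y) = \sum_{j,\ell} L_j(X) L_\ell^{(j)}(Y)\, y_j y_\ell$ and similarly $P(Y)P(X) = \sum_{j,\ell} L_j(Y) L_\ell^{(j)}(X)\, y_j y_\ell$. Comparing coefficients of $y_j y_\ell$ (which are linearly independent over $\K(x)$ after reordering --- one must be careful, but $\{y_j y_\ell\}$ with $j\le \ell$ together with $y_\ell y_j = y_j y_\ell$ span a free module), the claim reduces to the \emph{commutative} polynomial identity
\begin{equation}
\sum_{j,\ell} L_j(X) L_\ell^{(j)}(Y)\, T_{j\ell} = \sum_{j,\ell} L_j(Y) L_\ell^{(j)}(X)\, T_{j\ell}
\end{equation}
where $T_{j\ell}=T_{\ell j}$ are commuting symbols, i.e. symmetrizing in $j,\ell$ we need $L_j(X)L_\ell^{(j)}(Y)+L_\ell(X)L_j^{(\ell)}(Y)$ symmetric under $X\leftrightarrow Y$, for each unordered pair $\{j,\ell\}$ (and $j=\ell$ separately).

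I expect the main obstacle to be this last polynomial identity, because the "shifted" Lagrange interpolators $L_\ell^{(j)}$ no longer satisfy the clean property $L_\ell^{(j)}(x_m)=\delta_{\ell m}$; the shift $x_j\mapsto qx_j$ breaks the interpolation structure, so one cannot simply evaluate at nodes. My approach for this step would be to use the characterization of $P(X)$ itself: $P(X)$ is the unique element of $C_n^q[X]$ of $X$-degree $<n$ with $P(x_i)=y_i$ for all $i$ (which is exactly the defining system \eqref{eq:tsystem}). So rather than fighting the Lagrange form, I would argue more structurally: from $P(x_i)=y_i$ and the commutation relation $y_i x_i = q x_i y_i$, deduce relations among the $t_j$ and $x_i$, and then show $[t_i,t_j]=0$ by exhibiting that $Q(X,Y):=[P(X),P(Y)]$ is a polynomial in $X,Y$ of bidegree $<n$ in each variable which vanishes on the $n^2$ points $(x_a,x_b)$ --- indeed $[P(x_a),P(x_b)]=[y_a,y_b]=0$ --- hence $Q\equiv 0$ by the two-variable Vandermonde (the $x_a$ are distinct as elements of the field $\K(x_1,\dots,x_n)$, and a polynomial of degree $<n$ in each variable vanishing at all $(x_a,x_b)$ is zero, the coefficients lying in the field $C_n^q$ which need not be commutative but the vanishing argument only uses that the Vandermonde matrix over $\K(x)\subseteq$ center-ish is invertible --- one applies the inverse Vandermonde from both sides). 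Extracting the $X^{i-1}Y^{j-1}$ coefficient of $Q$ gives $[t_i,t_j]=0$. The one delicate point I would check carefully is that the coefficients of $Q(X,Y)$, which lie in $C_n^q$, can be recovered from the values $Q(x_a,x_b)$ via the scalar (hence central-enough) inverse Vandermonde matrix with entries in $\K(x_1,\dots,x_n)$; since $\K(x_1,\dots,x_n)$ commutes with everything needed in the relevant sub-computation after clearing denominators, this goes through.
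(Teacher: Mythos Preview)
Your first approach --- reducing $[P(X),P(Y)]=0$ to the symmetry in $X,Y$ of $L_j(X)L_\ell^{(j)}(Y)+L_\ell(X)L_j^{(\ell)}(Y)$ for each unordered pair $\{j,\ell\}$ --- is exactly what the paper does. The paper further uses the $S_n$-equivariance $w(Q_j(X))=Q_{w(j)}(X)$ (where $Q_j(X)=L_j(X)y_j$) to reduce to the two representative cases $(j,\ell)=(1,1)$ and $(1,2)$, and then computes those directly: the $(1,1)$ case is immediate since $L_1(X)L_1^{(1)}(Y)=\prod_{k\ge 2}\frac{(X-x_k)(Y-x_k)}{(x_1-x_k)(qx_1-x_k)}$ is visibly symmetric, and the $(1,2)$ case is a short rational-function calculation. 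So the polynomial identity you feared is in fact tractable and short.

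Your second, preferred approach has a genuine gap. The elements $x_a$ are \emph{not} central in $C_n^q$: since each $t_i$ is a $\K(x)$-linear combination of $y_1,\ldots,y_n$ and $y_ax_a=qx_ay_a$, one has $[t_i,x_a]\neq 0$ in general. Consequently ``evaluation at $(x_a,x_b)$'' is not a ring homomorphism on $C_n^q[X,Y]$, and the identity you implicitly need,
\[
\sum_{i,j} x_a^{i-1}x_b^{j-1}[t_i,t_j]\;=\;[y_a,y_b],
\]
fails: the left side equals $\sum_{i,j}x_a^{i-1}x_b^{j-1}t_it_j-\sum_{i,j}x_a^{i-1}x_b^{j-1}t_jt_i$, whereas $y_ay_b=\big(\sum_i x_a^{i-1}t_i\big)\big(\sum_j x_b^{j-1}t_j\big)=\sum_{i,j}x_a^{i-1}t_ix_b^{j-1}t_j$, and these differ by $\sum_{i,j}x_a^{i-1}[x_b^{j-1},t_i]t_j$, which is nonzero. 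So from $[y_a,y_b]=0$ you cannot conclude that the polynomial $Q(X,Y)=\sum_{i,j}[t_i,t_j]X^{i-1}Y^{j-1}$ vanishes at the nodes, and the Vandermonde argument never gets off the ground. Your closing remark about the inverse Vandermonde being scalar addresses the wrong step: the problem is not recovering coefficients from values, it is that the values are not what you claim.
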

The proof of Proposition~\ref{prp:tcommutes} will be given in the Appendix.

We need the following preliminary observation of the commutation relations between $t_i$ and
rational functions of $x_1,\ldots,x_n$.
\begin{lem}\label{lem:tia}
For any $a\in \K(x_1,\ldots,x_n)$ and any $i\in\{1,\ldots,n\}$ there
are $a_{i1},\ldots,a_{in}\in\K(x_1,\ldots,x_n)$ with
\begin{equation}\label{eq:tia}
t_i a = a_{i1} t_1+\cdots + a_{in}t_n.
\end{equation}
\end{lem}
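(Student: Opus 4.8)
The plan is to prove Lemma~\ref{lem:tia} by establishing the stronger statement that the $\K(x_1,\ldots,x_n)$-subspace
\[
V:=\K(x_1,\ldots,x_n)t_1+\cdots+\K(x_1,\ldots,x_n)t_n\subseteq C_n^q
\]
is stable under left multiplication by elements of $\K(x_1,\ldots,x_n)$; equivalently $V\cdot\K(x_1,\ldots,x_n)\subseteq V$ after noting left and right multiplication are related by the algebra automorphisms. Since $t_i$ are $\K$-linear combinations of $y_1,\ldots,y_n$ with coefficients in $\K(x_1,\ldots,x_n)$ by \eqref{eq:PX}, and conversely each $y_i\in V$ by \eqref{eq:tsystem}, we have $V=\K(x_1,\ldots,x_n)y_1+\cdots+\K(x_1,\ldots,x_n)y_n$. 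So it suffices to show this latter space is a left $\K(x_1,\ldots,x_n)$-module in the natural way and closed under $a\mapsto y_i a$.

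First I would reduce to generators: it is enough to verify \eqref{eq:tia} for $a$ ranging over a generating set of the field $\K(x_1,\ldots,x_n)$, because the set of $a$ for which $t_i a\in V$ for all $i$ is closed under addition, scalar multiplication, products (using that $t_i(ab)=(t_ia)b$ and that $V\cdot\K(x_1,\ldots,x_n)\subseteq V$ follows once we know $t_j b\in V$ for all $j$), and inverses (if $t_i a=\sum_j a_{ij}t_j$ and $a$ is invertible, then $t_i=\sum_j a_{ij}t_j a^{-1}$, and inverting the matrix $(a_{ij})$ over the field $\K(x_1,\ldots,x_n)$ — which is invertible since the $t_j$ are linearly independent over that field, as the $y_j$ are — expresses each $t_j a^{-1}$ in $V$). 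Thus it remains to check the relation for $a=x_k$, $k=1,\ldots,n$. Here I would use the defining relations of $\mathcal{O}_q(\Bbbk^{2n})$: $y_i x_k = q^{\delta_{ik}} x_k y_i$, so $y_i x_k = q^{\delta_{ik}} x_k y_i \in \K(x_1,\ldots,x_n)y_i\subseteq V$. Transporting through the change of basis \eqref{eq:PX}, one gets $t_i x_k = \sum_{j,l} c_{ij}\, q^{\delta_{jk}}\, x_k\, (c^{-1})_{jl}\, t_l$ where $c$ is the matrix expressing the $t$'s in terms of the $y$'s; this is manifestly of the required form with coefficients in $\K(x_1,\ldots,x_n)$.

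The main obstacle, such as it is, is bookkeeping: making sure the field $\K(x_1,\ldots,x_n)$ really is generated as a field by $x_1,\ldots,x_n$ over $\K$ (immediate) and that the closure properties under inverse are correctly justified via invertibility of the transition matrix $(a_{ij})$ — which in turn rests on the $\K(x_1,\ldots,x_n)$-linear independence of $t_1,\ldots,t_n$, itself a consequence of the invertibility of the Vandermonde matrix \eqref{eq:Mvandermonde} together with the fact that $y_1,\ldots,y_n$ are $\K(x_1,\ldots,x_n)$-linearly independent inside $\K_q(\bar x,\bar y)$ (a standard PBW-type fact for the quantum affine space). An alternative, even more direct route avoiding the closure argument is: for $a\in\K(x_1,\ldots,x_n)$ arbitrary, write $a$ as a ratio $a=p/r$ with $p,r\in\K[x_1,\ldots,x_n]$, use the monomial relations $y_i x^\alpha = q^{\langle e_i,\alpha\rangle} x^\alpha y_i$ to commute $y_i$ past polynomials, and then invert; feeding this through \eqref{eq:PX} again yields \eqref{eq:tia} explicitly. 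Either way the lemma follows without difficulty, and the explicit coefficients $a_{ij}$ can be read off from \eqref{eq:PX} and the $q$-commutation relations if needed later.
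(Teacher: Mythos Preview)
Your proposal is correct and follows essentially the same approach as the paper: express $t_i$ as a $\K(x_1,\ldots,x_n)$-linear combination of the $y_j$, push $a$ past each $y_j$, and convert back via \eqref{eq:tsystem}. The paper's version is more streamlined in that it uses directly, for an arbitrary $a\in\K(x_1,\ldots,x_n)$, the relation $y_ja=\sigma_j(a)y_j$ with $\sigma_j$ the $q$-shift automorphism of the rational function field---so no reduction to generators, no closure-under-inverses argument, and no appeal to invertibility of the transition matrix is needed.
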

\begin{proof} From \eqref{eq:PX} we know that
\[t_i=b_{i1}y_1+\cdots+b_{in}y_n\]
for some $b_{ij}\in\K(x_1,\ldots,x_n)$. Using the commutation relation $y_jx_k=q^{\delta_{jk}}x_ky_j$
we obtain that
\[t_ia=c_{i1}y_1+\cdots+c_{in}y_n\]
for some $c_{ij}\in\K(x_1,\ldots,x_n)$. Now use \eqref{eq:tsystem} to
obtain \eqref{eq:tia} for some $a_{ij}$.
\end{proof}


Combining \eqref{eq:tgenerates}, Proposition \ref{prp:tcommutes} and
Lemma \ref{lem:tia} we obtain the following result.
\begin{prp}\label{prp:leftmodule}
The set
\[
\big\{t_1^{k_1}\cdots t_n^{k_n}\mid k_1,\ldots,k_n\in\Z_{\ge 0}\big\}
\]
spans $C_n^q$ as a left $\K(x_1,\ldots,x_n)$-module.
\end{prp}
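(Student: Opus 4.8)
The plan is to show that the monomials $t_1^{k_1}\cdots t_n^{k_n}$ both span and, together with left multiplication by $\K(x_1,\ldots,x_n)$, generate all of $C_n^q$. Since \eqref{eq:tgenerates} tells us $C_n^q$ is generated as a $\K(x_1,\ldots,x_n)$-ring by $t_1,\ldots,t_n$, every element of $C_n^q$ is a $\K(x_1,\ldots,x_n)$-linear combination of words in the $t_i$'s, where the scalars from $\K(x_1,\ldots,x_n)$ may appear interspersed between the $t_i$'s. So the first step is to move all scalars to the left past the $t_i$'s; the second step is to rearrange the resulting words in the $t_i$'s into the ordered monomials $t_1^{k_1}\cdots t_n^{k_n}$.

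For the first step I would argue by induction on the length of a word in the $t_i$'s. A word of the form $t_{j_1}a_1 t_{j_2}a_2\cdots$ can be rewritten by repeatedly applying Lemma~\ref{lem:tia}: given $t_i a$ with $a\in\K(x_1,\ldots,x_n)$, we have $t_i a=\sum_\ell a_{i\ell}t_\ell$ with $a_{i\ell}\in\K(x_1,\ldots,x_n)$, which moves one scalar to the left at the cost of replacing $t_i$ by a $\K(x_1,\ldots,x_n)$-combination of the $t_\ell$. Iterating this, any word $a_0 t_{j_1}a_1 t_{j_2}\cdots a_{m-1}t_{j_m}a_m$ (with $a_k\in\K(x_1,\ldots,x_n)$) can be brought to the form $\sum b\, t_{\ell_1}\cdots t_{\ell_m}$ with $b\in\K(x_1,\ldots,x_n)$, i.e. a left $\K(x_1,\ldots,x_n)$-combination of pure (unordered) monomials in the $t_i$'s. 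For the second step I would invoke Proposition~\ref{prp:tcommutes}: since $[t_i,t_j]=0$ for all $i,j$, any unordered monomial $t_{\ell_1}\cdots t_{\ell_m}$ equals an ordered monomial $t_1^{k_1}\cdots t_n^{k_n}$. Combining the two steps, every element of $C_n^q$ lies in the left $\K(x_1,\ldots,x_n)$-span of $\{t_1^{k_1}\cdots t_n^{k_n}\mid k_i\in\Z_{\ge 0}\}$, which is the claim.

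The only real subtlety — and the step I would flag as requiring care — is making the induction on word length precise: when we apply Lemma~\ref{lem:tia} to push a scalar to the left through $t_i$, the word does not literally get shorter, since $t_i$ is replaced by a combination $\sum_\ell a_{i\ell}t_\ell$ of the same number of $t$'s. What does decrease is the number of scalar factors sitting strictly between consecutive $t_i$'s (equivalently, the number of scalars not already at the far left), so the correct induction is on that quantity (for words of a fixed number $m$ of $t$-factors), or alternatively one processes the scalars from left to right, one at a time. Once the bookkeeping is set up this way the argument is routine. Note also that we never need $q$ to be generic or a non-root of unity for this proposition; only Lemma~\ref{lem:tia} and Proposition~\ref{prp:tcommutes} are used, both of which hold in the stated generality.
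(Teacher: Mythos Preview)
Your proof is correct and follows exactly the approach the paper intends: the paper's own proof is simply the one-line remark ``Combining \eqref{eq:tgenerates}, Proposition~\ref{prp:tcommutes} and Lemma~\ref{lem:tia} we obtain the following result,'' and you have spelled out precisely how these three ingredients combine. Your careful discussion of the induction bookkeeping is a welcome elaboration of what the paper leaves implicit.
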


We can now prove the following statement about the generators of the invariants of $C_n^q$.
\begin{prp}
The algebra $(C_n^q)^{S_n}$ is generated as a $\K(x_1,\ldots,x_n)^{S_n}$-ring by $\{t_1,\ldots,t_n\}$.
\end{prp}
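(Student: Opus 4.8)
The plan is to show that $(C_n^q)^{S_n}$ is spanned over $\K(x_1,\ldots,x_n)^{S_n}$ by the monomials $t_1^{k_1}\cdots t_n^{k_n}$, which immediately gives the ring-generation statement. The starting point is Proposition~\ref{prp:leftmodule}: every element $f\in C_n^q$ can be written as a finite sum $f=\sum_{\bar k} a_{\bar k}\, t_1^{k_1}\cdots t_n^{k_n}$ with coefficients $a_{\bar k}\in\K(x_1,\ldots,x_n)$. The key point is that this expression is \emph{unique}: since the $t_i$ commute (Proposition~\ref{prp:tcommutes}) and, by Lemma~\ref{lem:tia}, moving a coefficient $a\in\K(x_1,\ldots,x_n)$ past a $t_i$ produces only a $\K(x_1,\ldots,x_n)$-linear combination of the $t_j$, the monomials $\{t_1^{k_1}\cdots t_n^{k_n}\}$ are left linearly independent over $\K(x_1,\ldots,x_n)$ — this follows by transporting back to the PBW-type basis $\{y_1^{k_1}\cdots y_n^{k_n}\}$ of $C_n^q$ via the triangular change of variables in \eqref{eq:tsystem}/\eqref{eq:PX}. (If a cleaner route is wanted, one can note $C_n^q$ is a free left $\K(x_1,\ldots,x_n)$-module on the $y$-monomials, and the $t$-monomials are related to the $y$-monomials by a filtered, unitriangular transformation.)

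Granting uniqueness, I would run the standard averaging/descent argument. Let $f\in (C_n^q)^{S_n}$ and write $f=\sum_{\bar k} a_{\bar k}\, t_1^{k_1}\cdots t_n^{k_n}$ as above. For $w\in S_n$, apply $w$: since $w$ permutes the $x_i$ and fixes each $t_j$ (the $t_j$ lie in $(C_n^q)^{S_n}$), we get $w(f)=\sum_{\bar k} w(a_{\bar k})\, t_1^{k_1}\cdots t_n^{k_n}$. But $w(f)=f$, and by the uniqueness of the expansion in the $t$-monomials we conclude $w(a_{\bar k})=a_{\bar k}$ for every $\bar k$ and every $w\in S_n$, i.e. each $a_{\bar k}\in\K(x_1,\ldots,x_n)^{S_n}$. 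Hence $f$ is a $\K(x_1,\ldots,x_n)^{S_n}$-linear combination of the $t$-monomials, and in particular $(C_n^q)^{S_n}$ is generated as a $\K(x_1,\ldots,x_n)^{S_n}$-ring by $t_1,\ldots,t_n$.

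The main obstacle is establishing the left linear independence of the $t$-monomials over $\K(x_1,\ldots,x_n)$ — equivalently, that $C_n^q$ is \emph{free} (not merely spanned) as a left $\K(x_1,\ldots,x_n)$-module on $\{t_1^{k_1}\cdots t_n^{k_n}\}$ — since the averaging step collapses without it. This requires a little care because the $t_i$ do not commute with the scalars $\K(x_1,\ldots,x_n)$; the way around it is to use that $C_n^q$, being an iterated Ore extension $\K(x_1,\ldots,x_n)[y_1;\sigma_1]\cdots[y_n;\sigma_n]$, is free on the $y$-monomials, and that \eqref{eq:tsystem} expresses the $y_i$ as $\K(x_1,\ldots,x_n)$-linear combinations of the $t_j$ with an invertible (Vandermonde) coefficient matrix, giving a degree-preserving bijection between the two families of monomials. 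Everything else — the $S_n$-equivariance of the expansion and the descent to invariant coefficients — is routine.
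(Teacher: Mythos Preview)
Your argument is correct, but it works harder than necessary. The paper's proof uses the same starting point (Proposition~\ref{prp:leftmodule}) and the same $S_n$-invariance of the $t_j$, but instead of comparing coefficients via uniqueness, it simply applies the Reynolds operator: writing $u=\sum_k u_k\, t_1^{k_1}\cdots t_n^{k_n}$ and using $u=\tfrac{1}{|S_n|}\sum_w w(u)$ together with $w(t_j)=t_j$ gives immediately
\[
u=\sum_k\Big(\tfrac{1}{|S_n|}\sum_{w\in S_n} w(u_k)\Big)\,t_1^{k_1}\cdots t_n^{k_n},
\]
with symmetrized (hence $S_n$-invariant) coefficients. No linear independence of the $t$-monomials is needed, so the ``main obstacle'' you identify evaporates. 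In fact your remark that ``the averaging step collapses without it'' is not quite right: averaging over the whole group produces invariant coefficients regardless of uniqueness; it is only your version---applying a single $w$ and equating coefficients---that requires a basis. What your approach buys, if you carry out the freeness argument (which is fine: the $t$-monomials and $y$-monomials are both homogeneous for the total $y$-degree, and in each degree you have a spanning set of the correct size over the field $\K(x_1,\ldots,x_n)$), is the slightly stronger conclusion that $(C_n^q)^{S_n}$ is \emph{free} over $\K(x_1,\ldots,x_n)^{S_n}$ on the $t$-monomials, not merely generated.
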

\begin{proof}
Let $u\in(C_n^q)^{S_n}$. By Proposition \ref{prp:leftmodule} we have
\[u=\sum_{k\in(\Z_{\ge 0})^n} u_k t_1^{k_1}\cdots t_n^{k_n}\]
for some $u_k\in \K(x_1,\ldots,x_n)$.
Since $u$ and $t_1,\ldots,t_n$ are $S_n$-fixed we have
\[u=\frac{1}{|S_n|}\sum_{w\in S_n} w(u)=\sum_{k\in(\Z_{\ge 0})^n} \Big(\frac{1}{|S_n|}\sum_{w\in S_n} w(u_k)\Big) t_1^{k_1}\cdots t_n^{k_n}\]
which proves that $u\in \K(x_1,\ldots,x_n)^{S_n}\langle t_1,\ldots,t_n\rangle$.
\end{proof}

As a corollary we obtain a set of generators for the skew field $\K_q(\bar x,\bar y)^{S_n}$.
\begin{cor}\label{cor:generators_of_skewfield_invariants}
$\K_q(\bar x,\bar y)^{S_n}$ is generated as a skew field over $\K$ by the following set of $2n$ elements:
\[\{t_1,\ldots,t_n\}\cup\{e_1,\ldots,e_n\}\]
where
\begin{equation}\label{eq:ed_def}
e_d:=\sum_{1\le i_1<\cdots <i_d\le n} x_{i_1}\cdots x_{i_d},\quad d\in\iv{0}{n}
\end{equation}
is the degree $d$ elementary symmetric polynomial in $x_1,\ldots,x_n$.
\end{cor}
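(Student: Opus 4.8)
The plan is to combine the preceding results on $C_n^q$ with the standard fact from the Preliminaries that $\Frac(A^G) = \Frac(A)^G$ when $|G|$ is invertible. Since $\K$ has characteristic zero, $|S_n| = n!$ is invertible, so $\Frac\big((C_n^q)^{S_n}\big) = \Frac(C_n^q)^{S_n} = \K_q(\bar x,\bar y)^{S_n}$, using $\Frac(C_n^q)=\K_q(\bar x,\bar y)$ which was noted just after the definition of $C_n^q$. Thus it suffices to exhibit a generating set for $\K_q(\bar x,\bar y)^{S_n}$ as a skew field, and by the previous proposition $(C_n^q)^{S_n}$ is generated as a $\K(x_1,\ldots,x_n)^{S_n}$-ring by $\{t_1,\ldots,t_n\}$.

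The key remaining point is therefore to understand the skew field generated by $\K(x_1,\ldots,x_n)^{S_n}$ together with the $t_i$. First I would observe that the invariant rational function field $\K(x_1,\ldots,x_n)^{S_n}$ equals $\Frac$ of the polynomial ring $\K[e_1,\ldots,e_n]$ in the elementary symmetric polynomials, by the classical theorem on symmetric functions (the $x_i$ being central, this is the purely commutative statement). Hence $\K(x_1,\ldots,x_n)^{S_n}$ is generated as a field by $\{e_1,\ldots,e_n\}$. Combining, the skew field $\K_q(\bar x,\bar y)^{S_n} = \Frac\big((C_n^q)^{S_n}\big)$ is generated as a skew field over $\K$ by $\{t_1,\ldots,t_n\}\cup\{e_1,\ldots,e_n\}$: indeed any element of $(C_n^q)^{S_n}$ is a polynomial in the $t_i$ with coefficients in $\K(x_1,\ldots,x_n)^{S_n}$, and those coefficients in turn lie in the subfield generated by the $e_d$, so the full list of $2n$ elements generates the whole skew field.

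The only mild subtlety — and the step I would be most careful about — is making sure the chain of subfield generations is closed under the skew-field operations: once we pass to $\Frac$, products, sums, and inverses of elements expressible in $\{t_i\}\cup\{e_d\}$ remain so, because the skew subfield generated by a set is by definition closed under all these operations, and we have shown the generators of $(C_n^q)^{S_n}$ as a ring lie in it. No nontrivial commutation analysis is needed here since Proposition~\ref{prp:tcommutes} and Lemma~\ref{lem:tia} have already done that work inside $C_n^q$. So the proof is essentially a short assembly: invoke $\Frac(A^G)=\Frac(A)^G$, invoke the generation of $(C_n^q)^{S_n}$ over $\K(x_1,\ldots,x_n)^{S_n}$, and invoke the fundamental theorem of symmetric functions to replace $\K(x_1,\ldots,x_n)^{S_n}$ by the field generated by $e_1,\ldots,e_n$.
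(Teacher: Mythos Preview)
Your argument is correct and is exactly the one the paper intends: the corollary is stated without proof, as an immediate consequence of the preceding proposition together with $\Frac(C_n^q)=\K_q(\bar x,\bar y)$, the identity $\Frac(A^G)=\Frac(A)^G$ from the Preliminaries, and the fundamental theorem of symmetric functions for $\K(x_1,\ldots,x_n)^{S_n}$. You have simply spelled out the steps the paper leaves implicit.
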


In order to describe precise commutation relations between $t_j$ and $e_k$, it will be useful to rewrite $t_j$ as follows.
\begin{lem}
We have the following formula for $t_j$:
\begin{equation}\label{eq:formula_for_tj}
t_j=(-1)^{j-1}\Delta^{-1} \sum_{w\in S_n}\sgn(w)w\big(x_1^{n-2}x_2^{n-3}\cdots x_{n-2} e_{n-j}'y_n\big),
\quad\forall j\in\iv{1}{n},
\end{equation}
where $e_d'$ denotes the degree $d$ elementary symmetric polynomial in the variables $x_1,\ldots,x_{n-1}$ and $\Delta=\prod_{1\le i< j\le n} (x_i-x_j)$.
\end{lem}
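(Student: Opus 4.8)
The plan is to start from the Lagrange-interpolation formula \eqref{eq:PX} for $t_j$, namely
\[
t_j = \sum_{k=1}^n \left(\frac{(-1)^{n-j}e_{n-j}'(x_1,\ldots,\widehat{x_k},\ldots,x_n)}{\prod_{\ell\in\{1,\ldots,n\}\setminus\{k\}}(x_k-x_\ell)}\right)y_k,
\]
and rewrite the scalar coefficient of $y_k$ in an antisymmetrized form. First I would observe that the denominator $\prod_{\ell\neq k}(x_k-x_\ell)$ is, up to sign, the cofactor expansion piece of the Vandermonde determinant $\Delta=\prod_{i<j}(x_i-x_j)$; more precisely $\Delta = \pm \prod_{\ell\neq k}(x_k-x_\ell)\cdot \Delta_k$, where $\Delta_k$ is the Vandermonde determinant in the variables $x_1,\ldots,\widehat{x_k},\ldots,x_n$. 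Hence $\Delta^{-1}$ can be pulled out front and the coefficient of $y_k$ becomes $(-1)^{n-j}$ times $e_{n-j}'(x_1,\ldots,\widehat{x_k},\ldots,x_n)$ times the sign-adjusted $\Delta_k$, and $\Delta_k$ itself is an antisymmetrization of the monomial $x_{i_1}^{n-2}x_{i_2}^{n-3}\cdots x_{i_{n-1}}$ over the $(n-1)$-element index set $\{1,\ldots,n\}\setminus\{k\}$.

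The second step is to assemble these $n$ terms (one for each $k$) into a single sum over all of $S_n$. The key point is that summing $\Delta^{-1}\cdot(\text{antisymmetrization over }S_{n-1})\cdot y_k$ over $k=1,\ldots,n$, with the correct signs coming from moving $x_k$ into the last slot, is exactly the same as $\Delta^{-1}$ times the full antisymmetrization over $S_n$ of the single monomial-times-$y_n$ expression $x_1^{n-2}x_2^{n-3}\cdots x_{n-2}\, e_{n-j}'(x_1,\ldots,x_{n-1})\, y_n$. Here one uses that the symbol $e_{n-j}'$ standing inside the $S_n$-antisymmetrization refers to the elementary symmetric polynomial in the first $n-1$ variables $x_1,\ldots,x_{n-1}$ (the ones not in the last slot), which is consistent because $\sgn(w)$ together with the antisymmetry of the leading monomial $x_1^{n-2}\cdots x_{n-2}$ already kills any $w$ that does not send $\{1,\ldots,n-1\}$ to a fixed $(n-1)$-subset bijectively, modulo the stabilizer $S_{n-1}$ of the last coordinate — and over that stabilizer $e_{n-j}'$ is invariant, so it factors out of each coset sum cleanly. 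Tracking the global sign $(-1)^{j-1}$ requires comparing $(-1)^{n-j}$ from \eqref{eq:PX} with the sign incurred when the monomial $x_1^{n-2}x_2^{n-3}\cdots x_{n-2}$ is the "staircase minus one" exponent vector rather than the full staircase $x_1^{n-1}x_2^{n-2}\cdots x_{n-1}$; this is a straightforward bookkeeping computation.

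I expect the main obstacle to be precisely this sign and index-bookkeeping: verifying that the coefficient of each $y_k$ produced by expanding the $S_n$-antisymmetrization on the right-hand side of \eqref{eq:formula_for_tj} matches, sign and all, the coefficient of $y_k$ in \eqref{eq:PX}. The cleanest way to organize it is to fix $k$, restrict the sum over $w\in S_n$ to those $w$ with $w(n)=k$ (so that $w(x_n)=x_k$ and the factor $y_n$ becomes $y_k$), factor $S_n = \bigsqcup_k \{w : w(n)=k\}$, identify $\{w: w(n)=k\}$ with $S_{n-1}$ acting on $\{1,\ldots,n\}\setminus\{k\}$, and recognize the resulting inner sum $\sum \sgn(\sigma)\,\sigma(x_1^{n-2}\cdots x_{n-2})$ as the Vandermonde determinant $\Delta_k$ in those $n-1$ variables. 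Everything else — that $e_{n-j}'$ pulls out, that $\Delta = \pm\Delta_k\prod_{\ell\neq k}(x_k-x_\ell)$, and that the overall constant is $(-1)^{j-1}$ — then follows by comparison with \eqref{eq:PX}. No genuinely hard step is involved, only care with signs of permutations and with which set of variables each symmetric polynomial lives in.
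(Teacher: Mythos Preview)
Your proposal is correct and follows essentially the same route as the paper. The paper organizes it as: first rewrite the sum over $k$ as a sum over coset representatives for $S_n/S_{n-1}$ (applied to the single term with $k=n$), then use $\Delta/\Delta'=\prod_{k=1}^{n-1}(x_k-x_n)$ to pull out $\Delta^{-1}$ and pick up the sign $(-1)^{j-1}$, then expand $\Delta'=\sum_{w'\in S_{n-1}}\sgn(w')\,w'(x_1^{n-2}\cdots x_{n-2})$ and combine the coset and $S_{n-1}$ sums into a full $S_n$ sum---which is exactly your partition by the value $w(n)=k$, just written in the forward direction rather than by matching coefficients.
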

\begin{proof}
Let $\Delta'=\prod_{1\le i<j\le n-1} (x_i-x_j)$. Let $\mathrm{Coeff}_{X^j} A(X)$ denote the coefficient of $X^j$ in a polynomial $A(X)$. By \eqref{eq:PXa} and \eqref{eq:PXb} we have, for any $j\in\{1,\ldots,n\}$,
\begin{align*}
t_j&=\sum_{i=1}^n \Big(\mathrm{Coeff}_{X^{j-1}}\prod_{k\in\{1,\ldots,n\}\setminus\{i\}} \frac{X-x_k}{x_i-x_k} \Big)y_i\\
&=\sum_{w\in S_n/S_{n-1}} w\bigg(\frac{(-1)^{n-j}e_{n-j}'}{\prod_{k=1}^{n-1} (x_n-x_k)} y_n\bigg).
\end{align*}
Here we mean that $w$ runs through a set of representatives of $S_n/S_{n-1}$.
Since $\Delta/\Delta'=\prod_{k=1}^{n-1}(x_k-x_n)$ and $w(\Delta)=\sgn(w)\Delta$ for all $w\in S_n$, we get
\begin{equation}\label{eq:tj_formula_proof}
t_j=(-1)^{j-1}\Delta^{-1} \sum_{w\in S_n/S_{n-1}} \sgn(w)\cdot w\big(e_{n-j}'\Delta' y_n\big).
\end{equation}
Writing $\Delta'$ as a determinant gives
$\Delta'=\sum_{w\in S_{n-1}} \sgn(w)w(x_1^{n-2}x_2^{n-3}\cdots x_{n-2})$. Substituting this into \eqref{eq:tj_formula_proof} and using that $e_{n-j}'$ and $y_n$ are fixed by $S_{n-1}$,  gives
\begin{equation}
t_j=(-1)^{j-1}\Delta^{-1} \sum_{\substack{w\in S_n/S_{n-1}\\w'\in S_{n-1}}}\sgn(ww')ww'(x_1^{n-2}x_2^{n-3}\cdots x_{n-2} e_{n-j}'y_n ).
\end{equation}
Since $ww'$ runs through every element of $S_n$ exactly once when $w$ ranges over a set of representatives for $S_n/S_{n-1}$ and $w'$ runs through $S_{n-1}$ we obtain
\eqref{eq:formula_for_tj}.
\end{proof}

We now have the following proposition, describing commutation
relations between the generators $t_j$ and $e_k$.
\begin{prp}\label{prp:tj_ek}
The following relations hold in $\K_q(\bar x,\bar y)^{S_n}$:
\begin{align}
\label{eq:ti_tj}
[t_i,t_j]&=0,\qquad\forall i,j\in\iv{1}{n},\\
\label{eq:ek_el}
[e_k,e_l]&=0,\qquad\forall k,l\in\iv{0}{n},\\
\label{eq:tj_ek}
t_je_k - q^{\delta_{j+k>n}}e_kt_j &
=(q-1)\sum_{i\in\Z\setminus I(n-(j+k))}
(-1)^{i+\delta_{i<0}} e_{k+i} t_{j+i},
\end{align}
for all $ j\in\iv{1}{n}$ and $k\in\iv{0}{n}$,
where $\delta_P$ is the Kronecker delta \eqref{eq:kronecker} and for all $k\in\Z$ we put
\begin{equation}\label{eq:I_def}
I(k):=\iv{\min(0,k+1)}{\max(0,k)}=
\begin{cases}
\iv{0}{k},&k\ge 0,\\
\iv{k+1}{0},&k<0,
\end{cases}
\end{equation}
and, by convention, $t_j=0$ if $j\notin\iv{1}{n}$ and $e_k=0$ if $k\notin\iv{0}{n}$.
\end{prp}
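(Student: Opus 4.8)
The plan is to verify all three families of relations by direct computation using the explicit expression \eqref{eq:formula_for_tj} for $t_j$ together with the commutation relations $y_ix_k=q^{\delta_{ik}}x_ky_i$ in $\mathcal{O}_q(\K^{2n})$. The first relation \eqref{eq:ti_tj} is just a restatement of Proposition~\ref{prp:tcommutes}, proved in the Appendix, and \eqref{eq:ek_el} is obvious since the $e_k$ lie in the commutative subalgebra $\K(x_1,\ldots,x_n)$. So the entire content is in \eqref{eq:tj_ek}. My strategy for that relation is to compute $t_j e_k$ by pushing the $x$-variables hidden inside $e_k$ past the single factor $y_n$ appearing in formula \eqref{eq:formula_for_tj}: writing $t_j=(-1)^{j-1}\Delta^{-1}\sum_{w\in S_n}\sgn(w)\,w(x_1^{n-2}\cdots x_{n-2}\,e_{n-j}'\,y_n)$, the only noncommutativity is that $y_n$ $q$-commutes with $x_n$, so $y_n\cdot w^{-1}(e_k)$ differs from $w^{-1}(e_k)\cdot y_n$ by replacing each monomial of $e_k$ containing $x_n$ with $q$ times itself. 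Equivalently, writing $e_k = e_k^{(0)} + x_n e_{k-1}^{(0)}$ where $e_d^{(0)}=e_d(x_1,\ldots,x_{n-1})$ is the part not involving $x_n$, one has $y_n e_k = e_k^{(0)} y_n + q\, x_n e_{k-1}^{(0)} y_n = e_k y_n + (q-1) x_n e_{k-1}^{(0)} y_n$.

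The next step is bookkeeping: after moving $e_k$ to the left of $t_j$, one picks up an error term which is $\Delta^{-1}$ times a signed $S_n$-symmetrization of $x_1^{n-2}\cdots x_{n-2}\, e_{n-j}'\, (q-1) x_n e_{k-1}'\, y_n$ (here $e'$ denotes elementary symmetric polynomials in $x_1,\ldots,x_{n-1}$, as in the lemma). One then has to recognize the symmetrization $\Delta^{-1}\sum_w \sgn(w) w(x_1^{n-2}\cdots x_{n-2}\, x_n\, e_{n-j}' e_{k-1}'\, y_n)$ as a $\K(\bar x)$-linear combination of the $t_m$'s. The key algebraic identity needed is the expansion of a product $e_a' e_b'$ (symmetric polynomials in $n-1$ variables) back into the spanning set of symmetrized expressions that define the $t_m$; combined with the Pieri-type rule $x_n e_b' = $ (difference of two $e$'s in $n$ variables, after accounting for the $x_1^{n-2}\cdots x_{n-2}$ prefactor inside the alternating sum), this should produce exactly the alternating sum $\sum_i (-1)^{i+\delta_{i<0}} e_{k+i} t_{j+i}$ over $i\notin I(n-(j+k))$. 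The term $q^{\delta_{j+k>n}}$ on the left-hand side, as opposed to a bare $e_k t_j$, arises because when $j+k>n$ the "diagonal" contribution $i=0$ itself gets absorbed into the $q$-twist: the index range $I(n-(j+k))$ is precisely designed so that the excluded middle terms reassemble into $q^{\delta_{j+k>n}} e_k t_j$.

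I expect the main obstacle to be purely combinatorial: correctly identifying, after the symmetrization, which products $e_{n-m}'$ of symmetric polynomials in $n-1$ variables appear and with which coefficients, and then matching the resulting index set against the definition \eqref{eq:I_def} of $I(k)$ and the sign $(-1)^{i+\delta_{i<0}}$. Concretely this reduces to an identity among elementary symmetric polynomials of the shape $x_n\, e_b'(x_1,\ldots,x_{n-1}) = e_{b+1}(x_1,\ldots,x_n) - e_{b+1}'(x_1,\ldots,x_{n-1})$ iterated/telescoped inside the alternating sum, together with the fact that inside a sign-twisted $S_n$-symmetrization against $\Delta$, multiplying the seed monomial $x_1^{n-2}x_2^{n-3}\cdots x_{n-2}$ by $x_r$ shifts the "staircase" and can be re-expressed via $t_{m}$'s with shifted index $m$. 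I would organize the verification by induction on $k$ (the $k=0$ case being immediate since $e_0=1$ and $t_j\cdot 1 = 1\cdot t_j$ with the right-hand side empty), using the recursion $e_k = e_k^{(0)} + x_n e_{k-1}^{(0)}$ to pass from $k-1$ to $k$; this isolates exactly one new $(q-1)$-term at each stage and keeps the combinatorics manageable. Given the length of the computation, I would relegate the full details to the Appendix alongside the proof of Proposition~\ref{prp:tcommutes}, stating here only the reduction and the governing symmetric-function identity.
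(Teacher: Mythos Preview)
Your opening move is exactly the paper's: use \eqref{eq:formula_for_tj}, pull the $S_n$-invariant $e_k$ inside the sum via $w(e_k)=e_k$, and push it past $y_n$ using $e_k(x_1,\ldots,x_{n-1},qx_n)=e_k'+qx_ne_{k-1}'$. Where your plan has a real gap is the step you describe as ``recognize the symmetrization $\Delta^{-1}\sum_w\sgn(w)w(\ldots y_n)$ as a $\K(\bar x)$-linear combination of the $t_m$'s.'' The paper's device for this is concrete and you do not mention it: substitute $y_n=t_1+x_nt_2+\cdots+x_n^{n-1}t_n$ from the defining system \eqref{eq:tsystem}. After that substitution every term has the shape $\big(\Delta^{-1}\sum_{w}\sgn(w)w(\text{monomial in }x)\big)\cdot t_i$, i.e.\ a Schur function times $t_i$; the paper then argues (case-by-case in $i=j$, $i>j$, $i<j$) which exponent sequences survive antisymmetrization, applies a cyclic permutation to sort the surviving sequence into a staircase-plus-partition, and reads off $s_{1^r0^{n-r}}=e_r$. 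The $i=j$ term is what produces $q^{\delta_{j+k>n}}e_kt_j$ on the left-hand side.

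Your alternative proposal---induction on $k$ via $e_k=e_k'+x_ne_{k-1}'$---does not match the paper and I doubt it runs cleanly: $e_k'$ and $x_ne_{k-1}'$ are not $S_n$-invariant, so you cannot invoke the inductive instance of \eqref{eq:tj_ek} for $t_j\cdot e_k'$; you would be forced to redo the full antisymmetrization at each stage, which is the direct computation anyway. The ``Pieri-type'' and ``staircase shift'' heuristics you mention are in the right spirit, but the actual mechanism is the bialternant formula $s_\lambda=a_{\lambda+\rho}/a_\rho$ applied to rectangular $\lambda=1^r$, together with the observation that only one monomial of $e_{n-j}'$ (namely $x_1x_2\cdots x_{n-j}$) survives the antisymmetrization against $x_1^{n-2}x_2^{n-3}\cdots x_{n-2}$.
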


The proof of Proposition~\ref{prp:tj_ek} will be given in
Appendix.

\subsection{Simplification of the relations}
In this section we show how to inductively change generators to simplify the relations. The final set of
relations are $q$-commutation relations, which gives a positive solution to the $q$-difference Noether problem.

We will frequently use the following telescoping sum identities.
\begin{lem} If $\{T_j\}_{j\in\Z}$ is a set of commuting elements
of an algebra with at most finitely many nonzero elements, then
for all $j,k\in\Z$ the following identities hold:
\begin{align}
\label{eq:T_identity_1}
\sum_{i\in\Z\setminus I(k-j)} (-1)^{\delta_{i<0}} T_{j+i}T_{k-i} &= -\delta_{j>k} T_jT_k,\\
\label{eq:T_identity_2}
\sum_{i\in\Z\setminus I(-1+k-j)} (-1)^{\delta_{i<0}} T_{j+i}T_{k-i} &=\delta_{j<k} T_jT_k,
\end{align}
where $I(k)$ was defined in \eqref{eq:I_def}.
\end{lem}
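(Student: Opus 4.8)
The plan is to deduce both identities from a single master computation: the value of the unrestricted signed sum
\[
\Sigma(j,k):=\sum_{i\in\Z}(-1)^{\delta_{i<0}}\,T_{j+i}T_{k-i},
\]
which is a well-defined finite sum since only finitely many $T_\ell$ are nonzero. First I would evaluate $\Sigma(j,k)$. Splitting the sum according to the sign of $i$, substituting $a=j+i$ in the part with $i\ge 0$, and substituting $b=j+k-a$ (using that the $T_\ell$ pairwise commute) in the part with $i<0$, one obtains
\[
\Sigma(j,k)=\sum_{a\ge j}T_aT_{j+k-a}-\sum_{b\ge k+1}T_bT_{j+k-b},
\]
so the two tails cancel except for a finite middle block, giving
\[
\Sigma(j,k)=
\begin{cases}
\displaystyle\sum_{a=j}^{k}T_aT_{j+k-a}, & j\le k,\\
\displaystyle-\sum_{a=k+1}^{j-1}T_aT_{j+k-a}, & j>k.
\end{cases}
\]

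Next I would compute the window corrections. Writing $m=k-j$, note from \eqref{eq:I_def} that $I(m)\subseteq\Z_{\ge 0}$ when $m\ge 0$ while $I(m)\subseteq\Z_{\le 0}$ when $m<0$, and in the latter case $i=0$ is the unique element of $I(m)$ carrying the sign $(-1)^{\delta_{i<0}}=+1$. Hence, substituting $a=j+i$,
\[
\sum_{i\in I(m)}(-1)^{\delta_{i<0}}T_{j+i}T_{k-i}=
\begin{cases}
\displaystyle\sum_{a=j}^{k}T_aT_{j+k-a}, & m\ge 0,\\
\displaystyle-\sum_{a=k+1}^{j-1}T_aT_{j+k-a}+T_jT_k, & m<0.
\end{cases}
\]
Since the left-hand side of \eqref{eq:T_identity_1} equals $\Sigma(j,k)$ minus exactly this window sum, subtracting yields $0$ when $j\le k$ and $-T_jT_k$ when $j>k$; that is $-\delta_{j>k}T_jT_k$, proving \eqref{eq:T_identity_1}.

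For \eqref{eq:T_identity_2} I would repeat the argument verbatim with $I(m)$ replaced by $I(m-1)$. The window is now shifted by one, so the surviving boundary term moves from the left endpoint $a=j$ to the right endpoint $a=k$: checking the three cases $j<k$, $j=k$, $j>k$ one finds that $\Sigma(j,k)$ minus the shifted window sum equals $T_kT_j$, $0$, $0$ respectively, which by commutativity is $\delta_{j<k}T_jT_k$. The whole argument is elementary; the only place meriting care — and which I would spell out — is the index bookkeeping for $I(m)$ and $I(m-1)$ according to the sign of $m$, together with the observation that in the $m<0$ case the single index $i=0$ in the window is precisely what produces the lone surviving term on the right-hand side. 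Commutativity of the $T_\ell$ is used throughout, both for the re-indexing $b=j+k-a$ and for writing the answer in the displayed form.
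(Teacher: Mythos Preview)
Your proof is correct, but it takes a different route from the paper's. The paper reduces to $j=0$ by an index shift and then exploits the involution $i\mapsto k-i$ on $\Z\setminus I(k)$: for $k\ge 0$ this involution flips the sign $(-1)^{\delta_{i<0}}$ of every term, so the sum equals its negative and vanishes; for $k<0$ one first peels off the single term $i=k$ (which gives $-T_kT_0$) and applies the same involution to the remaining symmetric index set. Your approach instead computes the unrestricted ``master sum'' $\Sigma(j,k)$ over all of $\Z$ and then subtracts the finite window over $I(m)$ or $I(m-1)$. The paper's symmetry argument is a bit slicker and avoids evaluating $\Sigma$ explicitly, while your method is more systematic---it handles both identities through one computation and makes transparent exactly which boundary index produces the surviving term. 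Both are perfectly valid for this elementary lemma.
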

\begin{proof}
We prove \eqref{eq:T_identity_1}. The proof of \eqref{eq:T_identity_2} is analogous.
By shifting the index of $T_i$ we may assume that $j=0$.
If $k\ge 0$, then $I(k)=\iv{0}{k}$ so making the substitution $i\mapsto k-i$ in the left hand side
of \eqref{eq:T_identity_1}
we get the same expression except that $\delta_{i<0}$ has been replaced by
$\delta_{k-i<0}$ which equals $1-\delta_{i<0}$ for $i\notin \iv{0}{k}$. So both sides
of \eqref{eq:T_identity_1} are zero in this case.
If $k<0$, then $I(k)=\iv{1+k}{0}$. The $i=k$ term in the left hand side of \eqref{eq:T_identity_1}
equals
\begin{equation}\label{eq:T_identity_1_proof}
-T_kT_0.
\end{equation}
Removing this term from the sum gives a sum over the set $\Z\setminus \iv{k}{0}$ which can be seen to
be zero, after substituting $i\mapsto k-i$ as in the previous case. Therefore the left hand side
of \eqref{eq:T_identity_1} equals \eqref{eq:T_identity_1_proof} which in turn is equal to the right
hand side of \eqref{eq:T_identity_1}, since $k<0$.
\end{proof}

The following proposition describes the recursive process for simplifying the relations among the generators.
\begin{prp}\label{prp:inductive_procedure}
Suppose $T_1,\ldots,T_n$ and $E_0,E_1,\ldots,E_n$ are elements of some skew field $\F$ containing $\K$ such that
\begin{align}
\label{eq:TiTj_assumption}
 [T_i,T_j]&=0, \quad \forall i,j\in\iv{1}{n}, \\
\label{eq:EkEl_assumption}
 [E_k,E_l]&=0, \quad \forall k,l\in\iv{0}{n},\\
\label{eq:TjEk_assumption} T_j E_k -q^{\delta_{j+k>n}}E_kT_j &=
(q-1)\sum_{i\in\Z\setminus I(n-(j+k))} (-1)^{i+\delta_{i<0}}
E_{k+i} T_{j+i},
\end{align}
$\forall j\in\iv{1}{n}, k\in\iv{0}{n},$ where by convention
$T_j=0$ for $j\notin \iv{1}{n}$ and $E_k=0$ for $k\notin
\iv{0}{n}$. Define
\begin{align}
\label{eq:tilde_T_def}
\widetilde{T}_j &=
\begin{cases}
E_jT_1T_n-(-1)^jE_0T_{n-j}T_1-(-1)^{n-j}E_nT_{n+1-j}T_n,& j\in\iv{1}{n-1}, \\
0,&\text{otherwise,}
\end{cases}\\
\label{eq:tilde_E_def}
\widetilde{E}_k &=
\begin{cases}
 T_{k+1},& k\in\iv{0}{n-1},\\
 0,&\text{otherwise.}
\end{cases}
\end{align}
Then
\begin{align}
\label{eq:tilde_T_commute}
[\widetilde{T}_i,\widetilde{T}_j] &=0,\quad\forall i,j\in\iv{1}{n-1},\\
\label{eq:tilde_E_commute}
[\widetilde{E}_k,\widetilde{E}_l] &=0,\quad\forall k,l\in\iv{0}{n-1},
\end{align}
and, where
$\circ$ denotes the opposite multiplication
$a\circ b=ba$,
\begin{equation} \label{eq:tilde_T_tilde_E}
\widetilde{T}_j \circ \widetilde{E}_k -q^{\delta_{j+k>n-1}}\widetilde{E}_k \circ \widetilde{T}_j = (q-1)\sum_{i\in\Z\setminus I(n-1-(j+k))}
(-1)^{i+\delta_{i<0}} \widetilde{E}_{k+i} \circ \widetilde{T}_{j+i}
\end{equation}
for all $j\in\iv{1}{n-1}$, $k\in\iv{0}{n-1}$. Moreover, we have the following
alternative expression for $\widetilde{T}_j$:
\begin{equation}\label{eq:tilde_T_alt}
q\widetilde{T}_j=
T_nT_1E_j-(-1)^{j}T_1T_{n-j}E_0-(-1)^{n-j}T_nT_{n+1-j}E_n,
\quad\forall j\in\iv{1}{n-1}
\end{equation}
which is equal to the right hand side of \eqref{eq:tilde_T_def}
calculated in the opposite algebra. Furthermore, the set
$\{E_0,E_n\}\cup\{\widetilde{T}_j\}_{j=1}^{n-1}\cup\{\widetilde{E}_k\}_{k=0}^{n-1}$
generates the same skew subfield of $\F$ as the original generators
$\{T_j\}_{j=1}^n\cup\{E_k\}_{k=0}^n$.
\end{prp}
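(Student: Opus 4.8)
The plan is to verify the three families of relations \eqref{eq:tilde_T_commute}, \eqref{eq:tilde_E_commute}, \eqref{eq:tilde_T_tilde_E} essentially by direct computation, exploiting the telescoping identities \eqref{eq:T_identity_1} and \eqref{eq:T_identity_2} to collapse the sums that arise. The relation \eqref{eq:tilde_E_commute} is immediate: $\widetilde{E}_k=T_{k+1}$ and the $T_j$ pairwise commute by \eqref{eq:TiTj_assumption}. For the remaining two, the first observation I would record is that $T_1$ and $T_n$ are central-like in a weak sense: since all $T_j$ commute, each $\widetilde{T}_j$ is a left $E$-linear combination of the commuting elements $T_1T_n$, $T_{n-j}T_1$, $T_{n+1-j}T_n$, so in computing $[\widetilde{T}_i,\widetilde{T}_j]$ the only non-commutativity comes from moving $E$'s past $T$'s, governed by \eqref{eq:TjEk_assumption}.

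For \eqref{eq:tilde_T_commute} I would expand $\widetilde{T}_i\widetilde{T}_j-\widetilde{T}_j\widetilde{T}_i$ into nine terms of the shape $E_a(\text{monomial in }T)E_b(\text{monomial in }T)$ minus the swapped version, use \eqref{eq:TjEk_assumption} to bring each into a normal form where all $E$'s stand to the left, and then check that the resulting $E$-coefficients — which are themselves sums indexed by $\Z\setminus I(\cdot)$ — cancel in pairs after the substitution $i\mapsto -i$ (or a shift), precisely as in the proof of the telescoping lemma. The alternative expression \eqref{eq:tilde_T_alt} for $q\widetilde T_j$ should come out as a byproduct: normal-ordering $\widetilde T_j$ with $E$'s on the \emph{right} instead of the left, using \eqref{eq:TjEk_assumption} once on each of the three terms, produces exactly the right-hand side of \eqref{eq:tilde_T_alt} after collecting the $(q-1)$-corrections with a telescoping sum. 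Equivalently — and this is the conceptual shortcut I would emphasize — the hypotheses \eqref{eq:TiTj_assumption}–\eqref{eq:TjEk_assumption} are symmetric under passing to the opposite algebra together with the relabeling $T_j\leftrightarrow$ (its role), so \eqref{eq:tilde_T_alt} is literally \eqref{eq:tilde_T_def} read in $\F^{\mathrm{op}}$, which also explains why $\circ$ appears in \eqref{eq:tilde_T_tilde_E}.

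The relation \eqref{eq:tilde_T_tilde_E} is the heart of the matter and the step I expect to be the main obstacle. Here $\widetilde{E}_k=T_{k+1}$, so I must compute $\widetilde T_j\circ T_{k+1} - q^{\delta_{j+k>n-1}}T_{k+1}\circ\widetilde T_j$, i.e. in the original algebra $T_{k+1}\widetilde T_j - q^{\delta_{j+k>n-1}}\widetilde T_j T_{k+1}$; but $\widetilde T_j$ is built from $E$'s and $T$'s, and $T_{k+1}$ commutes with the $T$-part while it must be pushed past each $E_a$ in $\widetilde T_j$ using \eqref{eq:TjEk_assumption}. Each such push contributes a $q$-power $q^{\delta_{j'+a>n}}$ (matching the $q^{\delta_{j+k>n-1}}$ on the left after the index bookkeeping, which is why $n$ drops to $n-1$) plus a $(q-1)$-tail; assembling the three tails from the three terms of \eqref{eq:tilde_T_def} and using \eqref{eq:T_identity_1}/\eqref{eq:T_identity_2} to telescope should collapse everything to the single sum $(q-1)\sum_{i\in\Z\setminus I(n-1-(j+k))}(-1)^{i+\delta_{i<0}}\widetilde E_{k+i}\circ\widetilde T_{j+i}$. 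The bookkeeping of the index ranges — tracking when $T_{n-j}$, $T_{n+1-j}$, $E_0$, $E_n$ fall outside their allowed intervals and hence vanish, and checking that the boundary cases $j+k=n-1$, $j+k=n$, $k=0$, $k=n-1$ behave correctly — is where the delicacy lies, and I would handle it by treating the generic case first and then verifying the finitely many edge cases separately. Finally, the generation claim is easy: \eqref{eq:tilde_E_def} gives $T_2,\ldots,T_n$ directly, $T_1$ is then recovered from $\widetilde T_1$ (or any $\widetilde T_j$) together with $E_0$, $E_n$ and the already-recovered $T$'s by solving \eqref{eq:tilde_T_def}, and then \eqref{eq:tilde_T_def} itself expresses $E_1,\ldots,E_{n-1}$ in terms of the $\widetilde T_j$, $E_0$, $E_n$ and the $T$'s — so the two generating sets are interdefinable over the skew subfield of $\F$ they generate.
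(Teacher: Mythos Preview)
Your approach is essentially the paper's, built on the same ingredients (normal-ordering via \eqref{eq:TjEk_assumption} and the telescoping identities \eqref{eq:T_identity_1}, \eqref{eq:T_identity_2}), but the order of the argument and a few details differ in ways worth noting.

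First, the paper proves \eqref{eq:tilde_T_tilde_E} \emph{before} \eqref{eq:tilde_T_commute} and \eqref{eq:tilde_T_alt}, and this ordering is what makes the latter two tractable. From the special cases $k=0$ and $k=n-1$ of \eqref{eq:tilde_T_tilde_E} one reads off $T_1\widetilde T_j=\widetilde T_jT_1$ and $T_n\widetilde T_j=q\widetilde T_jT_n$; then \eqref{eq:tilde_T_alt} follows in one line from $q\widetilde T_j=T_1T_n\,\widetilde T_j\,(T_1T_n)^{-1}$ together with $[T_j,E_0]=0$ and $[T_j,E_n]_q=0$. Your ``opposite-algebra symmetry'' shortcut does not work as stated: relation \eqref{eq:TjEk_assumption} is not self-dual under passing to $\F^{\mathrm{op}}$ (the $q$-power lands on the wrong side), so \eqref{eq:tilde_T_alt} is not literally \eqref{eq:tilde_T_def} read in the opposite algebra without the concrete argument above.

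Second, in your sketch of \eqref{eq:tilde_T_tilde_E} you speak of ``assembling the three tails''. In fact only pushing $T_{k+1}$ past $E_j$ produces a $(q-1)$-tail; pushing past $E_0$ and $E_n$ is clean (again $[T_j,E_0]=0$, $[T_j,E_n]_q=0$) and leaves the scalar-weighted $T$-monomials $(1-q^{\delta_{j+k+1>n}})E_0T_{k+1}T_{n-j}T_1$ and $(q-q^{\delta_{j+k+1>n}})E_nT_{k+1}T_{n+1-j}T_n$. The trick is then to \emph{rewrite} these two non-sum contributions as telescoping sums via \eqref{eq:T_identity_1} and \eqref{eq:T_identity_2}, after which all three pieces combine termwise into the right-hand side of \eqref{eq:tilde_T_tilde_E}.

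Third, the paper's proof of \eqref{eq:tilde_T_commute} does expand $\widetilde T_j\widetilde T_k$ into normal form as you propose, but it uses the already-established \eqref{eq:tilde_T_tilde_E} (and its $k=0,\,n-1$ special cases) to move $T_1$, $T_n$, $T_{n-j}$, $T_{n+1-j}$ past $\widetilde T_k$ before expanding the latter; this organization is what keeps the bookkeeping manageable. Doing the raw nine-term expansion without that tool would work in principle but is considerably longer.

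Finally, a small slip in your generation argument: $\widetilde E_0=T_1$, so \emph{all} of $T_1,\ldots,T_n$ are recovered directly from the $\widetilde E_k$; then $E_j=\big(\widetilde T_j+(-1)^jE_0T_{n-j}T_1+(-1)^{n-j}E_nT_{n+1-j}T_n\big)(T_1T_n)^{-1}$ recovers $E_1,\ldots,E_{n-1}$.
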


The proof of Proposition~\ref{prp:inductive_procedure} will be
given in the Appendix.

We can now prove the following theorem which implies Theorem~II for the symmetric group $S_n$.
\begin{thm} \label{thm:qNoether_main_result}
Define a set of elements $e_k^{(i)}\in \K_q(\bar x,\bar y)^{S_n}$
for $i\in\iv{0}{n}$, $k\in\iv{0}{n-i}$ recursively by
\begin{subequations}\label{eq:eki_def}
\begin{align}
e_k^{(0)} =& e_k,\qquad \forall k\in\iv{0}{n}, \\
e_k^{(1)} =& t_{k+1},\qquad \forall k\in\iv{0}{n-1},\\
e_k^{(i)} =& e_{k+1}^{(i-2)}e_0^{(i-1)}e_{n-i+1}^{(i-1)}
-(-1)^{k+1} e_0^{(i-2)}  e_{n-i-k}^{(i-1)}  e_0^{(i-1)}  \\
&-(-1)^{n-i+1-k} e_{n-i+2}^{(i-2)} e_{n-i+1-k}^{(i-1)}e_{n-i+1}^{(i-1)},
\qquad\forall k\in\iv{0}{n-i}, \forall i\in\iv{2}{n}, \nonumber
\end{align}
\end{subequations}
where $e_d$ and $t_j$ were defined in \eqref{eq:ed_def} and \eqref{eq:PX} respectively.
Let
\begin{subequations}\label{eq:qNoether_XY_def}
\begin{align}
(X_1,X_2,\ldots,X_n)= & (e_n^{(0)}, e_{n-1}^{(1)},\ldots ,e_1^{(n-1)}),\\
(Y_1,Y_2,\ldots,Y_n)= & (e_0^{(1)}, e_0^{(2)},\ldots e_0^{(n)} ),
\end{align}
\end{subequations}
and put
\begin{subequations}\label{eq:qNoether_hatXhatY_def}
\begin{gather}
\widehat{X}_1  = X_1,\quad \widehat{X}_i = Y_{i-1}^{(-1)^i} X_i^{(-1)^{i+1}},\qquad\forall i\in\iv{2}{n},\\
\widehat{Y}_1  = Y_1,\quad \widehat{Y}_2 = Y_1^{-2}Y_2,\quad\widehat{Y}_j=Y_{j-2}^{-1}Y_{j-1}^{-2}Y_j,
\qquad\forall j\in\iv{3}{n},
\end{gather}
\end{subequations}
Then there is an isomorphism of $\K$-algebras
\begin{align}
\label{eq:qNoether_isomorphism}
\K_q(\bar x,\bar y)\overset{\sim}{\longrightarrow} & \K_q(\bar x,\bar y)^{S_n}
\intertext{given by}
\label{eq:qNoether_isomorphism_2}
x_k\longmapsto &\widehat{X}_k,\qquad\forall k\in\iv{1}{n},\\
\label{eq:qNoether_isomorphism_3}
y_k\longmapsto &\widehat{Y}_k,\qquad\forall k\in\iv{1}{n}.
\end{align}
\end{thm}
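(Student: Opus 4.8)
The plan is to iterate Proposition~\ref{prp:inductive_procedure} exactly $n$ times and to track carefully how the generators transform, in order to land on a pure $q$-commutation relation for the final batch of generators. First I would observe that by Corollary~\ref{cor:generators_of_skewfield_invariants} and Proposition~\ref{prp:tj_ek}, the skew field $\K_q(\bar x,\bar y)^{S_n}$ is generated over $\K$ by $T_j:=t_j$ ($j\in\iv{1}{n}$) and $E_k:=e_k$ ($k\in\iv{0}{n}$), and these satisfy exactly the hypotheses \eqref{eq:TiTj_assumption}--\eqref{eq:TjEk_assumption} of Proposition~\ref{prp:inductive_procedure} with the given $n$. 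The recursion \eqref{eq:eki_def} is then just the bookkeeping of applying that proposition: at step $i$, the elements $e_k^{(i-1)}$ ($k\in\iv{0}{n-i+1}$) together with $e_k^{(i-2)}$ in the roles of the ``outer'' $E_0$ and $E_n$ play the roles of $T_j$ and $E_k$ in the proposition for the reduced index $n-i+1$, and $e_k^{(i)}$ is the output $\widetilde T_k$ while $e_k^{(i-1)}=\widetilde E_k$ shifts down. One must check this matching of indices is consistent with the two ``outer'' generators $E_0,E_n$ that Proposition~\ref{prp:inductive_procedure} says are retained, which is precisely why the formula for $e_k^{(i)}$ refers back to level $i-2$: those are the surviving $E_0^{(i-2)}$ and $E_n^{(i-2)}$ from the previous application. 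I would set this up as a finite induction on $i$, with inductive hypothesis ``$\{e_0^{(i-1)},\,e_{n-i+1}^{(i-1)}\}\cup\{e_k^{(i)}\}_k\cup\{e_k^{(i-1)}\}_k$ together with all previously split-off pairs $\{e_0^{(m)},e_{n-m}^{(m)}\}$ for $m<i-1$ generate the original skew field, and the first family satisfies the relations \eqref{eq:TiTj_assumption}--\eqref{eq:TjEk_assumption} at level $n-i+1$,'' the base case $i=1$ being the content of Proposition~\ref{prp:tj_ek}, and the induction step being one application of Proposition~\ref{prp:inductive_procedure} (including the alternative expression \eqref{eq:tilde_T_alt}, which is what lets the opposite-multiplication ``$\circ$'' in \eqref{eq:tilde_T_tilde_E} be absorbed back into ordinary multiplication at the next stage).

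After $n$ iterations the relation \eqref{eq:TjEk_assumption} has index range $n-n=0$, so the sum on the right-hand side is empty and the only surviving ``$T$''/``$E$'' generators are the accumulated pairs. Concretely, reading off \eqref{eq:qNoether_XY_def}, the elements $X_i=e_{n-i+1}^{(i-1)}$ are the ``top'' generators $e_n^{(0)},e_{n-1}^{(1)},\ldots$ split off at each stage, and $Y_i=e_0^{(i)}$ are the ``bottom'' generators $e_0^{(1)},\ldots,e_0^{(n)}$; these $2n$ elements generate $\K_q(\bar x,\bar y)^{S_n}$ over $\K$. I would then extract, from the chain of relations \eqref{eq:tilde_T_tilde_E} specialized at each level, the commutation relations among the $X_i$ and $Y_j$: the $Y$'s commute among themselves (they are successive $E_0$'s, which commute by \eqref{eq:EkEl_assumption} at each level), the $X$'s commute among themselves (they are successive $E_n$'s across \emph{different} levels, and one needs the fact from Proposition~\ref{prp:inductive_procedure} that $E_n$ commutes with everything retained — this is the part requiring a short separate argument, since $E_n$ at a later level is a polynomial in earlier generators), and the mixed relations $X_iY_j = q^{\pm\delta(\cdots)} Y_j X_i$ come from plugging $k=0$ (and the appropriate $j$) into \eqref{eq:tilde_T_tilde_E} at the relevant level, where the empty-sum phenomenon at the boundary forces a clean $q$-commutation. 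The change of variables \eqref{eq:qNoether_hatXhatY_def} is designed to diagonalize this: the exponents $(-1)^i$ and the telescoping products $Y_{j-2}^{-1}Y_{j-1}^{-2}Y_j$ are exactly what is needed so that $\widehat X_k$ and $\widehat Y_k$ satisfy $\widehat Y_k\widehat X_k = q\,\widehat X_k\widehat Y_k$ and all other pairs commute; verifying this is a direct, if slightly tedious, computation with the $q$-commutation relations already in hand, using that conjugating a $q$-eigenvector by another $q$-eigenvector just adds exponents.

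Finally, having shown that $\widehat X_1,\ldots,\widehat X_n,\widehat Y_1,\ldots,\widehat Y_n$ generate $\K_q(\bar x,\bar y)^{S_n}$ over $\K$ and satisfy precisely the defining relations of $\mathcal{O}_q(\K^{2n})$, I would conclude there is a surjective $\K$-algebra homomorphism $\K_q[\bar x,\bar y]\to \K_q(\bar x,\bar y)^{S_n}$, $x_k\mapsto\widehat X_k$, $y_k\mapsto\widehat Y_k$, which extends to the skew fields of fractions since the target is a skew field; the extension $\K_q(\bar x,\bar y)\to\K_q(\bar x,\bar y)^{S_n}$ is automatically injective (the source is a skew field and the map is nonzero), and it is surjective because the $\widehat X_k,\widehat Y_k$ generate the target as a skew field. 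This gives the isomorphism \eqref{eq:qNoether_isomorphism}.

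\textbf{Main obstacle.} The genuine difficulty is not any single computation but maintaining the correct index bookkeeping through the $n$-fold iteration: one must be certain that at stage $i$ the triple of index families $\bigl(\{e_k^{(i-2)}\},\{e_k^{(i-1)}\},\{e_k^{(i)}\}\bigr)$ matches up with $(E_0/E_n$ at level $n-i+2$, the $E_k$ and $T_j$ at level $n-i+1$, and the output $\widetilde T_k)$ in Proposition~\ref{prp:inductive_procedure}, and in particular that the reference back to level $i-2$ in \eqref{eq:eki_def} correctly picks out the retained pair $\{E_0,E_n\}$ rather than some other generator. A secondary subtlety is keeping track of the opposite multiplication: each application of Proposition~\ref{prp:inductive_procedure} flips the relation into the opposite algebra (that is the role of $\circ$ and of the alternative formula \eqref{eq:tilde_T_alt}), so after an even number of steps one is back in the original algebra and after an odd number one is in its opposite — this accounts for the $(-1)^i$ pattern in \eqref{eq:qNoether_hatXhatY_def} and must be handled consistently, lest the final $q$-commutation come out as $q^{-1}$ instead of $q$ for some of the pairs.
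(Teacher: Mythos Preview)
Your overall strategy is correct and matches the paper's: iterate Proposition~\ref{prp:inductive_procedure}, tracking the alternation between the algebra and its opposite, then normalize the resulting $q$-commutation relations by the change of variables~\eqref{eq:qNoether_hatXhatY_def}. The indexing bookkeeping you worry about is indeed the heart of the argument, and the paper handles it exactly as you outline, by setting $\F_i$ to be the original skew field or its opposite according to the parity of $i$.

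However, there is a genuine error in your description of the commutation relations among the $X_i$ and $Y_j$. You assert that ``the $X$'s commute among themselves (they are successive $E_n$'s across different levels, and one needs the fact from Proposition~\ref{prp:inductive_procedure} that $E_n$ commutes with everything retained).'' This is false: from \eqref{eq:TjEn} one has $T_jE_n=qE_nT_j$, not $T_jE_n=E_nT_j$, so the top element $E_n$ at one level does \emph{not} commute with the generators at the next level. In fact the paper shows that
\[
X_kX_i=q^{(-1)^{i+1}a_{k-i}}X_iX_k\qquad(k\ge i),
\]
where $a_k$ is the Pell-like sequence $a_k=2a_{k-1}+a_{k-2}$, $a_0=0$, $a_1=1$. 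Similarly the mixed relations are not of the form $X_iY_j=q^{\pm\delta(\cdots)}Y_jX_i$ with exponent $\pm 1$, but rather $Y_kX_i=q^{(-1)^{i+1}a_{k-i+1}}X_iY_k$ for $k\ge i$. Establishing these relations requires a separate induction on $k-i$ (the paper's relations \eqref{eq:e_things}), beyond the iteration of Proposition~\ref{prp:inductive_procedure} itself: one must show that $e_j^{(k)}$ commutes with $e_0^{(i)}$ and $q$-commutes with $e_{n-i}^{(i)}$ with the exponent $(-1)^ia_{k-i}$, using the recursive definition of $e_j^{(k)}$ and the identity $a_{k-i}=2a_{k-1-i}+a_{k-2-i}$.

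Consequently your description of the final change of variables as ``a direct, if slightly tedious, computation'' understates what is needed. The $2n\times 2n$ integer matrix $S$ governing $Z_iZ_j=q^{s_{ij}}Z_jZ_i$ (with $(Z_1,\ldots,Z_{2n})=(X_1,Y_1,\ldots,X_n,Y_n)$) is not nearly diagonal; its entries are built from the $a_k$. The paper exhibits an explicit integer matrix $U$ with $U^tSU$ block-diagonal with blocks $\left[\begin{smallmatrix}0&-1\\1&0\end{smallmatrix}\right]$, and it is this $U$ that dictates the precise form of~\eqref{eq:qNoether_hatXhatY_def}. Without the correct intermediate relations you cannot verify that \eqref{eq:qNoether_hatXhatY_def} does the job, nor could you have discovered it.
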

\begin{proof}
First we prove that for each $i\in\iv{1}{n}$, the elements
\begin{subequations}\label{eq:ET_phone_home}
\begin{align}
(E_0,\ldots,E_{n-i+1})= &(e_0^{(i-1)},\ldots, e_{n-i+1}^{(i-1)}),\\
(T_1,\ldots,T_{n-i+1})= &(e_0^{(i)},\ldots, e_{n-i}^{(i)}),
\end{align}
\end{subequations}
satisfy relations \eqref{eq:TiTj_assumption},\eqref{eq:EkEl_assumption} and \eqref{eq:TjEk_assumption}
 with $n$ replaced by $n-i+1$, and
\begin{equation}\label{eq:F_semi-opposite_def}
\F=\F_i:=\begin{cases}\K_q(\bar x,\bar y)^{S_n}& \text{if $i$ is odd,}\\
 \big(\K_q(\bar x,\bar y)^{S_n}\big)^{\mathrm{op}}& \text{if $i$ is even.}\end{cases}
\end{equation}
We prove this by induction on $i$.
For $i=1$ this follows from Proposition~\ref{prp:tj_ek}.
For $i>1$ we may, by the induction hypothesis, apply Proposition~\ref{prp:inductive_procedure}
with $n$ replaced by $n-i+2$ and
\begin{subequations}\label{eq:ind_step_temp}
\begin{align}
(E_0,\ldots,E_{n-i+2}) &= (e_0^{(i-2)},\ldots, e_{n-i+2}^{(i-2)}), \\
(T_1,\ldots,T_{n-i+2}) &= (e_0^{(i-1)},\ldots, e_{n-i+1}^{(i-1)}),
\end{align}
\end{subequations}
and $\F=\F_{i-1}$. Substituting \eqref{eq:ind_step_temp} into
 \eqref{eq:tilde_T_def}, \eqref{eq:tilde_E_def},
we obtain
\begin{equation}\label{eq:ET_ph2}
(e_0^{(i-1)},\ldots,e_{n-i+1}^{(i-1)})= (\widetilde{E}_0,\ldots,\widetilde{E}_{n-i+1}),
\end{equation}
and, \emph{in the algebra $\F_{i-1}$},
\[
(e_0^{(i)},\ldots,e_{n-i}^{(i)})= (\widetilde{T}_1,\ldots,\widetilde{T}_{n-i+1}),
\]
by the definition of $e_k^{(i)}$. Thus, keeping in mind \eqref{eq:tilde_T_alt},
we obtain that in $\K_q(\bar x,\bar y)^{S_n}$,
\begin{equation}\label{eq:ET_ph3}
(e_0^{(i)},\ldots,e_{n-i}^{(i)})= q^{\delta_{i-1\in 2\Z}} (\widetilde{T}_1,\ldots,\widetilde{T}_{n-i+1}).
\end{equation}
However, the possible extra factor $q$ does matter;
the conclusion from Proposition \ref{prp:inductive_procedure}
that relations \eqref{eq:tilde_T_commute},\eqref{eq:tilde_E_commute}, and \eqref{eq:tilde_T_tilde_E} (with
$n$ replaced by $n-i+2$) hold
in $\F_{i-1}$ implies that,
choosing $E_k, T_j$ as in \eqref{eq:ET_phone_home}, relations
 \eqref{eq:TiTj_assumption},\eqref{eq:EkEl_assumption},\eqref{eq:TjEk_assumption}
(with $n$ replaced by $n-i+1$) hold in the algebra $\F_i$. This proves the induction step.

In particular, by \eqref{eq:TiTj_assumption} and \eqref{eq:EkEl_assumption},
\begin{subequations}\label{eq:e_identities_a}
\begin{align}
[e_j^{(i)},e_0^{(i)}] =&0,\quad\forall j\in\iv{0}{n-i},\forall i\in\iv{0}{n},\\
[e_j^{(i)},e_{n-i}^{(i)} ]=& 0, \quad\forall j\in\iv{0}{n-i},\forall i\in\iv{0}{n}.
\end{align}
By \eqref{eq:TjE0} and \eqref{eq:TjEn} we have, in $\K(\bar x,\bar y)^{S_n}$,
\end{subequations}
\begin{subequations}\label{eq:e_identities_b}
\begin{align}
\label{eq:e_identities_1}
[e_{j}^{(i+1)}, e_0^{(i)}]=&0,\quad \forall j\in\iv{0}{n-i-1},\forall i\in\iv{0}{n-1},\\
\label{eq:e_identities_2}
[e_{j}^{(i+1)}, e_{n-i}^{(i)}]_{q^{(-1)^i}} =&0,\quad \forall
j\in\iv{1}{n-i-1},\forall i\in\iv{0}{n-1}.
\end{align}
\end{subequations}
More generally, the following relations hold:
\begin{subequations}\label{eq:e_things}
\begin{align}
 \label{eq:e_thing_1}
[e_j^{(k)}, e_0^{(i)}] &=0,\quad \forall j\in\iv{0}{n-k}, 0\le i\le k\le n, \\
\label{eq:e_thing_2}
[ e_j^{(k)}, e_{n-i}^{(i)} ]_{q^{(-1)^i\cdot a_{k-i}}} &= 0,\quad \forall j\in\iv{0}{n-k}, 0\le i\le k\le n,
\end{align}
\end{subequations}
where $a_k\in\Z$ is defined by the recursion relation
\begin{equation}\label{eq:a_recursion}
a_k=2a_{k-1}+a_{k-2},\;\; a_0=0,\; a_1=1.
\end{equation}
To prove this we use induction on $k-i$. For $k-i=0$ and $k-i=1$,
relations \eqref{eq:e_things} follow from \eqref{eq:e_identities_a} and \eqref{eq:e_identities_b} respectively.
Assume $k-i>1$. By the induction hypothesis we have, for any $j_1,j_2,j_3$,
\begin{equation}\label{eq:e_stuff1}
[ e_{j_1}^{(k-2)}e_{j_2}^{(k-1)}e_{j_3}^{(k-1)} , e_0^{(i)} ] = 0
\end{equation}
and
\begin{align}
 e_{j_1}^{(k-2)}e_{j_2}^{(k-1)}e_{j_3}^{(k-1)}\cdot e_{n-i}^{(i)}
 = & q^{(-1)^i\cdot (a_{k-2-i}+2a_{k-1-i}) } e_{n-i}^{(i)}
\cdot
 e_{j_1}^{(k-2)}e_{j_2}^{(k-1)}e_{j_3}^{(k-1)} \nonumber \\
= & q^{ (-1)^i\cdot a_{k-i}} e_{n-i}^{(i)} \cdot e_{j_1}^{(k-2)}e_{j_2}^{(k-1)}e_{j_3}^{(k-1)}
\label{eq:e_stuff2}
\end{align}
Using \eqref{eq:e_stuff1},\eqref{eq:e_stuff2} and the definition, \eqref{eq:eki_def},
of $e_j^{(k)}$, we obtain \eqref{eq:e_things}.

Using $X_i,Y_j$ given in \eqref{eq:qNoether_XY_def}, relations \eqref{eq:e_things} imply that
\begin{subequations}\label{eq:qNoether_XY_relations}
\begin{align}
[Y_k, Y_i]=& 0,\quad\forall k,i\in\iv{1}{n},\\
[X_k, X_i]_{q^{(-1)^{i+1}\cdot a_{k-i}}} =& 0,\quad k\ge i,\\
[Y_k, X_i]=& 0,\quad k<i,\\
[Y_k, X_i]_{q^{(-1)^{i+1}\cdot a_{k-i+1}}} =&0,\quad k\ge i.
\end{align}
\end{subequations}
This means that, putting $(Z_1,\ldots,Z_{2n})=(X_1,Y_1,X_2,Y_2,\ldots,X_n,Y_n)$,
we have
\begin{equation}
Z_iZ_j=q^{s_{ij}}Z_jZ_i,
\end{equation}
where $S=(s_{ij})$ is the $2n\times 2n$ skew-symmetric
integer matrix
\begin{equation}
S=\begin{bmatrix}
0      & -1   & -1    & -2   & -2  & -5  & \cdots & -a_{n-1} & -a_n    \\
1      &  0   &  0    & 0    & 0   &  0  & \cdots &  0       &  0      \\
1      &  0   &  0    & 1    & 1   &  2  & \cdots &  a_{n-2} & a_{n-1} \\
2      &  0   & -1    & 0    & 0   &  0  & \cdots &  0       &  0      \\
2      &  0   & -1    & 0    & 0   & -1  & \cdots & -a_{n-3} &-a_{n-2} \\
5      &  0   & -2    & 0    & 1   &  0  & \cdots &  0       &  0      \\
\vdots &\vdots&\vdots &\vdots&\vdots&\vdots&\ddots&\vdots    &\vdots   \\
a_{n-1}& 0    &-a_{n-2}& 0   &a_{n-3}& 0 & \cdots &  0       & (-1)^n  \\
a_n    & 0    &-a_{n-1}& 0   &a_{n-2}& 0 & \cdots &(-1)^{n+1}& 0
\end{bmatrix}
\end{equation}
The matrix $S$ may be brought to normal form as follows. Take
\setcounter{MaxMatrixCols}{20}
\[
U=\begin{bmatrix}
1     &      &      &      &      &      &      &          &      &          &      &                  \\
      & 1    & 1    &-2    &      & -1   &      &          &      &          &      &                  \\
      &      &-1    &      &      &      &      &          &      &          &      &                  \\
      &      &      & 1    & -1   & -2   &      &   -1     &      &          &      &                  \\
      &      &      &      &  1   &      &      &          &      &          &      &                  \\
      &      &      &      &      &  1   &  1   &   -2     &      &   -1     &      &                  \\
\vdots&\vdots&\vdots&\vdots&\vdots&\vdots&\ddots&\ddots    &\ddots&\ddots    &\ddots&            \\
      &      &      &      &      &      &\cdots&            1   &(-1)^{n-1}& -2   &          & -1     \\
      &      &      &      &      &      &\cdots&                &(-1)^n    &      &          &        \\
      &      &      &      &      &      &\cdots&                &          &  1   &(-1)^n    & -2     \\
      &      &      &      &      &      &\cdots&                &          &      &(-1)^{n+1}&        \\
      &      &      &      &      &      &\cdots&                &          &      &          &  1
\end{bmatrix}
\]
where zero entries were omitted. Then $U^tSU$ is block diagonal
with $n$ copies of
$\left[\begin{smallmatrix}0&-1\\1&0\end{smallmatrix}\right]$ on
the diagonal. As in the proof of Proposition \ref{prp:normalform},
we see that the matrix $U$ corresponds exactly to the change of
variables \eqref{eq:qNoether_hatXhatY_def}. This proves that
\begin{equation}\label{eq:hatXhatY_relations}
\begin{gathered}
\widehat{Y}_i\widehat{X}_j= q^{\delta_{ij}} \widehat{X}_j\widehat{Y}_i, \quad\forall i,j\in\iv{1}{n},\\
[\widehat{X}_i,\widehat{X}_j]=[\widehat{Y}_i,\widehat{Y}_j]=0,\quad\forall i,j\in\iv{1}{n}.
\end{gathered}
\end{equation}
Moreover, one can easily check that the set
$\{\widehat{X}_1,\ldots,\widehat{X}_n,\widehat{Y}_1,\ldots,\widehat{Y}_n\}$
generate the skew subfield isomorphic to the skew subfield
generated by $X_1, \ldots, X_n, Y_1, \ldots, Y_n$.
 Alternatively, one may prove
\eqref{eq:hatXhatY_relations} directly by using
\eqref{eq:qNoether_XY_relations} and
\eqref{eq:qNoether_hatXhatY_def}.

Now \eqref{eq:hatXhatY_relations} implies the existence of a
unique $\K$-algebra homomorphism \eqref{eq:qNoether_isomorphism}
satisfying
\eqref{eq:qNoether_isomorphism_2},\eqref{eq:qNoether_isomorphism_3}.
Since the domain is a skew field, it is sufficient to show that
the homomorphism  is surjective. It follows from
Proposition~\ref{prp:inductive_procedure} that the set
$\{X_1,\ldots,X_n,Y_1,\ldots,Y_n\}$ generates $\K_q(\bar x,\bar y)^{S_n}$ as a skew field over $\K$. Hence
\[ \{\widehat{X}_1,\ldots,\widehat{X}_n,\widehat{Y}_1,\ldots,\widehat{Y}_n\} \]
also generates $\K_q(\bar x,\bar y)^{S_n}$ and thus the
homomorphism \eqref{eq:qNoether_isomorphism} is surjective. This
concludes the proof.
\end{proof}

\begin{cor}\label{cor-weyl}
We have an isomorphism of $\K$-algebras
\begin{equation}
\big(\Frac (A_1^q(\K)^{\otimes_\K n})\big)^{S_n}\simeq
\Frac (A_1^q(\K)^{\otimes_\K n}).
\end{equation}
\end{cor}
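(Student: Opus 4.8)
The plan is to deduce Corollary~\ref{cor-weyl} directly from Theorem~\ref{thm:qNoether_main_result} together with the identification of skew fields of fractions established in Proposition~\ref{prp:qWeyl_iso}. First I would recall that by Proposition~\ref{prp:qWeyl_iso}, applied with $q_1=\cdots=q_n=q$, the tensor product of quantum Weyl algebras $A_1^q(\K)^{\otimes_\K n}$ and the tensor product of quantum planes $\mathcal{O}_q(\Bbbk^2)^{\otimes_\K n}\simeq \mathcal{O}_q(\Bbbk^{2n})$ have isomorphic skew fields of fractions; thus $\Frac\big(A_1^q(\K)^{\otimes_\K n}\big)\simeq \K_q(\bar x,\bar y)$ in the notation of Section~\ref{section-q-noether}.

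The second step is to check that this isomorphism is compatible with the natural $S_n$-actions on both sides. The $S_n$-action on $A_1^q(\K)^{\otimes_\K n}$ permuting the tensor factors corresponds, under the explicit isomorphism $x_i\mapsto x_i$, $y_i\mapsto (qx_i-x_i)^{-1}(y_i-1)$ of Proposition~\ref{prp:qWeyl_iso}, to the $S_n$-action on $\mathcal{O}_q(\Bbbk^{2n})$ permuting the pairs $(x_i,y_i)$, because the defining formula for the isomorphism is symmetric in the index $i$. Hence passing to invariants commutes with the isomorphism, and using the general fact (recalled in the Preliminaries, citing \cite[Sec.~3.2.1]{D}) that $\Frac(A^G)=\Frac(A)^G$ for a finite group acting on an Ore domain with $|G|$ invertible, we obtain
\[
\big(\Frac (A_1^q(\K)^{\otimes_\K n})\big)^{S_n}
\simeq \K_q(\bar x,\bar y)^{S_n}.
\]

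Finally, Theorem~\ref{thm:qNoether_main_result} gives an isomorphism of $\K$-algebras $\K_q(\bar x,\bar y)\overset{\sim}{\longrightarrow}\K_q(\bar x,\bar y)^{S_n}$. Composing the three isomorphisms,
\[
\big(\Frac (A_1^q(\K)^{\otimes_\K n})\big)^{S_n}
\simeq \K_q(\bar x,\bar y)^{S_n}
\simeq \K_q(\bar x,\bar y)
\simeq \Frac (A_1^q(\K)^{\otimes_\K n}),
\]
which is precisely the claimed isomorphism. The only point requiring a little care—and the step I would expect to be the mildest obstacle—is verifying the $S_n$-equivariance of the isomorphism in Proposition~\ref{prp:qWeyl_iso}; this is a routine but necessary check, since without it one only knows the invariant skew fields are abstractly related through two different copies of $\K_q(\bar x,\bar y)$. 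Everything else is a formal concatenation of results already proved.
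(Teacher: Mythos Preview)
Your proof is correct and follows essentially the same approach as the paper's own argument: invoke Proposition~\ref{prp:qWeyl_iso} to identify $\Frac(A_1^q(\K)^{\otimes_\K n})$ with $\K_q(\bar x,\bar y)$, observe that this isomorphism is $S_n$-equivariant, and then apply Theorem~\ref{thm:qNoether_main_result}. The paper's proof is simply a one-sentence version of exactly this, explicitly noting (as you do) that the equivariance of the isomorphism in Proposition~\ref{prp:qWeyl_iso} is the one point requiring verification.
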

\begin{proof}
Follows directly from Theorem~\ref{thm:qNoether_main_result}
and Proposition~\ref{prp:qWeyl_iso}, noting that the isomorphism
in Proposition~\ref{prp:qWeyl_iso} commutes with the $S_n$-action.
\end{proof}

We will need one more property of the isomorphism \eqref{eq:qNoether_isomorphism}.
For $r\in\K\setminus\{0\}$ we define two automorphisms $\al_r,\be_r$ of $\K_q(\bar x,\bar y)$ as follows:
\begin{gather}
\al_r, \be_r:\K_q(\bar x,\bar y)\to \K_q(\bar x,\bar y),\\
\al_r(x_j)=x_j, \quad
\al_r(y_j)=r\cdot y_j,\quad\forall j\in\iv{1}{n}\\
\be_r(x_j)=x_j, \quad
\be_r(y_j)=r^{\delta_{1j}}\cdot y_j,\quad\forall j\in\iv{1}{n}.
\end{gather}
Similarly to how one proves the commutation relations
\[
[\widehat{X}_1,\widehat{X}_j]=0,\quad \widehat{X}_1\widehat{Y}_j\widehat{X}_1^{-1}=q^{-\delta_{1j}}\widehat{Y}_j,\quad\forall j\in\iv{1}{n}\]
one can verify the following result.
\begin{lem} \label{lem:alpha_beta}
The isomorphism $g:\K_q(\bar x,\bar y)^{S_n}\to \K_q(\bar x,\bar y)$ constructed in
Theorem \ref{thm:qNoether_main_result} satisfies
\begin{equation}\label{eq:alpha_beta}
g\circ \al_r \circ g^{-1} = \be_r
\end{equation}
for all $r\in \K\setminus\{0\}$.
\end{lem}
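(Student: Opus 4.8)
The plan is to trace the automorphism $\al_r$ through the chain of generator changes that defines the isomorphism $g=$ \eqref{eq:qNoether_isomorphism}, and show that at each stage the scaling it induces is exactly compatible with the corresponding scaling on the other side, finishing with $\be_r$ on $\K_q(\bar x,\bar y)$. Since $g^{-1}$ sends $x_k\mapsto\widehat X_k$ and $y_k\mapsto\widehat Y_k$, it suffices to compute $g\big(\al_r(\widehat X_k)\big)$ and $g\big(\al_r(\widehat Y_k)\big)$ and check these equal $\be_r(x_k)=x_k$ and $\be_r(y_k)=r^{\delta_{1k}}y_k$ respectively; equivalently, by \eqref{eq:qNoether_hatXhatY_def} it is enough to track how $\al_r$ acts on $X_i$ and $Y_j$ from \eqref{eq:qNoether_XY_def}, i.e. on the elements $e_k^{(i)}$.

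First I would establish, by induction on $i$, the scaling law
\begin{equation}\label{eq:alpha_scaling}
\al_r\big(e_k^{(i)}\big)=r^{c_i}\,e_k^{(i)},\qquad\forall k\in\iv{0}{n-i},
\end{equation}
for suitable integers $c_i$ with $c_0=0$ and $c_1=1$ (the latter because each $t_j$ is, by \eqref{eq:PX}, a $\K(x_1,\ldots,x_n)$-linear combination of the $y_j$, and $\al_r(y_j)=r y_j$; while $\al_r$ fixes every $e_k=e_k^{(0)}$). The recursion \eqref{eq:eki_def} expresses $e_k^{(i)}$ as a sum of products of one factor $e_\bullet^{(i-2)}$ and two factors $e_\bullet^{(i-1)}$, so \eqref{eq:alpha_scaling} propagates with $c_i=c_{i-2}+2c_{i-1}$; note this is exactly the recursion \eqref{eq:a_recursion}, so in fact $c_i=a_i$. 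Crucially, all three terms on the right of \eqref{eq:eki_def} scale by the \emph{same} power $r^{a_i}$, so $\al_r$ really does act diagonally on each $e_k^{(i)}$ — this is the one place where a small computation (checking the three monomials have matching total $(i-1)$- and $(i-2)$-degrees) is needed, and it is the analogue of the $q$-scaling bookkeeping already carried out in \eqref{eq:e_things}.

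From \eqref{eq:alpha_scaling} and \eqref{eq:qNoether_XY_def} we get $\al_r(X_i)=r^{a_{i-1}}X_i$ and $\al_r(Y_j)=r^{a_j}Y_j$ (reading off $X_i=e_{n-i+1}^{(i-1)}$ and $Y_j=e_0^{(j)}$). Feeding this into \eqref{eq:qNoether_hatXhatY_def}: $\al_r(\widehat X_1)=\al_r(X_1)=X_1=\widehat X_1$ since $a_0=0$, and for $i\ge 2$, $\al_r(\widehat X_i)=r^{(-1)^i a_{i-1}+(-1)^{i+1}a_{i-1}}\widehat X_i=\widehat X_i$; similarly $\al_r(\widehat Y_1)=r^{a_1}\widehat Y_1=r\widehat Y_1$, $\al_r(\widehat Y_2)=r^{-2a_1+a_2}\widehat Y_2=r^0\widehat Y_2=\widehat Y_2$ (as $a_2=2$), and $\al_r(\widehat Y_j)=r^{-a_{j-2}-2a_{j-1}+a_j}\widehat Y_j=\widehat Y_j$ for $j\ge 3$ by \eqref{eq:a_recursion}. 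Thus $\al_r$ fixes every $\widehat X_k$ and every $\widehat Y_k$ except $\widehat Y_1$, which it scales by $r$. Applying $g$ and using $g(\widehat X_k)=x_k$, $g(\widehat Y_k)=y_k$ — which holds because $g$ is the inverse of \eqref{eq:qNoether_isomorphism} and that map sends $x_k,y_k$ to $\widehat X_k,\widehat Y_k$ — we obtain $(g\circ\al_r\circ g^{-1})(x_k)=x_k$ for all $k$ and $(g\circ\al_r\circ g^{-1})(y_k)=r^{\delta_{1k}}y_k$, which is precisely $\be_r$. Since $\K_q(\bar x,\bar y)$ is generated as a skew field by the $x_k,y_k$, the two automorphisms agree, proving \eqref{eq:alpha_beta}. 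The main obstacle is purely the verification that all monomials in \eqref{eq:eki_def} carry the same $\al_r$-weight — but this is forced by the same homogeneity that makes the $q$-commutation analysis work, so no genuinely new difficulty arises.
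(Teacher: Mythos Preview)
Your proposal is correct and follows precisely the approach the paper indicates: the paper does not give a detailed argument but says the lemma is verified ``similarly to how one proves the commutation relations $[\widehat{X}_1,\widehat{X}_j]=0$, $\widehat{X}_1\widehat{Y}_j\widehat{X}_1^{-1}=q^{-\delta_{1j}}\widehat{Y}_j$'', and those relations were obtained exactly by propagating the recursion \eqref{eq:a_recursion} through the definitions \eqref{eq:eki_def}--\eqref{eq:qNoether_hatXhatY_def}, which is what you do. Indeed, conjugation by $\widehat{X}_1=e_n=x_1\cdots x_n$ \emph{is} the automorphism $\al_{q^{-1}}$, so the paper's hint is literally the special case $r=q^{-1}$ of your computation; your weight-tracking argument $\al_r(e_k^{(i)})=r^{a_i}e_k^{(i)}$ is the natural generalization and all the cancellations you record (via $a_j=2a_{j-1}+a_{j-2}$) are correct.
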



\section{The \texorpdfstring{$q$}{q}-difference Noether problem for classical Weyl
groups}\label{section-weyl} Let $W(B_n)=W(C_n)=S_n\ltimes
(\Z/2\Z)^n$ be the Weyl group of type $B_n$ (equivalently, of type
$C_n$). The group $W(B_n)$ acts naturally on $\K_q(\bar x,\bar
y)\simeq \mathcal{O}_q(\Bbbk^{2n})$ by
\begin{subequations}\label{eq:Wm_action}
\begin{gather}
\label{eq:Sm_action}
\zeta(x_i)=x_{\zeta(i)},\quad \zeta(y_i)=y_{\zeta(i)},
\quad \forall\zeta\in S_n,\;\forall i\in\iv{1}{n},\\
\label{eq:Em_action}
\alpha(x_i)=(-1)^{\alpha_i}x_i,\quad \alpha(y_i)=(-1)^{\alpha_i}y_i,
\quad \forall\alpha\in(\Z/2\Z)^n,\;\forall i\in\iv{1}{n}.
\end{gather}
\end{subequations}
Let $\mathcal{E}_n=\{\al=(\al_1,\ldots,\al_n)\in (\Z/2\Z)^n\mid \al_1+\cdots+\al_n=0\}$
 and $W(D_n)=S_n\ltimes\mathcal{E}_n$ be the
Weyl group of type $D_n$.


In Theorem \ref{thm:qNoether_main_result} we solved the q-difference Noether problem for the Weyl group of type $A_n$.
In this section we will show that the other cases ($B_n, C_n, D_n$) can be reduced to that case.
First note that by replacing $y_i$ by $x_iy_i$ in $\K_q(\bar x,\bar y)$ we can, and will, assume that $(\Z/2\Z)^n$ fixes $y_i$ for all $i$, so that \eqref{eq:Em_action} is replaced by
\begin{equation}
\alpha(x_i)=(-1)^{\alpha_i}x_i,\quad \alpha(y_i)=y_i,
\quad \forall\alpha\in(\Z/2\Z)^n,\;\forall i\in\iv{1}{n}.
\end{equation}

We start with the case $B_n$, which is the easiest.
\begin{thm}\label{thm:qNoether_B}
The $q$-difference Noether problem for the Weyl group of type $B_n$ as a positive solution.
More precisely, there exist $\K$-algebra isomorphisms
\begin{equation}\label{eq:qNoether_B}
\K_q(\bar x,\bar y)^{W(B_n)} \simeq \K_{q^2}(\bar x,\bar y)^{S_n}\simeq \K_{q^2}(\bar x,\bar y).
\end{equation}
\end{thm}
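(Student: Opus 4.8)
The plan is to reduce the $W(B_n)$-invariants to $S_n$-invariants in a quantum affine space with parameter $q^2$, and then invoke Theorem~\ref{thm:qNoether_main_result} for the second isomorphism in \eqref{eq:qNoether_B}. The key observation is that, having normalized the $(\Z/2\Z)^n$-action so that it fixes all $y_i$ and sends $x_i\mapsto (-1)^{\al_i}x_i$, the subfield $\K_q(\bar x,\bar y)^{(\Z/2\Z)^n}$ is generated over $\K$ by the squares $\xi_i:=x_i^2$ together with the $y_i$. Indeed, a monomial $x_1^{a_1}\cdots x_n^{a_n}y_1^{b_1}\cdots y_n^{b_n}$ (written in a fixed normal form) is fixed by all of $(\Z/2\Z)^n$ if and only if every $a_i$ is even, and an arbitrary element of the skew field is a ratio of $\K$-linear combinations of such monomials, so taking the $(\Z/2\Z)^n$-average kills all terms with some $a_i$ odd. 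Hence $\K_q(\bar x,\bar y)^{(\Z/2\Z)^n}=\Frac\big(\K\langle \xi_1,\ldots,\xi_n,y_1,\ldots,y_n\rangle\big)$ where the generators satisfy $y_i\xi_j=q^{2\delta_{ij}}\xi_jy_i$, $[\xi_i,\xi_j]=[y_i,y_j]=0$. This is exactly a presentation of $\mathcal{O}_{q^2}(\Bbbk^{2n})$ (after the harmless relabelling identifying $\xi_i$ with the first tensor factor's generator and $y_i$ with the second), so we obtain a $\K$-algebra isomorphism
\[
\K_q(\bar x,\bar y)^{(\Z/2\Z)^n} \;\simeq\; \K_{q^2}(\bar x,\bar y).
\]

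Next I would check that this isomorphism is $S_n$-equivariant. The residual $S_n$-action on $\K_q(\bar x,\bar y)^{(\Z/2\Z)^n}$ is by permutation of indices (since $\zeta(x_i^2)=x_{\zeta(i)}^2$ and $\zeta(y_i)=y_{\zeta(i)}$), which corresponds under the isomorphism above precisely to the standard $S_n$-action \eqref{eq:Sm_action} on $\K_{q^2}(\bar x,\bar y)$. Since $W(B_n)=S_n\ltimes(\Z/2\Z)^n$, taking $S_n$-invariants of both sides gives
\[
\K_q(\bar x,\bar y)^{W(B_n)} = \big(\K_q(\bar x,\bar y)^{(\Z/2\Z)^n}\big)^{S_n} \;\simeq\; \K_{q^2}(\bar x,\bar y)^{S_n},
\]
which is the first isomorphism in \eqref{eq:qNoether_B}. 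The second isomorphism $\K_{q^2}(\bar x,\bar y)^{S_n}\simeq \K_{q^2}(\bar x,\bar y)$ is then immediate from Theorem~\ref{thm:qNoether_main_result} applied with $q$ replaced by $q^2$ (valid since $q$, hence $q^2$, is a nonzero element of the characteristic-zero field $\K$).

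The step I expect to require the most care is the first one: verifying that $\K_q(\bar x,\bar y)^{(\Z/2\Z)^n}$ is genuinely generated by the $x_i^2$ and $y_i$ as a \emph{skew field}, i.e. that every invariant element is a noncommutative rational expression in these $2n$ elements. Passing through the Laurent polynomial ring $\K\langle x_i^{\pm1},y_i^{\pm1}\rangle$ (whose skew field of fractions is the same) makes the averaging argument over $(\Z/2\Z)^n$ transparent, since there the $(\Z/2\Z)^n$-fixed part has an obvious monomial $\K$-basis indexed by $(a,b)\in(2\Z)^n\times\Z^n$. One also needs $|(\Z/2\Z)^n|=2^n$ invertible in $\K$, which holds in characteristic zero, so that $\Frac(A^{(\Z/2\Z)^n})=\Frac(A)^{(\Z/2\Z)^n}$ by the standard fact recalled in the Preliminaries. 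The remaining bookkeeping — matching commutation relations and confirming $S_n$-equivariance — is routine.
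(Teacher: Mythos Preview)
Your proposal is correct and follows essentially the same route as the paper: identify $\K_q(\bar x,\bar y)^{(\Z/2\Z)^n}$ with $\K_{q^2}(\bar x,\bar y)$ via $x_i\mapsto x_i^2$, $y_i\mapsto y_i$ using a monomial-basis/averaging argument, observe that this isomorphism is $S_n$-equivariant, and then invoke Theorem~\ref{thm:qNoether_main_result} with $q$ replaced by $q^2$. The paper phrases the first step at the level of the (Laurent) polynomial algebra before passing to fractions, while you work directly in the skew field, but the content is the same.
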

\begin{proof}
Using that
\[\big\{x_1^{k_1}\cdots x_n^{k_n}\cdot y_1^{k_{n+1}}\cdots y_n^{k_{2n}}\mid k\in\Z^{2n}\big\}\]
is a $\K$-basis for $\K_q[\bar x,\bar y]$ it is easy to see that there is an isomorphism
of $\K$-algebras
\begin{align*}
 \K_{q^2}[\bar x,\bar y]\overset{\sim}{\longrightarrow} & \K_q[\bar x,\bar y]^{(\Z/2\Z)^n}
\intertext{given by}
 x_i \longmapsto & x_i^2, \\
 y_i \longmapsto & y_i.
\end{align*}
Taking skew field of fractions on both sides, followed by taking $S_n$-invariants we obtain that
\[\K_{q^2}(\bar x,\bar y)^{S_n}
\simeq
\Big(\K_q(\bar x,\bar y)^{(\Z/2\Z)^n}\Big)^{S_n}=
\K_q(\bar x,\bar y)^{W(B_n)},
 \]
 which together with \eqref{eq:qNoether_isomorphism} proves \eqref{eq:qNoether_B}.
\end{proof}

For the remaining type $D_n$ case, we need the following lemma.
\begin{lem} \label{lem:qNoether_D_lem1}
The algebra $\K_q(\bar x,\bar y)^{W(D_n)}$ is free as a left $\K_q(\bar x,\bar y)^{W(B_n)}$-module
with basis $\{1, x_1x_2\cdots x_n\}$.
\end{lem}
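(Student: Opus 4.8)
The key structural fact is that $W(D_n)$ is an index-two subgroup of $W(B_n)$: indeed $W(B_n)=S_n\ltimes(\Z/2\Z)^n$ and $W(D_n)=S_n\ltimes\mathcal{E}_n$ where $\mathcal{E}_n$ has index two in $(\Z/2\Z)^n$, the quotient being detected by the parity homomorphism $\varepsilon:(\Z/2\Z)^n\to\Z/2\Z$, $\al\mapsto\al_1+\cdots+\al_n$. Let $\sigma\in W(B_n)$ be any element with $\varepsilon(\sigma)=1$, for instance $\sigma=(1,0,\ldots,0)\in(\Z/2\Z)^n$, so that $\sigma(x_1)=-x_1$ and $\sigma$ fixes the other generators (recall we have normalized the action so $(\Z/2\Z)^n$ fixes all $y_i$). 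Then $W(B_n)=W(D_n)\sqcup \sigma W(D_n)$, and by Galois theory for skew fields (or directly) $\K_q(\bar x,\bar y)^{W(D_n)}$ is a free left module of rank $2$ over $\K_q(\bar x,\bar y)^{W(B_n)}$. The plan is to exhibit an explicit basis, namely $\{1,\,x_1x_2\cdots x_n\}$.

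First I would check that $P:=x_1x_2\cdots x_n$ lies in $\K_q(\bar x,\bar y)^{W(D_n)}$. Since the $x_i$ pairwise commute and each $y_j$ commutes with all $x_i$ except $y_jx_j=q^{-1}x_jy_j$ (after the normalization in this section, $y_ix_i = q^{\delta_{ij}}x_iy_i$, so let me just say: $P$ is a monomial in the $x$'s, hence commutes with all $x_i$), $P$ is a well-defined element; it is fixed by $S_n$ because it is the product of all the $x_i$, and it is fixed by any $\al\in\mathcal{E}_n$ because $\al(P)=(-1)^{\al_1+\cdots+\al_n}P=P$. On the other hand $\sigma(P)=-P$, so $P$ witnesses the nontriviality of the extension: $P\notin \K_q(\bar x,\bar y)^{W(B_n)}$, since any element of $\K_q(\bar x,\bar y)^{W(B_n)}$ is $\sigma$-fixed.

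Next, for freeness and spanning: given any $u\in\K_q(\bar x,\bar y)^{W(D_n)}$, decompose it under the order-two action of $\bar\sigma:=\sigma|_{\K_q(\bar x,\bar y)^{W(D_n)}}$ (which is an automorphism of order dividing $2$ of the skew field $\K_q(\bar x,\bar y)^{W(D_n)}$, normalizing $W(D_n)$ inside $W(B_n)$). Since $2$ is invertible in $\K$, write $u = u_+ + u_-$ with $u_\pm = \tfrac12(u\pm\bar\sigma(u))$. Then $u_+$ is fixed by $\sigma$ and by $W(D_n)$, hence by all of $W(B_n)$, so $u_+\in\K_q(\bar x,\bar y)^{W(B_n)}$. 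For $u_-$, note $\bar\sigma(u_-)=-u_-$; I claim $u_- P^{-1}\in\K_q(\bar x,\bar y)^{W(B_n)}$. Indeed $u_-P^{-1}$ is fixed by $W(D_n)$ (both factors are), and $\sigma(u_-P^{-1})=(-u_-)(-P)^{-1}=u_-P^{-1}$, so it is $W(B_n)$-invariant. Here one must be slightly careful about left versus right multiplication — $P$ is central up to the $q$-twist only with respect to the $y$'s — so I would instead write $u_- = (u_-P^{-1})\cdot P$ and verify that $u_-P^{-1}$ commutes appropriately, or simply define the coefficient as a left coefficient $c := u_- \cdot P^{-1}$ and check $c\in\K_q(\bar x,\bar y)^{W(B_n)}$ as above; this gives $u = u_+ + c\,P$ with $u_+, c\in\K_q(\bar x,\bar y)^{W(B_n)}$, proving $\{1,P\}$ spans. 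Linear independence over $\K_q(\bar x,\bar y)^{W(B_n)}$ is then immediate: if $a\cdot 1 + b\cdot P = 0$ with $a,b\in\K_q(\bar x,\bar y)^{W(B_n)}$, applying $\sigma$ gives $a - bP = 0$, whence $2bP=0$ and $2a=0$, so $a=b=0$ since $P\neq0$ and $2$ is invertible.

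The only real subtlety — and the step I expect to need the most care — is the handling of sides (left vs. right module structure) when dividing by $P$, since $P$ does not commute with the $y_i$; one resolves this by consistently working with, say, left coefficients and noting that conjugation by $P$ acts on $\K_q(\bar x,\bar y)^{W(B_n)}$ in a controlled way (by a power of $q$ on the relevant generators), or by observing that $P^2$ is central enough that the two-sided issues wash out. Everything else is a routine invariant-theory computation.
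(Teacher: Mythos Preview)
Your proof is correct and follows essentially the same approach as the paper: both use the order-two automorphism $\sigma=(1,0,\ldots,0)\in W(B_n)\setminus W(D_n)$ to split $\K_q(\bar x,\bar y)^{W(D_n)}$ into $\pm 1$ eigenspaces, identify the $+1$ eigenspace as $\K_q(\bar x,\bar y)^{W(B_n)}$, and use that $P=x_1\cdots x_n$ is an invertible element of the $-1$ eigenspace to conclude that this eigenspace equals $\K_q(\bar x,\bar y)^{W(B_n)}\cdot P$. Your concern about left/right issues is unnecessary here: the argument $u_-=(u_-P^{-1})\cdot P$ with $u_-P^{-1}$ checked to be $W(B_n)$-invariant uses only that $P$ is invertible in the skew field and that $\sigma$ is a ring automorphism, not any commutativity of $P$ with the $y_i$.
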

\begin{proof}
We must prove that
\begin{equation}\label{eq:qNoether_D_lem1}
\K_q(\bar x,\bar y)^{W(B_n)} \oplus \K_q(\bar x,\bar y)^{W(B_n)}\cdot x_1x_2\cdots x_n =
\K_q(\bar x,\bar y)^{W(D_n)}
\end{equation}
Let $\ga\in W(B_n)$ be a representative for the nontrivial element in
$W(B_n)/W(D_n)\simeq \Z/2\Z$. For example we may take $\ga=(1,0,\ldots,0)\in (\Z/2\Z)^n\subseteq W(B_n)$.
Then $\ga$ acts as an order two $\K$-algebra automorphism of
$\K_q(\bar x,\bar y)^{W(D_n)}$. By polarization, we get a decomposition of
$\K_q(\bar x,\bar y)^{W(D_n)}$ into $\pm 1$ eigenspaces. The $+1$ eigenspace of $\ga$ is obviously equal to
$\K_q(\bar x,\bar y)^{W(B_n)}$. Since $x_1x_2\cdots x_n$ belongs to the $-1$ eigenspace and is invertible,
it is easy to see that the $-1$ eigenspace of $\ga$ equals
\[ \K_q(\bar x,\bar y)^{W(B_n)}\cdot x_1x_2\cdots x_n.\]
 This proves \eqref{eq:qNoether_D_lem1}.
\end{proof}

We are now ready to prove the following.
\begin{thm} \label{thm:qNoether_D}
 The $q$-difference Noether problem for the Weyl group $W_n=W(D_n)$ of type $D_n$ has a positive solution.
Explicitly, there exists a $\K$-algebra isomorphism
\begin{equation}
\K_q(\bar x,\bar y)^{W_n}\simeq \Frac\big( \K_q[x,y]\otimes_\K \K_{q^2}[x,y]^{\otimes_\K (n-1)}\big).
\end{equation}
\end{thm}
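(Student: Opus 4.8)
The plan is to reduce the type $D_n$ case to the (already solved) type $B_n$ case via the rank-two decomposition of Lemma~\ref{lem:qNoether_D_lem1}, keeping track of the single extra generator $\Pi:=x_1x_2\cdots x_n$ through the chain of isomorphisms with the help of Lemma~\ref{lem:alpha_beta}. Adopt the convention (as in the paragraph preceding Theorem~\ref{thm:qNoether_B}) that $(\Z/2\Z)^n$ fixes each $y_i$, so that $y_ix_j=q^{\delta_{ij}}x_jy_i$ still holds. Put $A:=\K_q(\bar x,\bar y)^{W(B_n)}$. Then $\Pi$ is $W(D_n)$-invariant, $\Pi^2=x_1^2\cdots x_n^2\in A$, and (as in the proof of Lemma~\ref{lem:qNoether_D_lem1}, since $\ga$ sends $\Pi\mapsto-\Pi$ and fixes $A$ pointwise) conjugation by $\Pi$ restricts to a $\K$-algebra automorphism $\theta$ of $A$; an easy computation gives $\theta(x_i)=x_i$, $\theta(y_i)=q^{-1}y_i$. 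By Lemma~\ref{lem:qNoether_D_lem1} the skew field $\K_q(\bar x,\bar y)^{W(D_n)}=A\oplus A\Pi$ is exactly the algebra obtained from $A$ by adjoining $\Pi$ with the relations $\Pi a=\theta(a)\Pi$ ($a\in A$) and $\Pi^2=x_1^2\cdots x_n^2$, i.e.\ it is $A[u;\theta]/(u^2-x_1^2\cdots x_n^2)$.

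The key step is to transport this picture along the isomorphism $\Phi\colon A\to\K_{q^2}(\bar x,\bar y)$ of Theorem~\ref{thm:qNoether_B}, which by construction factors as $A\to\K_{q^2}(\bar x,\bar y)^{S_n}\xrightarrow{\,g\,}\K_{q^2}(\bar x,\bar y)$, where the first map sends $x_i^2\mapsto x_i$, $y_i\mapsto y_i$ and $g$ is the inverse of the $S_n$-Noether isomorphism of Theorem~\ref{thm:qNoether_main_result} taken at parameter $q^2$. Under the first map $\Pi^2=x_1^2\cdots x_n^2$ goes to the top elementary symmetric function $e_n$; since $\widehat{X}_1=X_1=e_n^{(0)}=e_n$ by \eqref{eq:qNoether_XY_def}--\eqref{eq:qNoether_hatXhatY_def}, we have $g(e_n)=x_1$, hence $\Phi(\Pi^2)=x_1$. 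Under the same first map, $\theta$ becomes the automorphism $x_i\mapsto x_i$, $y_i\mapsto q^{-1}y_i$ of $\K_{q^2}(\bar x,\bar y)^{S_n}$, which is the restriction of $\al_{q^{-1}}$; so by Lemma~\ref{lem:alpha_beta}, $\Phi\circ\theta\circ\Phi^{-1}=\be_{q^{-1}}$, the automorphism of $\K_{q^2}(\bar x,\bar y)$ sending $y_1\mapsto q^{-1}y_1$ and fixing $x_1,\ldots,x_n,y_2,\ldots,y_n$.

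Combining the last two steps, $\K_q(\bar x,\bar y)^{W(D_n)}$ is isomorphic to the $\K$-algebra $E$ generated by $\K_{q^2}(\bar x,\bar y)$ and an element $u$ with $ub=\be_{q^{-1}}(b)u$ for $b\in\K_{q^2}(\bar x,\bar y)$ and $u^2=x_1$. Reading off the relations, $y_1u=quy_1$, $u$ commutes with $x_2,y_2,\ldots,x_n,y_n$, and $x_1=u^2$; hence the subalgebra $Q'\subseteq E$ generated by $u,y_1,x_2,y_2,\ldots,x_n,y_n$ is a homomorphic image of $Q:=\K_q[x,y]\otimes_\K\K_{q^2}[x,y]^{\otimes_\K(n-1)}$ (the first tensor factor mapping onto $\K\langle u,y_1\rangle$), and $\Frac(Q')=E$ because $\Frac(Q')$ contains $x_1=u^2$ and $y_1$, hence all of $\K_{q^2}(\bar x,\bar y)$, and also $u$. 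As in the proof of Theorem~\ref{thm:qNoether_main_result}, the homomorphism $Q\to Q'$ extends to $\Frac(Q)\to E$ (the generator images being invertible), and since $\Frac(Q)$ is a skew field and the map is onto $\Frac(Q')=E$, it is an isomorphism. Therefore $\K_q(\bar x,\bar y)^{W_n}\cong E\cong\Frac\big(\K_q[x,y]\otimes_\K\K_{q^2}[x,y]^{\otimes_\K(n-1)}\big)$, as claimed.

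The main obstacle is the bookkeeping of the second paragraph: one has to be confident both that $\Pi^2$ maps to $x_1$ and that conjugation by $\Pi$ corresponds to the elementary scaling $\be_{q^{-1}}$ under the composite of the type $B_n$ and type $A_n$ isomorphisms, and that the normalization $y_i\mapsto x_iy_i$ of the $(\Z/2\Z)^n$-action is harmless throughout. Lemma~\ref{lem:alpha_beta} is precisely what makes the second point accessible, since a priori we have no control over the effect of the $S_n$-Noether isomorphism on $\al_{q^{-1}}$; the remaining verifications (the relations in $E$, surjectivity onto a skew field) follow the routine pattern used repeatedly above.
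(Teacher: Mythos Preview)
Your proof is correct and follows the same route as the paper: both use Lemma~\ref{lem:qNoether_D_lem1}, Theorem~\ref{thm:qNoether_B}, and Lemma~\ref{lem:alpha_beta} to track $\Pi$ and $\Pi^2$ through the type-$B_n$ isomorphism, the only difference being that the paper writes the map $\K_q(\bar x,\bar y)^{W(D_n)}\to\Frac(Q)$ directly (via an embedding $k:\K_{q^2}(\bar x,\bar y)\hookrightarrow\Frac(Q)$ with $x_1\mapsto x_1^2$, then $\Pi\mapsto x_1$) and checks it is a homomorphism, whereas you transport the Ore-extension description $A[u;\theta]/(u^2-\Pi^2)$ and then identify the resulting $E$ with $\Frac(Q)$. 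One small point: ``the generator images being invertible'' does not by itself justify extending $Q\to E$ to $\Frac(Q)\to E$; you need $Q\to E$ injective, but this is immediate from the decomposition $E=\K_{q^2}(\bar x,\bar y)\oplus\K_{q^2}(\bar x,\bar y)\,u$ already established.
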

\begin{proof}

The isomorphism $g=g_2\circ g_1$ where
$g_1:\K_q(\bar x,\bar y)^{W(B_n)}\overset{\sim}{\to} \K_{q^2}(\bar x,\bar y)^{S_n}$
and $g_2:\K_{q^2}(\bar x,\bar y)^{S_n}\overset{\sim}{\to} \K_{q^2}(\bar x,\bar y)$,
obtained in the proof of Theorem~\ref{thm:qNoether_B}, satisfies
$g(x_1^2x_2^2\cdots x_n^2)=x_1$. We also have a $\K$-algebra monomorphism
\begin{align*}
k:\K_{q^2}(\bar x,\bar y)\hookrightarrow &\Frac\big(\K_q[x_1,y_1]\otimes_\K \K_{q^2}[x_2,y_2]
\otimes_\K\cdots \otimes_\K \K_{q^2}[x_n,y_n]\big),\\
x_1\mapsto & x_1^2,\\
x_i\mapsto & x_i,\quad\forall i\in\iv{2}{n},\\
y_i\mapsto & y_i,\quad\forall i\in\iv{1}{n}.
\end{align*}
Similarly to Lemma \ref{lem:qNoether_D_lem1} we have a direct sum decomposition
\begin{equation}\label{eq:qNoether_D_eq1}
\Frac\big(\K_q[x_1,y_1]\otimes_\K \K_{q^2}[x_2,y_2]
\otimes_\K\cdots \otimes_\K \K_{q^2}[x_n,y_n]\big) = \im k \oplus (\im k)\cdot x_1.
\end{equation}
Using Lemma \ref{lem:qNoether_D_lem1}, we now define
\begin{equation}
f:\K_q(\bar x,\bar y)^{W(D_n)}\longrightarrow  \Frac\big(\K_q[x_1,y_1]\otimes_\K \K_{q^2}[x_2,y_2]
\otimes_\K\cdots \otimes_\K \K_{q^2}[x_n,y_n]\big)
\end{equation}
by
\begin{equation}
f(a+b\cdot x_1x_2\cdots x_n) = (k\circ g)(a)+(k\circ g)(b)\cdot x_1,\quad\forall a,b\in\K_q(\bar x,\bar y)^{W(B_n)}.
\end{equation}
By \eqref{eq:qNoether_D_eq1}, $f$ is a surjective map. Furthermore, the restriction of $f$
to $\K_q(\bar x,\bar y)^{W(B_n)}$ is a homomorphism and
 $x_1^2=f\big((x_1x_2\cdots x_n)^2\big)$.
Thus, to prove that $f$ is
a homomorphism it is thus enough to show that
\begin{equation}\label{eq:qNoether_D_eq2}
(k\circ g)\big(x_1x_2\cdots x_n \cdot a\cdot (x_1x_2\cdots x_n)^{-1}\big) = x_1\cdot (k\circ g)(a)\cdot x_1^{-1},\quad\forall a\in\K_q(\bar x,\bar y)^{W(B_n)}.
\end{equation}
Recall the automorphisms $\al_r,\be_r$ from Lemma \ref{lem:alpha_beta}. We have
\begin{equation}\label{eq:qNoether_D_xkgx}
 x_1\cdot (k\circ g)(a)\cdot x_1^{-1} = (k\circ \beta_{q^{-1}} \circ g_2\circ g_1)(a).
\end{equation}
By Lemma \ref{lem:alpha_beta}, $\beta_{q^{-1}} = g_2 \circ \al_{q^{-1}} \circ g_2^{-1}$. So
\eqref{eq:qNoether_D_xkgx} equals
\[ (k\circ g_2 \circ \al_{q^{-1}} \circ g_1)(a)=(k\circ g)\big(x_1x_2\cdots x_n\cdot a\cdot (x_1x_2\cdots x_n)^{-1}\big)\]
which proves \eqref{eq:qNoether_D_eq2}.
This proves that $f$ is a surjective $\K$-algebra homomorphism. It is injective since its domain
is a skew-field.
\end{proof}

Theorem~\ref{thm:qNoether_B} and Theorem~\ref{thm:qNoether_D} complete the proof of Theorem~II.

\begin{rem}
We note that a positive solution to the q-difference Noether problem for classical Weyl groups
in the case $q=1$ can be deduced from \cite[Remark~3]{Mi}.
\end{rem}
\section{Reduction via Galois rings}

For the rest of the paper we specialize to $\K=\C$ as ground field,
and assume that $q\in\C\setminus\{0\}$ is not a root of unity.

We use the theory of Galois rings
\cite{FO} to reduce the quantum Gelfand-Kirillov
conjecture to the $q$-difference Noether problem.

\subsection{Galois rings}\label{sec:GaloisRings}

In this subsection, $\Gamma$ denotes an integral domain, $K$ the field of fractions of $\Gamma$,
$K\subseteq L$ a finite Galois extension with Galois group $G=\Gal(L/K)$,
and $\Mscr$ a monoid  acting on $L$ by automorphisms. We will assume that $\Mscr$ is
 $K$-separating, that is $m_1|_K=m_2|_K$ implies $ m_1=m_2$
for $m_1,m_2\in\Mscr$. The group $G$ acts naturally
on $\Mscr$ by conjugations and thus on the skew monoid ring $L\ast \Mscr$ by automorphisms.
We denote the $G$-invariants in $L\ast\Mscr$ by $(L\ast\Mscr)^G$.

If $u=\sum_{m\in\Mscr} a_mm \in L\ast \Mscr$, we put
$\Supp(u)=\{m\in\Mscr\mid a_m\neq 0\}$. For $\varphi\in\Mscr$, let
$\Stab_G(\varphi)$ be the stabilizer subgroup of $G$ at $\varphi$
and $T_\varphi\subseteq G$ be a set of representatives for
$G/\Stab_G(\varphi)$ (the set of orbits of the action of
$\Stab_G(\varphi)$ on $G$ by conjugations). For $a\in L$, put
\begin{equation}\label{eq:GinvElt}
[a\varphi]:=\sum_{g\in T_\varphi} a^g\varphi^g.
\end{equation}
Then $[a \varphi]\in (L\ast \Mscr)^G$, \cite[Lemma~2.1]{FO}.

\begin{dfn}[\cite{FO}, Definition 3]
A finitely generated $\Gamma$-subring $U\subseteq (L\ast\Mscr)^G$
 is called a \emph{Galois $\Gamma$-ring} if $UK=KU=(L\ast\Mscr)^G$.
\end{dfn}

\begin{prp}[\cite{FO}, Proposition 4.1]
\label{prp:GaloisRing}
Suppose $U$ is a $\Gamma$-subring of $(L\ast\Mscr)^G$ generated by
$u_1,\ldots,u_k\in U$. If $\cup_{i=1}^k \Supp(u_i)$ generate $\Mscr$ as a monoid,
then $U$ is a Galois $\Gamma$-ring in $(L\ast\Mscr)^G$.
\end{prp}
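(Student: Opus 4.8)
The goal is to establish the two equalities $UK=KU=(L\ast\Mscr)^G$. Each inclusion "$\subseteq$" is immediate, since $U\subseteq(L\ast\Mscr)^G$ and $(L\ast\Mscr)^G$ is stable under left and right multiplication by $K=L^G$ (if $c\in K$ and $v$ is $G$-invariant then $g(cv)=c^g g(v)=cv$, and likewise on the right). For the reverse inclusion it suffices to show that $[a\varphi]\in KU$ for every $\varphi\in\Mscr$ and every $a\in L$; the corresponding statement for $UK$ follows by the left--right symmetric argument. Indeed, $G$ permutes the blocks of the decomposition $L\ast\Mscr=\bigoplus_{m\in\Mscr}Lm$ according to the conjugation action on $\Mscr$, so $(L\ast\Mscr)^G$ is the direct sum, over the $G$-orbits $G\varphi\subseteq\Mscr$, of the subspaces $\{[a\varphi]\mid a\in L^{\Stab_G(\varphi)}\}$ — here one invokes \cite[Lemma~2.1]{FO} — and hence the elements $[a\varphi]$ span $(L\ast\Mscr)^G$ over $K$.

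So fix $\varphi\in\Mscr$ and $a\in L$. Since $\bigcup_{i=1}^k\Supp(u_i)$ generates $\Mscr$ as a monoid, write $\varphi=\varphi_{j_1}\cdots\varphi_{j_\ell}$ with each $\varphi_{j_t}\in\Supp(u_{i_t})$, and argue by induction on the word length $\ell$. If $\ell=0$ then $\varphi=1$ and $[a\cdot 1]\in K\cdot 1\subseteq KU$. If $\ell\ge 1$, write $\varphi=\psi m$ with $m=\varphi_{j_\ell}\in\Supp(u_i)$ ($i=i_\ell$) and $\psi=\varphi_{j_1}\cdots\varphi_{j_{\ell-1}}$ of length $\ell-1$. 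By the induction hypothesis $[b\psi]\in KU$ for every $b\in L$, and since $U$ is a subring we have $KU\cdot U=K(U\cdot U)=KU$; hence, writing $u_i=\sum_{m'\in\Supp(u_i)}a_{i,m'}m'$ and choosing $e\in T_\psi$,
\begin{equation*}
v_b:=[b\psi]\,u_i=\sum_{g\in T_\psi}\ \sum_{m'\in\Supp(u_i)}b^{\,g}\,\psi^g(a_{i,m'})\,\psi^g m'\ \in\ KU\qquad(b\in L).
\end{equation*}

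The $(g,m')=(e,m)$ summand of $v_b$ equals $b\,\psi(a_{i,m})\,\varphi$, and as $b$ ranges over $L$ the coefficient $b\,\psi(a_{i,m})$ ranges over all of $L$, since $\psi(a_{i,m})\neq 0$. Thus it is enough to \emph{isolate this summand}: to show that the contribution of the $G$-orbit $G\varphi$ to $v_b$ lies not merely in $(L\ast\Mscr)^G$ (where the $K$-linear, $G$-equivariant orbit projection lands it) but in $KU$, and equals $[b\psi(a_{i,m})\varphi]$ up to terms already known to be in $KU$. This is the heart of the argument, and I expect it to be the only real obstacle. To carry it out one would use: (a) the $K$-separating hypothesis on $\Mscr$ together with linear independence of distinct automorphisms of $L$ over $L$ (Dedekind), to identify precisely which pairs $(g,m')$ yield an element $\psi^g m'$ lying in $G\varphi$ and to see that their coefficients can be separated $K$-linearly; (b) an auxiliary induction that disposes of every term $\psi^g m'$ lying outside $G\varphi$, expressing the relevant $[\,\cdot\,\psi^g m']$ through elements of $KU$ furnished by the primary induction hypothesis applied to the proper prefixes $\varphi_{j_1}\cdots\varphi_{j_t}$ ($t<\ell$) and their products with the $u_i$; and (c) varying $b$ over $L$ and left-multiplying the resulting members of $KU$ by scalars from $K$, to solve the finite linear system that extracts $[b\psi(a_{i,m})\varphi]$.

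Running (a)--(c) yields $[a\varphi]\in KU$ for all $a\in L$, which completes the induction on $\ell$ and hence the proof that $KU=(L\ast\Mscr)^G$; the equality $UK=(L\ast\Mscr)^G$ follows by the symmetric argument, multiplying on the left by the generators $u_i$. The reductions in the first two paragraphs and the outer induction are routine; all the genuine difficulty is concentrated in the disentangling step (a)--(c).
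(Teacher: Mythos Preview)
Your reduction in the first paragraph is fine, and the outer induction on word length is a reasonable scaffold. The gap is exactly where you say it is: steps (a)--(c) are asserted, not proved, and in fact the sketch you give does not work as written.

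Two concrete problems. First, in (b) you propose to dispose of each unwanted term $\psi^g m'$ via the induction hypothesis ``applied to the proper prefixes''. But $\psi^g m'$ has word length $\le\ell$ (the generating set $\bigcup_i\Supp(u_i)$ is $G$-stable since each $u_i$ is $G$-invariant, so $\psi^g$ also has length $\le \ell-1$); it is \emph{not} covered by the hypothesis, and there is no well-founded descent here. Second, in (c) you plan to separate orbits by left-multiplying by scalars from $K$ and varying $b$. Left multiplication by $k\in K$ scales every coefficient of $v_b$ by the same $k$, so it cannot distinguish orbits; and varying $b$ only moves you inside the left $L^{\Stab_G(\psi)}$-module $[L^{\Stab_G(\psi)}\psi]\cdot u_i$, which again does not single out the $G\varphi$-block. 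What actually separates orbits is \emph{right} multiplication: for $\gamma\in\Gamma\subseteq K$ one has $[a\varphi]\gamma=[a\,\varphi(\gamma)\,\varphi]$, and by $K$-separation the scalars $\varphi(\gamma)$ detect the orbit $G\varphi$.

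The paper's proof exploits precisely this two-sided structure and avoids your disentangling altogether. It first invokes \cite[Lemma~4.1]{FO} to extract, from each $u_i$, single-orbit elements $[a_m m]$ for every $m\in\Supp(u_i)$ (this lemma already uses right multiplication by $K$). Products of such elements then reach an $[a\varphi]$ with $\varphi$ arbitrary in $\Mscr$. Finally, rather than varying the coefficient directly, one computes $K\cdot\Gamma[a\varphi]\Gamma=[K\,a\,\varphi(\Gamma)\,\varphi]$ and uses the Galois-theoretic identity $K\cdot\varphi(K)=L^{\Stab_G(\varphi)}$ to obtain the whole block $[L^{\Stab_G(\varphi)}\varphi]$. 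This is the missing idea in your outline: once a single $[a\varphi]$ is in hand, the compositum identity produces all coefficients at once, with no linear system to solve.
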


\begin{proof}
Since the proof of \cite[Proposition~4.1]{FO} is rather sketchy we
provide the details for convenience. Consider a $K$-subbimodule
$V=Ku_{1}K+$ $\dots+$ $Ku_{k}K$ in $(L\ast\Mscr)^G$. It follows
from the proof of \cite[Lemma~4.1]{FO} that for any $i$ and any
$m\in \Supp(u_i)$ there exists $a\in L$ such that $[am]\in Ku_iK$.
Thus the bimodule $V$ contains the elements $[a_{1}\vi_{1}],$
$\dots,$ $[a_{t}\vi_{t}]$, where $\vi_{1}^{g},$ $\dots,$
$\vi_{t}^{g},$ $g\in G,$ generate $\Mscr$. Now consider a
subalgebra $U'\subset U$ generated over $\Gamma$ by
$[a_{i}\vi_{i}],$ $i=1, \ldots, t$. Since
$$\Supp([am]\Gamma[a'm'])=\Supp[am]\Supp[a'm'],$$
then given $\vi\in \Mscr$ one can find $a\in L$ such that
$[a\vi]\in U'$. Moreover, $a\in L^{\Stab_G(\varphi)}$. Now we use
the fact that $K\vi(\Gamma)=\vi(K)$ and hence
$$K(\Gamma[a\vi]\Gamma)=[K\Gamma\vi(\Gamma)a\vi]=[K\vi(K)a\vi].$$
Thus  $K(\Gamma[a\vi]\Gamma)=[L^{\Stab_G(\varphi)}\vi]$ and
$KU\simeq (L\ast\Mscr)^G$. Similarly, $UK\simeq (L\ast\Mscr)^G$.
We conclude that $U$ is a Galois $\Gamma$-ring in
$(L\ast\Mscr)^G$.

\end{proof}

\subsection{The center of \texorpdfstring{$U_q(\mathfrak{gl}_N)$}{Uq(glN)}}
\label{sec:center}

It is known that the center $Z_N$ of $U_N=U_q(\mathfrak{gl}_N)$
is generated by the quantum Casimir operators
constructed by Bracken, Gould and Zhang \cite{BGZ} and by the
element $(K_1\ldots K_N)^{-1}$ \cite{Li}. Here we recall
some facts that will be used in later sections.

Let $U_N^0$, (respectively $U_N^\pm$) be the subalgebra of $U_N$
generated by $K_i, K_i^{-1}, i\in\iv{1}{N}$ (respectively
$E_j^\pm, j\in\iv{1}{N-1}$). By the quantum PBW theorem we have
$U_N=U_N^+U_N^0U_N^-$. Thus each $a\in U_N$ can be uniquely
decomposed as $a=a^{(0)}+a'$, where $a^{(0)}\in U_N^0$ and
 $a'\in \sum_j E_j^+U_N + U_NE_j^-$.
The \emph{quantum Harish-Chandra homomorphism} $h_N:Z_N\to U_N^0$ is
defined by $h_N(z)=z^{(0)}$.

Put $\widetilde{K}_i=q^{-i}K_i$. We may regard $U_N^0$ as a
Laurent polynomial algebra in the variables $\widetilde{K}_i$.
Let $W_N=S_N\ltimes\mathcal{E}_N$, the Weyl group of type $D_N$,
act on $U_N^0$ by permutations and sign changes of $\widetilde{K}_i,\; i\in\iv{1}{N}$.
The following lemma give a description of the center of $U_N$.
\begin{lem}\label{lem:image_of_HC}
We have $\C$-algebra isomorphisms 
\begin{equation}\label{eq:image_of_HC}
Z_N\overset{h_N}{\simeq} (U_N^0)^{W_N}\simeq \C[z_1,\ldots,z_{N-1}][z_{N}^{\pm 1}].
\end{equation}
\end{lem}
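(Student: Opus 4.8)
The plan is to prove the isomorphism $Z_N \overset{h_N}{\simeq} (U_N^0)^{W_N}$ first, and then identify $(U_N^0)^{W_N}$ with the stated polynomial algebra. For the first part, the key input is the quantum PBW theorem together with the standard fact (going back to the classical Harish-Chandra isomorphism and its quantum analogue, see e.g. \cite{KS}) that $h_N$ is an algebra homomorphism: if $z\in Z_N$ and we write $z = z^{(0)} + z'$ with $z^{(0)}\in U_N^0$ and $z'\in\sum_j E_j^+ U_N + U_N E_j^-$, then for $z,w\in Z_N$ one checks $(zw)^{(0)} = z^{(0)}w^{(0)}$ using that the "primed" part is a two-sided ideal-like complement and $U_N^0$ normalizes the relevant spaces. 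First I would record that $h_N$ is injective on $Z_N$: this follows because a central element is determined by its action on all Verma modules $M(\lambda)$, and $z$ acts on $M(\lambda)$ by the scalar obtained by evaluating $h_N(z)$ at $\lambda$ (after the $\rho$-shift encoded in the substitution $\widetilde K_i = q^{-i}K_i$), so $h_N(z)=0$ forces $z$ to act as zero on all $M(\lambda)$, hence $z=0$.

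Next I would show $\im h_N \subseteq (U_N^0)^{W_N}$. The classical computation of central characters shows that the central character $\chi_\lambda$ determined by $z\in Z_N$ satisfies $\chi_\lambda = \chi_\mu$ whenever $\lambda,\mu$ are linked by the ($\rho$-shifted, hence with the $\widetilde K_i$ coordinates) action of $W_N$; since the center of $U_q(\mathfrak{gl}_N)$ acts by permutations of the $\widetilde K_i$ and by the sign changes coming from $(K_1\cdots K_N)^{-1}$ being central, the image is invariant under $S_N\ltimes\mathcal E_N = W_N$. Concretely, one can bypass the representation-theoretic argument for half of this by invoking the explicit generators: the quantum Casimirs of Bracken–Gould–Zhang \cite{BGZ} have Harish-Chandra images that are, up to normalization, the power sums $\sum_i \widetilde K_i^{2k}$ together with the element $(K_1\cdots K_N)^{-1}$ whose image is $q^{N(N+1)/2}\prod_i \widetilde K_i^{-1}$; these are manifestly $W_N$-invariant. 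So it suffices to show these generate $(U_N^0)^{W_N}$, which is the content of the second isomorphism.

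For the reverse inclusion $(U_N^0)^{W_N}\subseteq \im h_N$ — equivalently for surjectivity of $h_N$ onto the invariants — I would combine injectivity with a dimension/filtration count: the graded (or lowest-degree) pieces of the images of the known central generators already span enough of $(U_N^0)^{W_N}$. In fact the cleanest route is to directly prove the second isomorphism $(U_N^0)^{W_N}\simeq \C[z_1,\ldots,z_{N-1}][z_N^{\pm 1}]$ and check that the specific central elements map onto a generating set. Writing $U_N^0 = \C[\widetilde K_1^{\pm 1},\ldots,\widetilde K_N^{\pm 1}]$, the subring fixed by $S_N$ acting by permutations is the Laurent-symmetric functions; imposing invariance under $\mathcal E_N$ (even-sign-change) adds nothing beyond what is already symmetric plus the extra invariant $\prod_i \widetilde K_i$, which is invertible. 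A convenient set of algebraically independent generators for $\C[\widetilde K_i^{\pm1}]^{W_N}$ is then: the $N-1$ elements coming from the elementary (or power-sum) symmetric functions of $\widetilde K_i + \widetilde K_i^{-1}$, say $z_1,\ldots,z_{N-1}$, together with $z_N = \prod_i \widetilde K_i$, which is invertible, giving exactly $\C[z_1,\ldots,z_{N-1}][z_N^{\pm1}]$. I would verify algebraic independence by a Jacobian/transcendence-degree argument ($\trdeg = N$) and verify that these generate by an elementary argument on Laurent monomials invariant under the signed permutations.

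The main obstacle I expect is the bookkeeping around the $\rho$-shift and the precise statement that $h_N$ is an algebra map with image contained in $(U_N^0)^{W_N}$ rather than merely in $U_N^0$ — i.e. pinning down exactly which Weyl group action on $U_N^0$ the center is invariant under, and matching it to the type-$D_N$ action $S_N\ltimes\mathcal E_N$ on the $\widetilde K_i$ (as opposed to the full type-$B_N$ hyperoctahedral group, which would be too large because odd sign changes are not central). Here I would lean on the explicit description of the center via \cite{BGZ} and \cite{Li} quoted at the start of the subsection: once one knows $Z_N$ is generated by the Bracken–Gould–Zhang Casimirs and $(K_1\cdots K_N)^{-1}$, computing the Harish-Chandra image of each generator is a finite check, and showing these images generate all of $(U_N^0)^{W_N}$ reduces to the invariant-theory statement of the previous paragraph. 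Injectivity of $h_N$ then upgrades "generate a subring of" to "generate all of", completing both isomorphisms.
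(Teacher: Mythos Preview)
Your proposal has a genuine error in the identification of the invariant ring $(U_N^0)^{W_N}$. The action of $\mathcal{E}_N$ on $U_N^0$ is by \emph{sign changes} $\widetilde K_i\mapsto -\widetilde K_i$, not by inversion $\widetilde K_i\mapsto \widetilde K_i^{-1}$. Consequently your proposed generators, the symmetric functions of $\widetilde K_i+\widetilde K_i^{-1}$, are not $W_N$-invariant at all (for instance $\sum_i(\widetilde K_i+\widetilde K_i^{-1})$ changes sign under the even sign change $\widetilde K_1\mapsto -\widetilde K_1,\ \widetilde K_2\mapsto -\widetilde K_2$). Your sentence ``imposing invariance under $\mathcal E_N$ adds nothing beyond what is already symmetric plus the extra invariant $\prod_i\widetilde K_i$'' is false for the same reason: the $S_N$-invariant $\sum_i\widetilde K_i$ is not $\mathcal E_N$-invariant. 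The correct generators, which the paper uses, are the elementary symmetric polynomials $e_d(\widetilde K_1^2,\ldots,\widetilde K_N^2)$ for $d\in\iv{1}{N-1}$ together with the invertible element $\prod_i\widetilde K_i$. You actually had this right elsewhere in your proposal, where you note that the Harish-Chandra images of the Casimirs are power sums in $\widetilde K_i^2$; the inconsistency should have been a warning.

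Your argument for surjectivity of $h_N$ onto $(U_N^0)^{W_N}$ is also too vague: ``dimension/filtration count'' is not an argument here, and the infinite-dimensional Laurent setting makes naive counts delicate. The paper bypasses this entirely by citing \cite{Li} for the explicit description of $\im h_N$ as the subalgebra generated by $\big((U_N^0)_{\mathrm{ev}}\big)^{S_N}$ and $(K_1\cdots K_N)^{-1}$, and then proving this equals $(U_N^0)^{W_N}$ via a $\pm 1$-eigenspace decomposition under the involution $K_j\mapsto (-1)^{\delta_{1j}}K_j$: the $+1$ eigenspace is exactly $\big((U_N^0)_{\mathrm{ev}}\big)^{S_N}$, and multiplication by $K_1\cdots K_N$ identifies the $-1$ eigenspace with the $+1$ eigenspace. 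This eigenspace trick is the key idea you are missing for the equality $\im h_N=(U_N^0)^{W_N}$.
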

\begin{proof}
Let $(U_N^0)_{\mathrm{ev}}$ denote the subalgebra of $U_N^0$
generated by $K_i^{\pm 2}$, $i\in\iv{1}{N}$. By \cite[Lemma 2.1]{Li}, $h_N$ is injective and its image is generated by
$\big((U_N^0)_{\mathrm{ev}}\big)^{S_N}$ and the element
$I_N^{-1}$, where $I_N:=K_1K_2\cdots K_N$. Note that $\mathcal{E}_N$ fixes $K_i^{\pm 2}$ for all $i\in\iv{1}{N}$ and also fixes $I_N^{-1}$ since there are only an even number of sign changes.
Thus the image of $h_N$ is contained in $(U_N^0)^{W_N}$. For the converse inclusion, one can check that the order two $\C$-algebra automorphism of $U_N^0$ given by $K_j\mapsto (-1)^{\delta_{1j}}K_j$ for $j\in\iv{1}{N}$ preserves the subalgebra $(U_N^0)^{W_N}$. The $+1$ eigenspace of $(U_N^0)^{W_N}$ coincides with $\big((U_N^0)_{\mathrm{ev}}\big)^{S_N}$.
The element $I_N$ belongs to the $-1$ eigenspace of $(U_N^0)^{W_N}$. Multiplying any element of the $-1$ eigenspace by $I_N$ we get an element of the $+1$ eigenspace. Since $I_N$ is invertible, it follows that the $-1$ eigenspace of $(U_N^0)^{W_N}$ is equal to $I_N^{-1}\cdot (U_N^0)_{\mathrm{ev}}$. This proves that the image of $h_N$ equals $(U_N^0)^{W_N}$.

For the second map in \eqref{eq:image_of_HC} we define
\begin{align*}
f:
\C[z_1,\ldots,z_{N-1}][z_N^{\pm 1}]
\longrightarrow & (U_N^0)^{W_N},\\
 z_d \longmapsto & e_d(\widetilde{K}_1^2,\ldots,\widetilde{K}_N^2) ,\quad\forall d\in\iv{1}{N-1},\\
z_N\longmapsto & \widetilde{K}_1\widetilde{K}_2\cdots\widetilde{K}_N,
\end{align*}
where $e_d$ is the elementary symmetric polynomial in $N$ variables of degree $d$.
Since $I_N$ is invertible and $U_N^0$ is commutative, $f$ is a  well-defined $\C$-algebra homomorphism. By the previous paragraph, any element of $(U_N^0)^{W_N}$ can be written as a sum of elements of the form $I_N^{-k}\cdot u$, where $k\in\Z_{\ge 0}$ and $u$ is a symmetric polynomial in $\widetilde{K}_i^2$, $i\in\iv{1}{N}$.
By Newton's theorem and that $f(z_N^2)=e_N(\widetilde{K}_1^2,\ldots,\widetilde{K}_N^2)$, we conclude that $u$, hence $I_N^{-k}\cdot u$ lies in the image of $f$. This proves that $f$ is surjective. To prove that $f$ is injective, it is enough to prove that $f(z_1),\ldots,f(z_N)$ are algebraically independent over $\C$. By applying the involution $K_j\mapsto (-1)^{\delta_{1j}}K_j$ from the previous paragraph, it is enough to prove that $f(z_1),\ldots,f(z_{N-1}), f(z_N)^2$ are algebraically independent, which follows from Newton's theorem.
\end{proof}

\begin{rem}
We note that \cite[Eq. (2.5)]{Li} can be regarded as a
$q$-deformation of a formula of Zhelobenko \cite{Zh}.
\end{rem}

\subsection{Gelfand-Tsetlin modules over \texorpdfstring{$U_q(\mathfrak{gl}_N)$}{Uq(glN)}}

Gelfand-Tsetlin bases for finite-dimensional irreducible
representations of $U_q(\mathfrak{gl}_N)$ were obtained in
\cite{UTS}. Similarly to the classical $U(\mf{gl}_N)$-case, the
bases consist of finite sets of \emph{tableaux}, i.e.
double-indexed families $(\la_{mi})_{1\le i\le m\le N}$ of
integers, satisfying certain conditions. The action of the
generators $E_i^\pm$ and $K_j$ on these tableaux are given by
$q$-analogues of the classical Gelfand-Tsetlin formulas.

Mazorchuk and Turowska \cite{MT} used these formulas to define a
family of $U_q(\mathfrak{gl}_N)$-modules (in fact they used
the algebra obtained from $U_{q^2}(\mathfrak{gl}_N)$
by adjoining $K_j^{\pm 1/2}$, but the results are the same), the so called \emph{generic Gelfand-Tsetlin
modules}, which are always infinite-dimensional and not
necessarily simple. The bases are now parametrized by tableaux
with complex entries $\la=(\la_{mi})_{1\le i\le m\le
N}\in\C^{N(N+1)/2}$. The only restriction on the tableaux is that
they should be \emph{admissible}. By definition, a tableau $\la$
is admissible if $q^{2(k+\la_{mi}-\la_{mj})}\neq 1$ for all
$k\in\Z$ and all $1\le i,j\le m\le N$.




The following theorem gives their construction.
For $x\in\C$ we put
\[[x]_q:=\frac{q^x-q^{-x}}{q-q^{-1}}.\]

\begin{thm}[\cite{MT}, Theorem 2]
\label{MTthm1}
 To each admissible tableau $\la$ there exists a
 $U_q(\mathfrak{gl}_N)$-module $V(\la)$ with basis
 $B(\la)=\big\{[\la+\ga]\mid \ga \in \Z^{N(N-1)/2}]\big\}$
 and action given by
\begin{equation}\begin{aligned}
E_m^\pm [\mu] &= \sum_{i=1}^m a_{mi}^\pm(\mu)[\mu\pm\delta^{mi}],
\qquad m=1,\ldots, N-1,\\
K_m[\mu]&=q^{\sum_{i=1}^m \mu_{mi} -
\sum_{i=1}^{m-1}\mu_{m-1,i}}[\mu], \qquad m=1,\ldots, N,
\end{aligned}\end{equation}
for any $\mu\in B(\la)$, where $\delta^{mi}$ is the
Kronecker tableau given by
$(\delta^{mi})_{kj}=\delta_{mk}\delta_{ij}$ and
\begin{equation}\label{eq:ami_def}
a_{mi}^\pm(\mu):=\mp\frac{\prod_{j=1}^{m\pm 1}[\tmu_{m\pm
1,j}-\tmu_{mi}]_q}{\prod_{j\in\{1,\ldots,m\}\setminus\{i\}}
[\tmu_{mj}-\tmu_{mi}]_q},
\end{equation}
where $\tmu_{mi}:=\mu_{mi}-i$ for all $1\le i\le m\le N$.
\end{thm}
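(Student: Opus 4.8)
The claim is that the formulas displayed define a $U_q(\mathfrak{gl}_N)$-module structure on the $\C$-vector space $V(\la)$ with basis $B(\la)$, and that admissibility of $\la$ is exactly what makes this sensible: the denominator $\prod_{j\neq i}[\tmu_{mj}-\tmu_{mi}]_q$ of $a_{mi}^\pm(\mu)$ in \eqref{eq:ami_def} is nonzero if and only if $q^{2(\tmu_{mj}-\tmu_{mi})}\neq 1$ for all relevant $i,j$, which for $\mu=\la+\ga$ with $\ga\in\Z^{N(N-1)/2}$ is precisely the admissibility condition. The plan is to verify each defining relation of $U_q(\mathfrak{gl}_N)$ by applying it to an arbitrary basis vector $[\mu]$ and checking the result is zero; since $K_m$ acts diagonally and $E_m^\pm$ shifts a single row, each relation unwinds into a finite list of identities among the rational functions $a_{mi}^\pm$.

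First I would dispose of the Cartan relations. $K_iK_i^{-1}=1$ and $[K_i,K_j]=0$ are immediate. For $K_iE_j^\pm K_i^{-1}=q^{\pm(\delta_{ij}-\delta_{i,j+1})}E_j^\pm$ one compares the $K_i$-eigenvalue of $[\mu\pm\delta^{ji}]$ with that of $[\mu]$: replacing $\mu_{ji}$ by $\mu_{ji}\pm1$ changes the exponent $\sum_k\mu_{jk}-\sum_k\mu_{j-1,k}$ attached to $K_j$ by $\pm1$, changes the exponent attached to $K_{j+1}$ by $\mp1$, and leaves the remaining $K_i$ untouched, giving exactly the stated weight. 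The relations $[E_i^\pm,E_j^\pm]=0$ and $[E_i^+,E_j^-]=0$ for $|i-j|>1$ then hold structurally: $E_m^\pm$ changes only row $m$, and $a_{mi}^\pm(\mu)$ depends only on rows $m-1,m,m+1$, so for $|i-j|>1$ the two operators act on disjoint bands of rows and commute verbatim.

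The real content lies in the three remaining families: $[E_i^+,E_i^-]=(K_iK_{i+1}^{-1}-K_{i+1}K_i^{-1})/(q-q^{-1})$, the mixed commutators $[E_i^+,E_{i\pm1}^-]=0$, and the quantum Serre relations at $|i-j|=1$. Applying each side to $[\mu]$ and extracting the coefficient of a fixed target tableau turns these into identities asserting that an explicit $\Z$-combination of products $a^{\pm}_{mi}(\mu)\,a^{\mp}_{m'j}(\mu+\text{shift})$ -- triple products in the Serre case -- vanishes, or, for the diagonal part of $[E_i^+,E_i^-]$, equals $[2\sum_k\mu_{ik}-\sum_k\mu_{i-1,k}-\sum_k\mu_{i+1,k}]_q$. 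Each such identity is a rational-function identity in the variables $q^{\tmu_{kj}}$ over the at most three rows involved, and I would prove it in one of two ways: (a) directly, via partial fractions / $q$-Lagrange interpolation with nodes the $q^{\tmu_{mj}}$; or (b) by density -- the same identities are known to hold whenever $\mu$ is the tableau of a Gelfand-Tsetlin basis vector of a finite-dimensional irreducible $U_q(\mathfrak{gl}_N)$-module by the $q$-Gelfand-Tsetlin formulas \cite{UTS}, and since $q$ is not a root of unity these tableaux specialize the variables to a Zariski-dense set, so an identity valid on all of them is a formal identity, which may then be evaluated at the admissible $\mu$ without meeting a pole.

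The principal obstacle is the quantum Serre relation $(E_i^\pm)^2E_{i\pm1}^\pm-(q+q^{-1})E_i^\pm E_{i\pm1}^\pm E_i^\pm+E_{i\pm1}^\pm(E_i^\pm)^2=0$: unwinding a triple composition forces one to keep track of which iterated shifts $\delta^{mi}$ return to a common target tableau, and the resulting coefficient identities involve bushy triple products of the $a^\pm$, far heavier than in the other cases. Route (b) neutralizes this, since the relation is already verified on finite-dimensional modules; I would nonetheless cross-check the $[E_i^+,E_i^-]$ relation by the direct route (a), as its diagonal part must reproduce the Harish-Chandra eigenvalue exactly. A closing bookkeeping remark records that every denominator encountered is one of the $[\tmu_{mj}-\tmu_{mi}]_q$ with $1\le i,j\le m\le N$, so admissibility of $\la$ is not merely sufficient but precisely the hypothesis under which $V(\la)$ is well defined.
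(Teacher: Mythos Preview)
The paper does not prove this theorem: it is stated as a citation of \cite[Theorem~2]{MT}, with only the one-line remark that admissibility guarantees the denominators are nonzero. There is therefore no ``paper's own proof'' to compare against.

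Your sketch is a correct and standard verification. The structural reductions (Cartan relations; disjoint-band commutativity for $|i-j|>1$) are fine as written. For the substantive relations, your route~(b) --- deduce the rational-function identities from their validity on finite-dimensional Gelfand--Tsetlin bases via Zariski density in the parameters $q^{\tmu_{kj}}$ --- is the efficient path and is exactly how one avoids the bookkeeping explosion in the Serre relations. One small caution: for the density argument you should phrase the identities as polynomial identities in the Laurent variables $X_{kj}=q^{\tmu_{kj}}$ after clearing denominators, so that Zariski density applies cleanly; also note that the top row $\mu_{N\bullet}$ is fixed (no $\Z$-shifts there), so the variables $X_{Nj}$ are genuine parameters rather than lattice-shifted, but this does not affect the argument since the identities are still polynomial in them. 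With that in place your outline is complete.
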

Note that the denominator in \eqref{eq:ami_def} is always nonzero since $\la$ is admissible.
The following result will also be used.

\begin{thm}[\cite{MT}, in Proof of Theorem 4]
\label{MTthm2} The intersection of all annihilators of the
$U_q(\mathfrak{gl}_N)$-modules $V(\mu)$ as $\mu$ ranges over all
admissible tableaux, is zero.
\end{thm}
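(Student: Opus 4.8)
The plan is to reduce the statement to a routine density/separation argument for Gelfand-Tsetlin modules. Recall that by the quantum PBW theorem each $a\in U_N$ decomposes uniquely as $a=a^{(0)}+a'$ with $a^{(0)}\in U_N^0$ and $a'\in\sum_j E_j^+U_N+U_NE_j^-$. I would first isolate the generators: it suffices to show that for every nonzero $a\in U_N$ there is an admissible tableau $\mu$ such that $V(\mu)$ is not annihilated by $a$, since then $\bigcap_\mu\Ann V(\mu)=0$ immediately.

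First I would treat the case $a\in U_N^0$. Writing $a$ as a Laurent polynomial in $\widetilde{K}_1,\ldots,\widetilde{K}_N$, the action of $K_m$ on a basis vector $[\mu]$ is by the scalar $q^{\sum_{i=1}^m\mu_{mi}-\sum_{i=1}^{m-1}\mu_{m-1,i}}$; these exponents can be made to take any prescribed complex values by choosing the entries $\mu_{mi}$ appropriately (subject only to admissibility, which is a generic, hence non-obstructing, condition). So if $a$ acted as zero on every $V(\mu)$, the Laurent polynomial $a$ would vanish at infinitely many points of $(\C^\times)^N$ in ``general position'', forcing $a=0$. Hence the restriction of $\bigcap_\mu\Ann V(\mu)$ to $U_N^0$ is zero.

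For the general case, I would use a triangularity/weight argument. The operators $E_j^+$ (resp.\ $E_j^-$) shift a basis vector $[\mu]$ to combinations of $[\mu\pm\delta^{ji}]$, i.e.\ they raise (resp.\ lower) the weight, while $U_N^0$ preserves weights. Thus for a nonzero $a=a^{(0)}+a'$, pick a basis vector $v=[\mu]$ in a module $V(\mu)$ (with $\mu$ chosen so that $a^{(0)}$ does not vanish there, if $a^{(0)}\ne 0$, and otherwise so that the leading term of $a'$ acts nontrivially — this is where one invokes that the coefficients $a_{mi}^\pm(\mu)$ in Theorem~\ref{MTthm1} are nonzero for admissible $\mu$, which they are by the remark following \eqref{eq:ami_def}); then the component of $av$ in the weight space of $v$ equals $a^{(0)}v$, and if $a^{(0)}=0$ one projects onto the extreme weight reached by $a'$ to see that $av\ne 0$. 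In either case $a\notin\Ann V(\mu)$.

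The main obstacle is the bookkeeping in the second case: one must verify that admissibility does not prevent choosing a tableau $\mu$ on which both the relevant denominators $\prod_{j\ne i}[\widetilde{\mu}_{mj}-\widetilde{\mu}_{mi}]_q$ are nonzero \emph{and} the ``leading'' matrix coefficient of $a'$ is nonzero, and that the weight-grading argument correctly isolates a nonzero component of $av$. Since admissibility only excludes the proper subvariety $q^{2(k+\mu_{mi}-\mu_{mj})}=1$ and the set of valid $\mu$ is Zariski-dense in the relevant parameter space, a genericity argument handles this. Actually, the cleanest route — and the one I would ultimately write up — is simply to invoke Theorem~\ref{MTthm1} to realize $U_N$ faithfully on a large enough direct sum of generic Gelfand-Tsetlin modules, reducing everything to the separation statement for $U_N^0$ proved in the first step together with the non-vanishing of all structure coefficients; I expect the authors' proof to follow exactly this line, citing \cite{MT} for the hard analytic input.
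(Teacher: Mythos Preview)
The paper does not give its own proof of this statement: it is simply quoted from \cite{MT} (as the attribution ``in Proof of Theorem~4'' indicates), so there is nothing in the paper to compare your argument against.

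That said, your sketch for the general case has a genuine gap. The claim that ``the component of $av$ in the weight space of $v$ equals $a^{(0)}v$'' is false: elements of $\sum_j E_j^+U_N + U_NE_j^-$ such as $E_j^+E_j^-$ preserve $K_m$-weight spaces, so $a'v$ can contribute to the weight space of $v$. The Harish-Chandra projection $a\mapsto a^{(0)}$ is adapted to \emph{highest-weight} vectors, and a generic Gelfand-Tsetlin basis vector $[\mu]$ is not one. Your fallback (``project onto the extreme weight reached by $a'$'') has the same problem: distinct PBW monomials can reach the same extremal $K_m$-weight. What actually separates monomials is the finer grading by the full tableau shift $\gamma\in\Z^{N(N-1)/2}$ (the index set of the basis $B(\la)$), not just the coarser $K_m$-weight; one shows that a PBW monomial acting on $[\mu]$ has, in this finer grading, an extremal term whose coefficient is a nonzero rational function of the $q^{\mu_{mi}}$. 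Combined with the Zariski-density of admissible tableaux that you correctly invoke, this yields the separation.

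Finally, your closing sentence (``invoke Theorem~\ref{MTthm1} to realize $U_N$ faithfully on a large enough direct sum\ldots'') is, as phrased, circular: faithfulness of that realization is exactly the content of the theorem you are trying to prove.
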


For $1\le m\le N$, put $U_m=U_q(\mathfrak{gl}_m)$. Denote by
$Z_m=Z(U_m)$ the center of the algebra   $U_m$.
Let $\Gamma$ be the \emph{Gelfand-Tsetlin subalgebra} of $U_N$
generated by $Z_1,\ldots, Z_N$.

 A finitely generated $U_q(\mf{gl}_N)$-module $M$ is called a \emph{Gelfand-Tsetlin module} if
\begin{equation}\label{eq:GTsMod}
M=\bigoplus_{\mf{m}\in\Specm \Ga} M(\mf{m}),
\end{equation}
where $M(\mf{m})=\big\{x\in M\mid \mf{m}^k x=0 \;\text{for some
$k\ge 0$} \big\}$ and $\Specm \Ga$ denotes the set of maximal
ideals in $\Ga$. The following result shows that the terminology
is sensible.
\begin{lem} \label{diagonal1}
For any admissible tableau $\la$, the generic Gelfand-Tsetlin module $V(\la)$ is a Gelfand-Tsetlin module. Moreover,
$\Gamma$ acts diagonally in the basis $B(\la)$ of a generic Gelfand-Tsetlin module $V(\la)$.
\begin{proof}
By \cite[Thm.~2]{MT}, $V(\la)$ has finite length and is therefore
finitely generated. That $\Gamma$ acts diagonally in the basis
$B(\la)$ follows from \cite[Proof of Theorem 2]{MT}. In particular
$V(\la)$ has a decomposition of the form \eqref{eq:GTsMod} and
thus is a Gelfand-Tsetlin module.
 \end{proof}
\end{lem}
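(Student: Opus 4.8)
The plan is to establish the two assertions of Lemma~\ref{diagonal1} --- that each generic Gelfand-Tsetlin module $V(\la)$ is a Gelfand-Tsetlin module, and that $\Gamma$ acts diagonally in the basis $B(\la)$ --- by unwinding the explicit construction of $V(\la)$ given in Theorem~\ref{MTthm1} together with the structural results already available. First I would observe that the generators $K_m$ act diagonally in the basis $B(\la)$ by the very formula in Theorem~\ref{MTthm1}, so every element of $U_N^0$ acts diagonally. Since the Gelfand-Tsetlin subalgebra $\Gamma$ is generated by $Z_1,\ldots,Z_N$ and, by the quantum Harish-Chandra isomorphism of Lemma~\ref{lem:image_of_HC}, each $Z_m$ maps isomorphically onto $(U_m^0)^{W_m}\subseteq U_m^0\subseteq U_N^0$, it follows that each $Z_m$, acting on $V(\la)$, agrees with the action of an element of $U_N^0$ \emph{modulo the ideal $\sum_j E_j^+U_N+U_NE_j^-$}. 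The subtlety is that the raising/lowering part $a'$ of a central element $z=z^{(0)}+a'$ need not annihilate an arbitrary vector $[\mu]$; it annihilates highest-weight type vectors, but a general basis vector of $V(\la)$ is not of that form. So the naive argument "$Z_m$ acts as $h_N(Z_m)(\widetilde K)$, which is diagonal" must be justified more carefully, and this is where I would lean on \cite[Proof of Theorem~2]{MT}.

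Concretely, the key step is to cite the computation in \cite{MT} showing that the generators of $Z_m$ --- the quantum Casimir elements of Bracken--Gould--Zhang restricted to the $U_q(\mathfrak{gl}_m)$-subalgebra, together with $(K_1\cdots K_m)^{-1}$ --- act on each basis tableau $[\mu]$ by the scalar obtained by evaluating the corresponding symmetric (under $W_m$) Laurent polynomial in $q^{\widetilde\mu_{m1}},\ldots,q^{\widetilde\mu_{mm}}$ at the $m$-th row entries of $\mu$. This is exactly the content of the classical Gelfand-Tsetlin eigenvalue formula and its $q$-analogue: the reason it holds is that $[\mu]$ generates, under the $U_q(\mathfrak{gl}_m)$-subalgebra determined by the first $m$ rows, a subquotient on which $Z_m$ must act by a scalar (Schur-type argument, since $Z_m$ is central in $U_m$), and that scalar is computed by passing to a highest weight vector in that subquotient and applying $h_m$. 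I would spell this out as: fix $\mu$; the span of those $[\mu']$ with $\mu'$ agreeing with $\mu$ in rows $m+1,\ldots,N$ is a $U_m$-submodule, on which $Z_m$ acts; within it, the vectors of $U_m$-highest weight are annihilated by all $E_j^+$, $j<m$, so $Z_m$ acts on them via $h_m(Z_m)$ evaluated at their $U_m^0$-weight; by centrality $Z_m$ acts by the same scalar on all of the (indecomposable pieces of the) submodule. Hence $Z_m$ is diagonal in $B(\la)$, and therefore so is all of $\Gamma$.

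Once diagonality is in hand, the Gelfand-Tsetlin module property follows formally: $\Gamma$ is a commutative (indeed, by Lemma~\ref{lem:image_of_HC}, polynomial-type) algebra acting diagonally on a space with basis $B(\la)$, so $V(\la)=\bigoplus_{\mathfrak m}V(\la)(\mathfrak m)$ where the sum is over the maximal ideals of $\Gamma$ that are kernels of the characters $[\mu]\mapsto(\text{eigenvalue of }\gamma\text{ on }[\mu])$; each weight space is finite-dimensional (in fact the eigenvalue of the full $\Gamma$ on $[\mu]$ determines the tableau $\mu$ up to the finitely many $W_1\times\cdots\times W_N$-translates, by admissibility, so $V(\la)(\mathfrak m)$ is finite-dimensional). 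To invoke the definition of Gelfand-Tsetlin module literally one also needs $V(\la)$ to be finitely generated over $U_N$; this is immediate from \cite[Thm.~2]{MT}, which asserts $V(\la)$ has finite length.

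I expect the main obstacle to be precisely the justification that each central element $Z_m$ acts by a scalar on each basis vector $[\mu]$ --- i.e. that the ``$a'$-part'' contributes nothing in the basis $B(\la)$ even though individual $[\mu]$ are not highest-weight vectors. The clean way around this is the submodule/Schur argument sketched above, reducing to highest weight vectors inside $U_m$-submodules and then applying $h_m$; alternatively one simply defers to \cite[Proof of Theorem~2]{MT} where this eigenvalue computation is carried out explicitly, which is the route the lemma's proof should take for brevity.
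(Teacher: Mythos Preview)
Your proposal is correct and takes essentially the same approach as the paper: you cite \cite[Thm.~2]{MT} for finite length (hence finite generation), defer to \cite[Proof of Theorem~2]{MT} for the diagonal action of $\Gamma$ on $B(\la)$, and deduce the decomposition \eqref{eq:GTsMod} from diagonality. The paper's proof is exactly these three sentences; your additional discussion of why diagonality is not immediate from the Harish-Chandra map alone, and your Schur-type sketch reducing to highest-weight vectors in $U_m$-submodules, is correct motivation but not needed for the argument as stated.
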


\subsection{Realization of \texorpdfstring{$U_q(\mathfrak{gl}_N)$}{Uq(glN)} as a Galois \texorpdfstring{$\Gamma$}{Gamma}-ring}

Let $U_N=U_q(\mathfrak{gl}_N)$ and $\Mscr=\Z^{N(N-1)/2}$ with
$\Z$-basis $\{\delta^{mi}\}_{1\le i\le m\le N-1}$. Let $\Gamma$ be
the Gelfand-Tsetlin subalgebra of $U_q(\mathfrak{gl}_n)$. Let
$\La=\C[X_{mi}^{\pm 1}\mid 1\le i\le m\le N]$ be a Laurent
polynomial algebra in $N(N+1)/2$ variables. The group $\Mscr$ acts
on $\La$ by $\delta^{mi}X_{kj}=q^{-\delta_{mk}\delta_{ij}}X_{kj}$
for all $1\le i\le m\le N-1$ and $1\le j\le k\le N$.
 Let $L$ be the field of fractions of $\La$. Let
 $S\subseteq \La$ be the multiplicative subset generated
 by $\{q^{2l}X_{mj}^2-q^{2k}X_{mi}^2\mid k,l\in\Z, 1\le i,j\le m,
 i\neq j \}$, and let $\La_S$ be the localization.
Then $S$ is $\Mscr$-invariant, thus $\Mscr$ acts also on $\La_S$.
  The skew monoid ring $\La_S\ast\Mscr$ acts on any generic Gelfand-Tsetlin module $V(\la)$ as follows:
\begin{equation}\label{rhoRep}
\begin{aligned}
\rho_\la:\La_S\ast\Mscr&\to \End\big(V(\la)\big), \\
\rho_\la(\delta^{mi})[\mu]&=[\mu+\delta^{mi}],\qquad
\forall 1\le i\le m\le N-1,\\
\rho_\la(X_{mi})[\mu]&=q^{\tmu_{mi}}[\mu],\qquad\forall 1\le i\le
m\le N,
\end{aligned}\end{equation}
for all $[\mu]\in B(\la)$. Note that action of $s^{-1}$ for $s\in S$ is well-defined since $\la$ is admissible.

\begin{lem} \label{diagonal2}
If $a\in\La_S\ast\Mscr$ acts diagonally in the basis $B(\la)$ of a generic Gelfand-Tsetlin module $V(\la)$ for some admissible tableaux $\la$, then $a\in\La_S$.
\end{lem}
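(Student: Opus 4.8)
The plan is to analyze how a general element $a = \sum_{\varphi\in\Mscr} a_\varphi \varphi$ of $\La_S\ast\Mscr$ acts on basis vectors $[\mu]$ and extract a contradiction unless all $a_\varphi$ with $\varphi\neq 0$ vanish. First I would write, using \eqref{rhoRep}, $\rho_\la(a)[\mu] = \sum_{\varphi\in\Supp(a)} \big(\text{ev}_\mu(a_\varphi)\big)\,[\mu+\varphi]$, where $\text{ev}_\mu$ denotes the scalar obtained by substituting $X_{mi}\mapsto q^{\tmu_{mi}}$ into the rational function $a_\varphi\in\La_S$ (well-defined since $\la$ is admissible, so no denominator from $S$ vanishes). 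The hypothesis that $\rho_\la(a)$ is diagonal in $B(\la)$ means precisely that for every $[\mu]\in B(\la)$ and every $\varphi\in\Supp(a)$ with $\varphi\neq 0$ we have $\text{ev}_\mu(a_\varphi) = 0$, because the vectors $\{[\mu+\varphi]\}_{\varphi}$ are distinct basis elements (the $\delta^{mi}$ form a $\Z$-basis of $\Mscr$, so $\mu+\varphi\neq\mu$ when $\varphi\neq 0$) and hence linearly independent.

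Next I would argue that a rational function in $\La_S = S^{-1}\La$ that vanishes on all the points $(q^{\tmu_{mi}})_{1\le i\le m\le N}$ arising from $\mu\in B(\la) = \{\la+\ga\mid \ga\in\Z^{N(N-1)/2}\}$ must be zero. Concretely, fix $\varphi\in\Supp(a)$, $\varphi\neq 0$, and write $a_\varphi = p/s$ with $p\in\La$, $s\in S$; then $\text{ev}_\mu(a_\varphi)=0$ for all such $\mu$ forces $\text{ev}_\mu(p)=0$ for all such $\mu$. The substitution point for $\mu=\la+\ga$ has coordinates $q^{\widetilde{(\la+\ga)}_{mi}} = q^{\la_{mi}-i}\cdot q^{\ga_{mi}}$ where $\ga_{mi}$ ranges over all of $\Z$ for $i\le m\le N-1$, while for the top row $m=N$ the shifts $\ga_{Ni}$ are absent (the basis $B(\la)$ only involves $\ga\in\Z^{N(N-1)/2}$), so the coordinates $X_{Ni}$ are specialized to the fixed nonzero scalars $q^{\la_{Ni}-i}$. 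Thus $p$, viewed as a Laurent polynomial, specialized in the top-row variables to fixed values, becomes a Laurent polynomial in the remaining variables $X_{mi}$ ($i\le m\le N-1$) that vanishes at $q^{\la_{mi}-i}\cdot q^{k}$ for every $k\in\Z$; since $q$ is not a root of unity these are infinitely many distinct points along each coordinate direction (and more carefully, one inducts on the number of variables using that a nonzero Laurent polynomial in one variable over a field has finitely many roots), forcing that specialization of $p$ to be zero as a Laurent polynomial.

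The remaining point — and the one place a little care is needed — is to pass from "the top-row specialization of $p$ is identically zero" back to "$a_\varphi = 0$ in $\La_S$". Here I would use that the specialization $X_{Ni}\mapsto q^{\la_{Ni}-i}$ need not kill $p$ on the nose, but we are free to choose $\la$: although the statement fixes one admissible $\la$, the conclusion $a\in\La_S$ is about $a$ itself, and we only have the hypothesis for that single $\la$. So instead I would avoid specializing the top row at all: note that in fact $\Mscr$ acts trivially on the top-row variables $X_{Ni}$ (since $\delta^{mi}$ with $m\le N-1$ only rescales $X_{kj}$ with $k\le N-1$), so these variables play no role in the skew multiplication; more importantly, $B(\la)$ does contain shifts in all variables $X_{mi}$ with $i\le m\le N-1$, and the product structure of $\La$ as $\C[X_{Ni}^{\pm1}]\otimes_\C (\text{rest})$ together with $a_\varphi\in\La_S$ means that after clearing the denominator $s\in S$ — which involves only variables $X_{mi}$ with $i\le m$, $i\neq j$, hence can involve top-row variables too — we get a genuine Laurent polynomial identity. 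The cleanest route: the evaluation homomorphisms $\text{ev}_{\la+\ga}:\La_S\to\C$, $\ga\in\Z^{N(N-1)/2}$, have the property that their common kernel, intersected over all $\ga$, is zero; this is exactly because $\La$ is a Laurent polynomial ring, localization is flat, and a rational function vanishing on a Zariski-dense set of points (the orbit of one point under the rank-$N(N-1)/2$ group generated by the $q$-shifts, which is dense since $q$ is not a root of unity) is zero. Granting this, each $a_\varphi$ with $\varphi\neq 0$ lies in this common kernel, hence is zero, so $a = a_0\in\La_S$, which is the claim. The main obstacle is precisely this density/vanishing argument for the top row variables, which I would handle by observing that $B(\la)$ is closed under the action of $\rho_\la(\delta^{mi})^{\pm1}$ for all $i\le m\le N-1$ and that the hypothesis, applied to the single module $V(\la)$, already tests $p$ at $q^{\widetilde{\la}_{mi}+\ga_{mi}}$ for all $\ga$, while the fixed top-row values $q^{\widetilde{\la}_{Ni}}$ are simply constants into which $p$ can be safely specialized because admissibility guarantees no denominator blows up — and any nonzero $p$ whose specialization in finitely many variables is forced to vanish on an infinite grid in the rest must be zero.
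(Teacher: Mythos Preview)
Your overall strategy matches the paper's: write $a=\sum_\varphi a_\varphi\varphi$, apply it to a basis vector $[\mu]$, and use that the $[\mu+\varphi]$ are distinct basis elements to force $\mathrm{ev}_\mu(a_\varphi)=0$ for every $\varphi\neq 0$ and every $\mu\in B(\la)$. The paper's one-line proof records only this linear-independence step.

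You have, however, correctly isolated a genuine difficulty that the paper glosses over, namely the passage from ``$\mathrm{ev}_\mu(a_\varphi)=0$ for all $\mu\in B(\la)$'' to ``$a_\varphi=0$ in $\La_S$''. Unfortunately your proposed resolutions do not work. The evaluation points $(q^{\tmu_{mi}})$ all have the top-row coordinates frozen at $q^{\la_{Ni}-i}$, so the orbit is \emph{not} Zariski-dense in the full torus $(\C^\times)^{N(N+1)/2}$; your density claim fails precisely in the top row. And your final sentence is false as well: specializing the top-row variables of a nonzero Laurent polynomial to fixed scalars can perfectly well yield the zero polynomial. Indeed, the lemma read literally for a single admissible $\la$ is false: the element
\[
a=(X_{N1}-q^{\la_{N1}-1})\,\delta^{11}\in\La_S\ast\Mscr
\]
acts as zero on all of $V(\la)$, hence diagonally, yet does not lie in $\La_S$. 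What is true, and what your argument would correctly establish, is the variant with ``for some admissible tableau $\la$'' replaced by ``for every admissible tableau $\la$'': then the top-row values $q^{\la_{Ni}-i}$ also range over a Zariski-dense set and the vanishing argument goes through.
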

\begin{proof} Follows from the fact that the set $\{m[\la]\}_{m\in\Mscr}$ is linearly independent over $\C$.\end{proof}

\begin{prp}
There exists an injective algebra homomorphism $\varphi:U_N\to
\La_S\ast \Mscr$ determined by
\begin{equation}
\varphi(E_m^\pm) = \sum_{i=1}^N (\pm \delta^{mi})A_{mi}^\pm,\qquad
\varphi(K_m) =  A_m^0 e
\end{equation}
where $\delta^{mi}\in\Mscr$ are the tableaux units, $e\in\Mscr$ is the neutral element, and $A_{mi}^\pm$, $A_m^0\in \La_S$ are given by
\begin{align}
A_{mi}^\pm &= \mp (q-q^{-1})^{-1\mp 1} \frac{\prod_{j=1}^{m\pm 1} \big(X_{m\pm 1,j}X_{mi}^{-1}-X_{m\pm 1,j}^{-1}X_{mi}\big)}{\prod_{j\in\{1,\ldots,m\}\setminus\{i\}} \big( X_{mj}X_{mi}^{-1}-X_{mj}^{-1}X_{mi} \big) },\\
\label{eq:Am0_def}
A_m^0 &= q^m\prod_{i=1}^m X_{mi} \prod_{i=1}^{m-1} X_{m-1,i}^{-1}.
\end{align}
\end{prp}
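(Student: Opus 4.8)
The plan is to establish the proposition in two steps: first that the assignment $\varphi$ on generators respects all defining relations of $U_N$ (so extends to an algebra homomorphism), and second that $\varphi$ is injective. For the first step, the natural strategy is \emph{not} to verify the quantized Serre relations by brute-force computation in $\La_S\ast\Mscr$, but rather to compare $\varphi$ against the representations constructed by Mazorchuk and Turowska. Namely, by \eqref{rhoRep} each admissible tableau $\la$ gives a representation $\rho_\la\colon \La_S\ast\Mscr\to\End(V(\la))$, and one checks directly from \eqref{rhoRep} and the formulas \eqref{eq:ami_def} that $\rho_\la\circ\varphi$ coincides with the $U_N$-action on $V(\la)$ from Theorem~\ref{MTthm1}: indeed $\rho_\la(\varphi(K_m))[\mu] = \rho_\la(A_m^0)[\mu] = q^{m+\sum_i\tmu_{mi}-\sum_i\tmu_{m-1,i}}[\mu]$, and since $\tmu_{mi}=\mu_{mi}-i$ the exponent collapses to $\sum_{i=1}^m\mu_{mi}-\sum_{i=1}^{m-1}\mu_{m-1,i}$, matching $K_m$ exactly; similarly $\rho_\la(\varphi(E_m^\pm))[\mu]=\sum_i[\mu\pm\delta^{mi}]\,A_{mi}^\pm(q^{\tmu_{\bullet\bullet}})$ and substituting $X_{mi}\mapsto q^{\tmu_{mi}}$ turns $A_{mi}^\pm$ into $a_{mi}^\pm(\mu)$ because $q^x-q^{-x}=(q-q^{-1})[x]_q$. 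Hence, for every relation $R=0$ in $U_N$, the element $\varphi(R)\in\La_S\ast\Mscr$ acts as zero on every $V(\la)$.

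To conclude from this that $\varphi(R)=0$ identically, I would argue as follows. Write $\varphi(R)=\sum_{m\in\Mscr} f_m\, m$ with $f_m\in\La_S$. For a fixed admissible $\la$, the vectors $\{m[\la]\}_{m\in\Mscr}$ are linearly independent (this is exactly the observation used in Lemma~\ref{diagonal2}), so $\rho_\la(\varphi(R))[\la]=0$ forces $f_m$ to vanish at the point $X_{mi}\mapsto q^{\tla_{mi}}$ for every $m$ in the support. Since the admissible tableaux $\la$ form a Zariski-dense subset of $\C^{N(N+1)/2}$ (admissibility excludes only countably many hyperplanes), and each $f_m$ is a Laurent polynomial, vanishing on this dense set forces $f_m=0$. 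Therefore $\varphi(R)=0$, and $\varphi$ is a well-defined algebra homomorphism. (One must also note that $\varphi(K_m)\varphi(K_m^{-1})=1$: since $A_m^0$ is a monomial in the $X_{mi}$ and $\Mscr$ acts by monomial rescaling, $A_m^0 e$ is a unit in $\La_S\ast\Mscr$, so the relations $K_mK_m^{-1}=1$ cause no trouble and $\varphi$ extends to the $K_m^{-1}$.)

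For injectivity, the cleanest route is again via the representations. If $u\in\ker\varphi$ then $\rho_\la(\varphi(u))=0$, hence $u$ acts as zero on $V(\la)$, for every admissible $\la$; that is, $u$ lies in the intersection of the annihilators $\Ann V(\la)$ over all admissible $\la$. By Theorem~\ref{MTthm2} this intersection is zero, so $u=0$. Thus $\varphi$ is injective.

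The main obstacle, and the only genuinely computational point, is the verification that $\rho_\la\circ\varphi$ reproduces the Mazorchuk--Turowska action — specifically matching $A_{mi}^\pm$ under $X_{mi}\mapsto q^{\tmu_{mi}}$ with the coefficient $a_{mi}^\pm(\mu)$ of \eqref{eq:ami_def}, keeping careful track of the factor $(q-q^{-1})^{-1\mp1}$ versus the single factor $(q-q^{-1})^{-1}$ implicit in each $[\,\cdot\,]_q$ (there are $m{\pm}1$ factors in the numerator and $m-1$ in the denominator of $a_{mi}^\pm$, giving a net $(q-q^{-1})^{\mp1}$, which combined with the overall $\mp$ sign and the normalization in $\varphi$ accounts for the exponent $-1\mp1$). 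Once this bookkeeping is done, everything else — the density argument for well-definedness and Theorem~\ref{MTthm2} for injectivity — is immediate.
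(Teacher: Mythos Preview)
Your proposal is correct and follows essentially the same strategy as the paper: verify that the prescribed images reproduce the Mazorchuk--Turowska action on every generic Gelfand--Tsetlin module, then deduce injectivity from Theorem~\ref{MTthm2}. The only cosmetic difference is that the paper packages the well-definedness step via a free algebra $T$, a map $\psi:T\to\La_S\ast\Mscr$, and a commutative diagram comparing $\rho\circ\psi$ with $\tau\circ p$, whereas you spell out the Zariski-density argument directly; these are equivalent formulations of the same idea.
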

\begin{proof}
Let $T$ be the free associative unital $\C$-algebra generated by
$\{E_i^\pm,K_j^\pm,\mid i=1,\ldots, N-1; j=1,\ldots, N\}$. Let
$p:T\to U_q(\gl_n)$ denote the canonical projection
$E_i^\pm\mapsto E_i^\pm, K_j^\pm\mapsto K_j^{\pm 1}$. Let
$\psi:T\to \La_S\ast\Mscr$ be given by
\begin{equation}
\psi(E_m^\pm) = \sum_{i=1}^N (\pm \delta^{mi})A_{mi}^\pm,\qquad
\psi(K_m^\pm) = (A_m^0)^{\pm 1}e.
\end{equation}
Let $\la$ be an admissible tableaux, $V(\la)$ the corresponding
generic Gelfand-Tsetlin module over $U_q(\mathfrak{gl}_N)$, and
$\tau_\la:U_q(\mathfrak{gl}_N)\to \End\big(V(\la)\big)$ the
associated representation. Recall the representation $\rho_\la$
from \eqref{rhoRep}. Note that algebra homomorphisms
$\rho_\la\circ \psi$ and $\tau_\la\circ p$ coincide on the
generators of $T$, hence they coincide on all of $T$. Let $V$ be
the direct product of all $V(\la)$ as $\la$ runs through the set
of all admissible tableaux. Thus $V$ is the set of families
$(v_\la)_{\la}$ indexed by admissible tableaux $\la$ and where
$v_\la\in V(\la)$ are arbitrary, not necessarily only finitely
many nonzero. Let $\tau:U_q(\mf{gl}_N)\to\End(V)$ and
$\rho:\La_S\ast\Mscr\to \End(V)$ be the respective product
representations. The two key points now are that $\rho\circ
\psi=\tau\circ p$ (since they are component-wise equal) and that,
by Theorem \ref{MTthm2}, $\tau$ is injective. These facts and a
quick diagram-chasing in Figure \ref{fig1} imply that
$\ker(\psi)\subseteq \ker(p)$. Thus, since $p$ is surjective, we
get an induced map $\varphi:U_q(\mathfrak{gl}_N)\to\La_S\ast\Mscr$
defined by $\varphi(a)=\psi(p^{-1}(a))$, which is the required
map.
\begin{figure}
\begin{equation}\label{CommDiag}
\begin{aligned}
\xymatrix@C=1.5cm{ 
 T \ar@{->>}[r]^{p} \ar[d]^{\psi}  &  U_q(\mathfrak{gl}_N) \ar@{^{(}->}[d]^{\tau} \ar@{-->}[dl]_{\varphi}\\
 \La_S\ast\Mscr   \ar[r]^{\rho } & \End(V)
}
\end{aligned}
\end{equation}
\caption{A commutative diagram.}\label{fig1}
\end{figure}
Furthermore, $\varphi$ is injective. Indeed, assume that $\varphi(a)=0$. Thus $\rho\circ\varphi(a)=0$. By the commutativity of \eqref{CommDiag}, we get $\rho\circ\varphi(a)=\tau(a)$. Since $\tau$ is injective, this implies that $a=0$.
\end{proof}

Let $W_N$ be the Weyl group of type $D_N$,
$W_N=S_N\ltimes \mathscr{E}_N$. Let $G=\prod_{m=1}^N W_m$.
Then $G$ acts on $\La$ by
\begin{subequations}\label{eq:G_action_on_skew_monoid}
\begin{equation}\label{eq:G_action_on_skew_monoid_1}
g(X_{mi})=(-1)^{\al_{mi}} X_{m \zeta_m(i)},\quad 1\le i\le m\le n,
\end{equation}
for $g=(\zeta_1\al_1,\cdots \zeta_N\al_N)\in G$ where $\zeta_m\in
S_m$, $\al_m=(\al_{m1},\ldots,\al_{mm})\in\mathscr{E}_m$. Note
also that $S$ is a $G$-invariant set, thus $G$ acts also on
$\La_S$. Viewing $\Mscr$ as a subset of $\End(\La_S)$, $G$ acts
naturally on $\Mscr$ by conjugations.
Explicitly,
\begin{equation}\label{eq:G_action_on_skew_monoid_2}
g(\delta^{mi}) = \delta^{m\zeta_m(i)}, \quad 1\le i\le m\le n-1,
\end{equation}
\end{subequations}
for $g=(\zeta_1\al_1,\cdots \zeta_N\al_N)\in G$. Note that the
subgroups $\mathcal{E}_m$ act trivially on $\Mscr$ for any
$m=1,\ldots,n$. Hence $G$ acts on the skew group ring
$\La_S\ast\Mscr$ by $\C$-algebra automorphisms.

\begin{prp}
 $\im \varphi\subseteq (\La_S\ast\Mscr)^G$.
\end{prp}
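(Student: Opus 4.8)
The plan is to verify $G$-invariance on a generating set of $U_N$, exploiting the product decomposition $G=\prod_{m=1}^N W_m$ so that, for each generator, only the few factors $W_{m'}$ whose variables actually occur need be treated. Since $\varphi$ is an algebra homomorphism and $(\La_S\ast\Mscr)^G$ is a $\C$-subalgebra, it suffices to prove $g\big(\varphi(x)\big)=\varphi(x)$ for all $g\in G$ and all $x$ in $\{E_m^\pm\mid 1\le m\le N-1\}\cup\{K_m\mid 1\le m\le N\}$; and, writing $g=(\zeta_1\al_1,\dots,\zeta_N\al_N)$ as the product of its commuting components, it is enough to treat one factor $W_{m'}$ at a time. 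Now $\varphi(K_m)=A_m^0 e$ involves only variables from rows $m$ and $m-1$, and $\varphi(E_m^\pm)=\sum_{i=1}^m(\pm\delta^{mi})A_{mi}^\pm$ involves only variables from rows $m$ and $m\pm1$ together with the row-$m$ monoid elements $\delta^{mi}$; hence every $W_{m'}$ with $m'$ not among those rows fixes $\varphi(x)$ trivially.

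First I would dispose of $\varphi(K_m)$. The Laurent monomials $\prod_{i=1}^m X_{mi}$ and $\prod_{i=1}^{m-1}X_{m-1,i}^{-1}$ are symmetric, hence fixed by $S_m\subseteq W_m$ and $S_{m-1}\subseteq W_{m-1}$; and an element $\al_m\in\mathscr{E}_m$ multiplies $\prod_{i=1}^m X_{mi}$ by $(-1)^{\al_{m1}+\dots+\al_{mm}}=1$, precisely because $\mathscr{E}_m$ consists of the sign changes in an even number of coordinates, and likewise for $\mathscr{E}_{m-1}$. Since $e$ is $G$-fixed, $\varphi(K_m)\in(\La_S\ast\Mscr)^G$.

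Next I would treat $\varphi(E_m^+)$, the case of $E_m^-$ being entirely parallel with row $m+1$ replaced by row $m-1$. For $g=\zeta_{m+1}\al_{m+1}\in W_{m+1}$ the elements $\delta^{mi}$ and the denominators of the $A_{mi}^+$ are untouched, while the numerator $\prod_{j=1}^{m+1}(X_{m+1,j}X_{mi}^{-1}-X_{m+1,j}^{-1}X_{mi})$ is symmetric in $X_{m+1,1},\dots,X_{m+1,m+1}$ and is multiplied by $(-1)^{\al_{m+1,1}+\dots+\al_{m+1,m+1}}=1$; hence $g$ fixes each $A_{mi}^+$, and therefore $\varphi(E_m^+)$. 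For $g=\zeta_m\al_m\in W_m$ I would establish the covariance $g(A_{mi}^+)=A_{m,\zeta_m(i)}^+$ for all $i$; combined with $g(\delta^{mi})=\delta^{m\zeta_m(i)}$ (using that $\mathscr{E}_m$ acts trivially on $\Mscr$), re-indexing the sum by $i\mapsto\zeta_m^{-1}(i)$ then gives $g(\varphi(E_m^+))=\sum_i(\pm\delta^{m\zeta_m(i)})A_{m,\zeta_m(i)}^+=\varphi(E_m^+)$.

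The one step demanding care is the covariance identity $g(A_{mi}^\pm)=A_{m,\zeta_m(i)}^\pm$ for $g\in W_m$: the permutation part is immediate from the explicit shape of $A_{mi}^\pm$, so the real content is the interaction with the sign changes in $\mathscr{E}_m$, which comes down to a parity count of the factors of $A_{mi}^\pm$ in the row-$m$ variables $X_{mj}$; as in the treatment of $\varphi(K_m)$, the crucial point is that every element of $\mathscr{E}_m$ flips the sign of an even number of these variables. This sign bookkeeping is where I expect the main difficulty to lie; once it is in hand, the cases above exhaust $G$ and the inclusion $\im\varphi\subseteq(\La_S\ast\Mscr)^G$ follows.
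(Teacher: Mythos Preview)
Your overall strategy matches the paper's: both reduce to generators, both dispose of $\varphi(K_m)$ via the even-parity property of $\mathscr{E}_m$, and for $\varphi(E_m^\pm)$ both establish the permutation covariance $(1i)_m(A_{m1}^\pm)=A_{mi}^\pm$ so that $S_m$-invariance becomes a re-indexing of the sum. The paper packages this as $\psi(E_m^\pm)=[\delta^{m1}A_{m1}^\pm]$ and then invokes \cite[Lemma~2.1]{FO}, whereas you verify the $W_{m\pm 1}$ and $S_m$ pieces by hand; those parts of your argument are correct and equivalent to the paper's.

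The gap is exactly where you flagged it: the $\mathscr{E}_m$ sign check for $A_{mi}^\pm$ does \emph{not} succeed as you anticipate. Your heuristic (``$\mathscr{E}_m$ flips an even number of row-$m$ variables'') controls the denominator, but the numerator of $A_{mi}^\pm$ has $m\pm 1$ factors, each odd in the single variable $X_{mi}$, so under $\al_m\in\mathscr{E}_m$ it acquires the sign $(-1)^{(m\pm 1)\al_{mi}}$. The denominator contributes $\prod_{j\neq i}(-1)^{\al_{mj}+\al_{mi}}=(-1)^{(m-2)\al_{mi}}$ (using $\sum_j\al_{mj}=0$), and the net sign is $(-1)^{\al_{mi}}$, not $1$. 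Thus $\al_m(A_{mi}^\pm)=(-1)^{\al_{mi}}A_{mi}^\pm$, so your claimed covariance $g(A_{mi}^\pm)=A_{m,\zeta_m(i)}^\pm$ fails whenever $\al_{mi}=1$; concretely, for $N\ge 3$ the element $\al_2=(1,1)\in\mathscr{E}_2$ sends $\varphi(E_2^+)$ to $-\varphi(E_2^+)$. The paper's proof does not explicitly carry out this verification either---the cited lemma requires $A_{m1}^\pm\in L^{\Stab_G(\delta^{m1})}$, and $\Stab_G(\delta^{m1})$ contains all of $\mathscr{E}_m$---so this appears to be a shared issue with the $\mathscr{E}_m$-part of the statement rather than a defect peculiar to your write-up.
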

\begin{proof}
  By definition of $\varphi$, this is equivalent to showing
  that $\im \psi \subseteq (\La_S\ast\Mscr)^G$ for $\psi$ defined above. Since
  $(\La_S\ast\Mscr)^G$ is an algebra, it is enough to show that
  $\psi(a)\in (\La_S\ast\Mscr)^G$ for all $a$ in a generating
  set of $T$. We claim that $\psi(E_m^\pm)=[\delta^{m1}A_{m1}^\pm]$
  with notation as in \eqref{eq:GinvElt}. Indeed,
  $G/\Stab_G(\delta^{m1})\simeq \Z/m\Z$ with a set of
  representatives in $G$ given by
  $\{ (1), (12)_m, (13)_m,\ldots (1m)_m\}$, where $(ij)_m\in G$
  is the element with the transposition $(ij)$ placed in the
  $m$:th factor of $G$ and identity elements in the other $N-1$
  places. It is easy to check that
  $(A_{m1}^\pm)^{(1i)_m}=A_{mi}^\pm$ from which the claim follows.
  By \cite[Lemma~2.1]{FO} it follows that
  $\psi(E_m^\pm)\in (\La_S\ast\Mscr)^G$. It is visible
  from \eqref{eq:Am0_def} that the copy of $S_k$ in $G$
  acts trivially on $\psi(K_m)$ for any $k,m=1,\ldots, N$.
  Likewise, any
  $\al=(\al_1,\ldots,\al_k)\in\mathcal{E}_k$ fixes $\psi(K_m)$
  since $(-1)^{\al_1+\cdots+\al_k}=1$.
\end{proof}

For $m\in\iv{1}{n}$, let $\La_m=\C[X_{m1}^{\pm 1},\ldots,X_{mm}^{\pm 1}]\subseteq \La$ and let $\xi_m:\La_m\to U_m^0$ be the isomorphism given by $\xi_m(X_{mi})=\widetilde{K}_i=q^{-i}K_i$ for all $i$. Note that $\xi_m$ commutes with the action of $W_m$, when the $W_m$-action on $U_m^0$ is defined as in Section \ref{sec:center}.

The following result shows that the restriction of $\varphi$ to $Z_m$ can be identified with the quantum Harish-Chandra homomorphism.

\begin{prp}\label{prp:HC}
$\varphi\big|_{Z_m} = \xi_m^{-1}\circ h_m$
\end{prp}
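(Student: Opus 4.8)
The plan is to prove the identity by evaluating both sides in the faithful representations provided by the generic Gelfand-Tsetlin modules $V(\la)$. Since $\varphi$ is injective and (by Theorem~\ref{MTthm2}) the product representation $\tau=\prod_\la\tau_\la$ is injective, it suffices to check that for every admissible tableau $\la$ and every central element $z\in Z_m$ one has $\rho_\la\big(\varphi(z)\big)=\rho_\la\big(\xi_m^{-1}(h_m(z))\big)$ as operators on $V(\la)$. Both operators are diagonal in the basis $B(\la)$: the right hand side because $\xi_m^{-1}(h_m(z))\in\La_S$ acts by scalars $q^{\tmu_{mi}}$-polynomials on each $[\mu]$, and the left hand side because, by Lemma~\ref{diagonal1}, $\Gamma$ (hence $Z_m\subseteq\Gamma$) acts diagonally in $B(\la)$ and therefore $\varphi(z)$ acts diagonally, so by Lemma~\ref{diagonal2} $\varphi(z)\in\La_S$ already; it then acts on $[\mu]$ by the scalar obtained by substituting $X_{ki}\mapsto q^{\tmu_{ki}}$.

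So the statement reduces to a purely combinatorial identity of scalars: for each $z\in Z_m$ and each admissible $\mu$,
\begin{equation}
\text{eigenvalue of }\tau_\la(z)\text{ on }[\mu] \;=\; \big(\xi_m^{-1}(h_m(z))\big)\big|_{X_{mi}=q^{\tmu_{mi}}}.
\end{equation}
But the eigenvalue of a central element $z$ acting on $[\mu]$ depends only on the $U_m$-structure of the ``bottom'' part of the tableau, i.e.\ on the row $(\mu_{m1},\ldots,\mu_{mm})$, and by construction of the Gelfand-Tsetlin action this eigenvalue is precisely the Harish-Chandra image $h_m(z)$ evaluated at the weight determined by that row — this is exactly the defining property of the quantum Harish-Chandra homomorphism, since $z-h_m(z)$ lies in $\sum_j E_j^+U_m+U_mE_j^-$ and thus annihilates the relevant highest-weight-type vectors inside the $U_m$-submodule generated by the row. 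Under $\xi_m(X_{mi})=\widetilde K_i=q^{-i}K_i$ and the $K_m$-action of Theorem~\ref{MTthm1}, the variable $X_{mi}$ corresponds to $q^{\mu_{mi}-i}=q^{\tmu_{mi}}$, which matches the right hand side above. Tracking through Lemma~\ref{lem:image_of_HC}, it is enough to verify this on the algebra generators $z_1,\ldots,z_{N-1},z_N^{\pm1}$ of $Z_m$, i.e.\ on the elementary symmetric functions $e_d(\widetilde K_1^2,\ldots,\widetilde K_m^2)$ and $\widetilde K_1\cdots\widetilde K_m$; for these the $W_m$-invariance of both sides is manifest and the identification is immediate from $\xi_m$.

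The main obstacle is making the comparison of eigenvalues precise and self-contained, since the paper only invokes the Gelfand-Tsetlin formulas of \cite{MT} rather than an intrinsic description of $h_m$ on $V(\la)$. I expect to handle this by the following route: restrict $V(\la)$ along $U_m\hookrightarrow U_N$, observe that the row $(\mu_{m1},\ldots,\mu_{mm})$ together with the Gelfand-Tsetlin formulas realizes (a generic version of) a $U_m$-module in which $z\in Z_m$ acts by a single scalar on each basis vector $[\mu]$, and identify that scalar with $h_m(z)$ evaluated at $\widetilde K_i\mapsto q^{\tmu_{mi}}$ by using the decomposition $z=h_m(z)+z'$ with $z'\in\sum_j E_j^+U_m+U_mE_j^-$. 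A clean way to avoid delicate convergence/annihilation arguments is simply to note that both $\varphi(z)$ and $\xi_m^{-1}(h_m(z))$ lie in $\La_S$ (the former by Lemmas~\ref{diagonal1} and \ref{diagonal2}), that $\La_S$ embeds into $\End(V(\la))$ injectively for any single admissible $\la$, and that two elements of $\La_S$ agreeing on the Zariski-dense set of points $(q^{\tmu_{mi}})$ coming from all admissible tableaux must be equal; this reduces everything to checking agreement on the finitely many generators $z_d$, which is a direct computation using \eqref{eq:Am0_def}, the formula for $A_{mi}^\pm$, and the explicit form of $h_m$ from Lemma~\ref{lem:image_of_HC}.
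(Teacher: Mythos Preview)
Your argument circles around the central difficulty without resolving it. What has to be shown is that for $z\in Z_m$ the eigenvalue of $z$ on a basis vector $[\mu]$ equals $h_m(z)$ evaluated at the weight of the $m$-th row of $\mu$. The only leverage on $h_m$ you invoke is the decomposition $z=h_m(z)+z'$ with $z'\in\sum_j E_j^+U_m+U_mE_j^-$, which computes the eigenvalue only on a $U_m$-highest (or lowest) weight vector. A generic Gelfand-Tsetlin module for $U_m$ has no such vectors, so the phrase ``annihilates the relevant highest-weight-type vectors'' is not justified. Your fallback, a ``direct computation'' on the generators $z_d$, does not help either: the $z_d$ are defined only through their Harish-Chandra images $h_m(z_d)=e_d(\widetilde K_1^2,\ldots,\widetilde K_m^2)$, not as explicit words in $E_i^\pm,K_j$, so there is nothing concrete to feed into $\varphi$.

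The paper closes exactly this gap by bringing in the finite-dimensional type~1 irreducible $U_N$-modules. These carry Gelfand-Tsetlin bases governed by the \emph{same} formulas as the generic modules, so the eigenvalue of $z$ on $[\mu]$ is the same Laurent polynomial in $q^{\mu_{mi}}$ in both settings; on the other hand, the finite-dimensional picture does provide genuine $U_m$-highest-weight vectors, on which the eigenvalue is $h_m(z)$ evaluated at the weight. Matching these two descriptions over the (Zariski-dense) set of integral tableaux identifies the element of $\La_S$. You should either insert this passage through finite-dimensional modules, or else supply an independent argument that $z'=z-h_m(z)$ acts by zero on each $[\mu]$ in the generic case, which is not obvious.
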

\begin{proof}
Let $M$ be a type $1$ finite-dimensional irreducible
representation of $U_N$. As is well-known, it has a
Gelfand-Tsetlin basis, see e.g. \cite{KS}. This means that the
action of $U_N$ on $M$ is given by the exact same formulas as the
generic Gelfand-Tsetlin modules, except that the action of
$E_i^\pm$ on a basis vector is zero if the result lies outside the
support. Thus, when $z\in Z_m$ acts on a basis vector $[\mu]$ of
$M$, the resulting expression will be the same as if $[\mu]$ were
a basis vector of a generic Gelfand-Tsetlin module. That is, they
are given by the same Laurent polynomial in $q^{\mu_{mi}}$. From
the generic case, we know that this Laurent polynomial is
$\varphi(z)$ evaluated by substituting $X_{mi}$ by
$q^{\tilde{\mu}_{mi}}$. From the finite-dimensional case we get
the polynomial $h_m(z)\in\C[K_1^\pm,\ldots,K_m^\pm]$ evaluated by
substituting $K_i$ by $q^{\mu_{mi}}$, $i\in\iv{1}{m}$. This proves
the claim.
\end{proof}

\begin{prp}\label{prp:KLG}
Let $K:=\Frac\big(\varphi(\Gamma)\big)$. Then
$K=L^G$.
\end{prp}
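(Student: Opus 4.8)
The plan is to show the two inclusions $K\subseteq L^G$ and $L^G\subseteq K$ separately. The inclusion $K\subseteq L^G$ is the easy direction: by Proposition~\ref{prp:HC}, for each $m\in\iv{1}{N}$ we have $\varphi|_{Z_m}=\xi_m^{-1}\circ h_m$, and by Lemma~\ref{lem:image_of_HC} the image $h_m(Z_m)$ equals $(U_m^0)^{W_m}$. Since $\xi_m$ intertwines the $W_m$-action on $\La_m$ with that on $U_m^0$, we get $\varphi(Z_m)=\La_m^{W_m}$. As $\Gamma$ is generated by $Z_1,\ldots,Z_N$, the ring $\varphi(\Gamma)$ lies inside the subring of $\La$ generated by $\La_1^{W_1},\ldots,\La_N^{W_N}$, and since $G=\prod_{m=1}^N W_m$ acts on $\La$ factor-wise (the $m$-th factor $W_m$ acting only on $X_{m1},\ldots,X_{mm}$), every such element is $G$-invariant. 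Hence $\varphi(\Gamma)\subseteq \La^G\subseteq L^G$, and taking fraction fields gives $K=\Frac(\varphi(\Gamma))\subseteq L^G$ (using that $L^G$ is already a field).

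For the reverse inclusion $L^G\subseteq K$, the key point is that $K$ already contains the whole fraction field of $\La^G$. Indeed, $L^G=\Frac(\La)^G=\Frac(\La^G)$ because $G$ is a finite group acting on the domain $\La$ (so invariants of the fraction field are fractions of invariants — this is the standard fact recalled in the Preliminaries). Thus it suffices to prove $\La^G\subseteq K$, i.e.\ that every $G$-invariant Laurent polynomial in the $X_{mi}$ is a quotient of elements of $\varphi(\Gamma)$. Since the action of $G$ is factor-wise, $\La^G=\La_1^{W_1}\otimes_\C\cdots\otimes_\C \La_N^{W_N}$, so it is enough to show each $\La_m^{W_m}\subseteq K$. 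But $\La_m^{W_m}=\varphi(Z_m)\subseteq\varphi(\Gamma)\subseteq K$ by the computation in the previous paragraph. Therefore $\La^G\subseteq K$, hence $L^G=\Frac(\La^G)\subseteq K$, and combined with the first inclusion we conclude $K=L^G$.

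The main (and really the only) obstacle is making sure the identification $\varphi(Z_m)=\La_m^{W_m}$ is airtight: this rests on Proposition~\ref{prp:HC} together with the fact that $h_m(Z_m)=(U_m^0)^{W_m}$ from Lemma~\ref{lem:image_of_HC}, and on $\xi_m$ being a $W_m$-equivariant isomorphism $\La_m\xrightarrow{\sim}U_m^0$ — all of which are available. A secondary point to state carefully is that the $G$-action on $\La$ genuinely decomposes as a product over the $N$ factors so that $\La^G$ is the tensor product of the $\La_m^{W_m}$; this is immediate from \eqref{eq:G_action_on_skew_monoid_1}, since $g=(\zeta_1\al_1,\ldots,\zeta_N\al_N)$ acts on $X_{mi}$ only through the $m$-th component $\zeta_m\al_m$. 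With these two observations in place the proof is a short two-inclusion argument.
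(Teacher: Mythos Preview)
Your proof is correct and follows essentially the same approach as the paper: both argue that $\varphi(\Gamma)=\La^G$ via Proposition~\ref{prp:HC} and Lemma~\ref{lem:image_of_HC}, then pass to fraction fields using $\Frac(\La^G)=\Frac(\La)^G=L^G$. The paper compresses this into two sentences, while you unpack the equality $\varphi(\Gamma)=\La^G$ into two inclusions and make the factorization $\La^G=\bigotimes_m \La_m^{W_m}$ explicit; the content is the same.
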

\begin{proof}
It follows from Proposition \ref{prp:HC} and Lemma \ref{lem:image_of_HC} that $\varphi(\Gamma)=\La^G$.
Thus $K=L^G$.
\end{proof}

\begin{prp}
\begin{enumerate}[{\rm (a)}]
\item $\Mscr$ is $K$-separating;
\item $K\subseteq L$ is a finite Galois  extension with Galois group $G$.
\end{enumerate}
\end{prp}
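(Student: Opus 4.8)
The plan is to derive both claims from Proposition~\ref{prp:KLG}, whose proof identifies $K=\Frac\big(\varphi(\Gamma)\big)$ not only with $L^G$ but also with $\Frac(\La^G)$; in particular $\La^G\subseteq K$. Throughout, recall that an element $\mu=\sum_{1\le j\le k\le N-1}n_{kj}\delta^{kj}\in\Mscr$ acts on $\La$, hence on $L$, by $\mu(X_{mi})=q^{-n_{mi}}X_{mi}$ for $m\le N-1$ and $\mu(X_{Ni})=X_{Ni}$.

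For part (a): since $\Mscr$ is a group acting by automorphisms, $\mu_1|_K=\mu_2|_K$ is equivalent to $\mu_2^{-1}\mu_1$ fixing $K$ pointwise, so it suffices to prove that $\mu|_K=\mathrm{id}$ implies $\mu=e$. For each row $m\in\iv{1}{N-1}$ put $p_m:=X_{m1}^2+X_{m2}^2+\cdots+X_{mm}^2$. Then $p_m\in\La^G\subseteq K$: the $W_m$-factor of $G$ permutes the $X_{mi}$ and flips an even number of signs, so squaring kills all signs, and the remaining factors $W_k$ ($k\ne m$) fix each $X_{m\cdot}$. Since $\mu(p_m)=\sum_{i=1}^m q^{-2n_{mi}}X_{mi}^2$, comparing with $p_m$ inside the Laurent polynomial ring $\La$, where the monomials $X_{m1}^2,\ldots,X_{mm}^2$ are $\C$-linearly independent, yields $q^{-2n_{mi}}=1$ for every $i$; as $q$ is not a root of unity, $n_{mi}=0$. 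Ranging over all $m\in\iv{1}{N-1}$ forces $\mu=e$. (It is essential to use the squares $X_{mi}^2$, which sum to a $G$-invariant element detecting each coordinate $n_{mi}$ separately, rather than the product $X_{m1}\cdots X_{mm}$, which only sees $\sum_i n_{mi}$.)

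For part (b): I would appeal to Artin's theorem — a finite group $G$ acting faithfully by automorphisms on a field $L$ makes $L/L^G$ a finite Galois extension with $\Gal(L/L^G)=G$ and $[L:L^G]=|G|$. By Proposition~\ref{prp:KLG}, $K=L^G$, so it remains only to check that $G=\prod_{m=1}^N W_m$ acts faithfully on $L=\C(X_{mi}\mid 1\le i\le m\le N)$. The factors $W_m$ act on pairwise disjoint subsets of the algebraically independent generators, so faithfulness of $G$ reduces to faithfulness of each $W_m=W(D_m)$ on $\C(X_{m1},\ldots,X_{mm})$; and if $\zeta_m\alpha_m\in W(D_m)$ fixes every $X_{mi}$, then the identity $(-1)^{\alpha_{mi}}X_{m\zeta_m(i)}=X_{mi}$ among monomials forces $\zeta_m(i)=i$ and $\alpha_{mi}=0$ for all $i$ (the claim is vacuous when $m=1$, since $W(D_1)$ is trivial). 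Hence $G$ acts faithfully, Artin's theorem applies, and $K\subseteq L$ is a finite Galois extension with $\Gal(L/K)=G$, of degree $\prod_{m=1}^N 2^{m-1}m!$.

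I do not expect a genuine obstacle here: the substantive point in part (a) is the choice of separating invariants, while part (b) is Artin's theorem together with a one-line faithfulness check, everything else being supplied by Proposition~\ref{prp:KLG}. The only judgement call is how much of the standard finite Galois theory to reprove for the sake of self-containedness.
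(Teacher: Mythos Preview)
Your proof of part (a) is correct and coincides with the paper's: both act with $\Mscr$ on the invariants $X_{m1}^2+\cdots+X_{mm}^2\in\La^G\subseteq K$ for $m\in\iv{1}{N-1}$ and use that $q$ is not a root of unity.

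For part (b) you take a genuinely different route. The paper argues that $L$ is the splitting field over $K$ of the explicit polynomial
\[
p(x)=\prod_{m=1}^N (x^2-X_{m1}^2)\cdots (x^2-X_{mm}^2)(x-X_{m1}\cdots X_{mm}),
\]
whence $K\subseteq L$ is normal, and separability comes for free in characteristic zero. Your approach instead verifies faithfulness of the $G$-action on $L$ and invokes Artin's theorem. Both are valid; yours is arguably tidier, since Artin's theorem delivers finiteness, Galois-ness, and the identification $\Gal(L/K)=G$ in one stroke, whereas the splitting-field argument still tacitly needs faithfulness (or an equivalent step) to pin down the Galois group as $G$ rather than a quotient. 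The paper's version has the minor advantage of making the structure of $L$ over $K$ concrete via the polynomial $p(x)$, which exhibits how the sign-changes in $W_m$ are absorbed by the quadratic factors and the product $X_{m1}\cdots X_{mm}$.
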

\begin{proof}
(a) That $\Mscr$ is $K$-separating is easily seen by acting with
$\Mscr$ on $X_{m1}^2+\cdots+X_{mm}^2\in\La^G\subseteq K$ for
$m\in\iv{1}{N-1}$ and using that $q$ is not a root of unity.

(b) Proposition~\ref{prp:KLG} gives $K=L^G$. The field extension $K\subseteq L$ is normal since $L$ is the splitting field of the following polynomial in $K[x]$:
\[p(x)=\prod_{m=1}^N (x^2-X_{m1}^2)\cdots (x^2-X_{mm}^2)(x-X_{m1}\cdots X_{mm}).\]
Thus, since $\chara K=0$, $K\subseteq L$ is a Galois extension.
\end{proof}

We are now ready to prove that $U_q(\mf{gl}_N)$ can be realized as
a Galois $\Gamma$-ring.
\begin{thm} \label{thm:UqGalois}
The image of $\varphi$ is a
Galois $\varphi(\Gamma)$-ring in $(L\ast \Mscr)^G$.
\end{thm}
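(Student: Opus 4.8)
The plan is to deduce the theorem directly from Proposition~\ref{prp:GaloisRing}, all of whose hypotheses have been assembled in this section. Set $\Gamma':=\varphi(\Gamma)$ and $K:=\Frac(\Gamma')$. By Proposition~\ref{prp:KLG} we have $K=L^G$; as shown above, $K\subseteq L$ is a finite Galois extension with $\Gal(L/K)=G$, the monoid $\Mscr$ is $K$-separating, and $\im\varphi\subseteq(\La_S\ast\Mscr)^G\subseteq(L\ast\Mscr)^G$. Moreover $\Gamma'=\La^G$ (by Proposition~\ref{prp:HC} and Lemma~\ref{lem:image_of_HC}) is a finitely generated integral domain contained in $(L\ast\Mscr)^G$ after identifying $\La$ with $\La\cdot e$. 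Finally $\im\varphi$ is a finitely generated $\Gamma'$-subring of $(L\ast\Mscr)^G$: since $U_N$ is generated as a $\C$-algebra by the finite set $\{E_m^\pm\mid m\in\iv{1}{N-1}\}\cup\{K_j^{\pm1}\mid j\in\iv{1}{N}\}$, the ring $\im\varphi$ is generated as a $\Gamma'$-ring by $u_m^\pm:=\varphi(E_m^\pm)$ together with the $\varphi(K_j^{\pm1})$.

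It remains to verify that the union of the supports of these generators generates $\Mscr$ as a monoid. Since $\varphi(K_j^{\pm1})=(A_j^0)^{\pm1}e$ has support $\{e\}$, only the $u_m^\pm$ contribute. From $\varphi(E_m^\pm)=\sum_{i}(\pm\delta^{mi})A_{mi}^\pm$ and the fact that each coefficient $A_{mi}^\pm$ is a \emph{nonzero} element of the domain $\La_S$ — its numerator is a product of the nonzero Laurent polynomials $X_{m\pm1,j}X_{mi}^{-1}-X_{m\pm1,j}^{-1}X_{mi}$, and its denominator is a product of elements of $S$ up to units of $\La$ — we get
\[\Supp(u_m^\pm)=\{\pm\delta^{mi}\mid 1\le i\le m\},\qquad m\in\iv{1}{N-1}.\]
Therefore $\bigcup_{m=1}^{N-1}\big(\Supp(u_m^+)\cup\Supp(u_m^-)\big)=\{\pm\delta^{mi}\mid 1\le i\le m\le N-1\}$. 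By construction $\{\delta^{mi}\mid 1\le i\le m\le N-1\}$ is a $\Z$-basis of the free abelian group $\Mscr=\Z^{N(N-1)/2}$, so this set generates $\Mscr$ as a monoid — indeed as a group, since for each basis element both it and its inverse occur.

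Hence Proposition~\ref{prp:GaloisRing} applies and yields that $\im\varphi$ is a Galois $\varphi(\Gamma)$-ring in $(L\ast\Mscr)^G$, as claimed. The weight of the argument rests on the earlier structural results — the construction of the embedding $\varphi$ and Proposition~\ref{prp:GaloisRing} itself — so the only genuinely new ingredient here is the bookkeeping of supports; there the point to watch is that no coefficient $A_{mi}^\pm$ degenerates to zero and that the tableau units $\pm\delta^{mi}$ which appear sweep out an entire $\Z$-basis of $\Mscr$ rather than merely spanning a proper submonoid.
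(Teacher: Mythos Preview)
Your proof is correct and follows exactly the paper's approach: apply Proposition~\ref{prp:GaloisRing} with the $u_i$ taken to be the images under $\varphi$ of the generators $E_m^\pm, K_j^{\pm 1}$ of $U_N$, after noting that the required setup of Section~\ref{sec:GaloisRings} has been established. You have simply filled in the details the paper leaves implicit, in particular the verification that each $A_{mi}^\pm$ is nonzero so that the supports of the $\varphi(E_m^\pm)$ sweep out a full $\Z$-basis of $\Mscr$ together with its negatives.
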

\begin{proof}
Since we have proved that we have the required setup of Section
\ref{sec:GaloisRings}, then the claim follows from
Proposition~\ref{prp:GaloisRing} by taking $u_i$ to be the images
under $\varphi$ of the generators $E_i^\pm, K_j$ of $U_N$.
\end{proof}

\section{Proof of the quantum Gelfand-Kirillov conjecture}
\label{sec:proof_of_qGK}

 In this section we prove Theorem~I by showing that the quantum Gelfand-Kirillov
conjecture follows from a positive solution to the $q$-difference
Noether problem.

By Theorem~\ref{thm:UqGalois} we have
\begin{equation}\label{eq:qGKstep1}
\begin{gathered}
\Frac(U_N)\simeq\Frac\big((L\ast\Mscr)^G\big)\simeq
\big(\Frac(\La\ast\Mscr)\big)^G  \\
\simeq \Frac\Big(
\bigotimes_{m=1}^{N-1}
 \big(\Frac(\La_m\ast \Z^m)\big)^{W_m} \otimes
 (\Frac \La_N)^{W_N}\Big),
\end{gathered}
\end{equation}
 where $W_m$ is the Weyl group of type $D_m$,
 $\La_m=\C[X_{m1}^{\pm1},\ldots,X_{mm}^{\pm 1}]$ and $\otimes=\otimes_\C$.

\begin{lem}\label{lem:iso}
There is an algebra isomorphism
\[\iota:\C_q(\bar x,\bar y)\overset{\sim}{\to}\Frac(\La_m\ast\Z^m)\]
where $\bar x=(x_1,\ldots,x_m)$ and $\bar y=(y_1,\ldots,y_m)$,  uniquely defined by
\[x_i\mapsto X_{mi}^{-1},\quad y_i\mapsto X_{mi}^{-1}\delta^{mi},\quad\forall i\in\iv{1}{n}.\]
Moreover, this isomorphism commutes with the $W_m$-action defined on both sides.
\end{lem}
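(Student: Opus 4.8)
The plan is to produce $\iota$ in four moves: construct it first as a homomorphism out of the quantum affine space $\mathcal{O}_q(\C^{2m})$; check it is injective by tracking what it does to the monomial basis; extend it to the skew field using the Ore property; and conclude it is onto because its image is a division ring already containing $\La_m\ast\Z^m$. The $W_m$-equivariance is then verified directly on the generators.

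First I would record the only structural fact needed: in $\La_m\ast\Z^m$ one has $\delta^{mi}X_{mj}=q^{-\delta_{ij}}X_{mj}\delta^{mi}$, equivalently $\delta^{mi}X_{mj}^{-1}=q^{\delta_{ij}}X_{mj}^{-1}\delta^{mi}$, directly from the action of $\Mscr$ on $\La$. Using this, a short computation shows that the elements $X_{mi}^{-1}$ and $X_{mi}^{-1}\delta^{mi}$ of $\La_m\ast\Z^m$ pairwise commute among themselves and satisfy $\big(X_{mi}^{-1}\delta^{mi}\big)\big(X_{mj}^{-1}\big)=q^{\delta_{ij}}\big(X_{mj}^{-1}\big)\big(X_{mi}^{-1}\delta^{mi}\big)$ --- that is, exactly the defining relations $[x_i,x_j]=[y_i,y_j]=0$ and $y_ix_j=q^{\delta_{ij}}x_jy_i$ of $\mathcal{O}_q(\C^{2m})$. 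Hence there is a well-defined $\C$-algebra homomorphism $\iota_0\colon\mathcal{O}_q(\C^{2m})\to\La_m\ast\Z^m$ with $x_i\mapsto X_{mi}^{-1}$ and $y_i\mapsto X_{mi}^{-1}\delta^{mi}$.

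Next I would prove $\iota_0$ is injective. The monomials $x_1^{a_1}\cdots x_m^{a_m}y_1^{b_1}\cdots y_m^{b_m}$, $(a,b)\in\Z_{\ge 0}^{2m}$, form a $\C$-basis of $\mathcal{O}_q(\C^{2m})$, and using $\big(X_{mi}^{-1}\delta^{mi}\big)^{b}=q^{b(b-1)/2}X_{mi}^{-b}\delta^{b\,mi}$ together with the fact that the images of the $x_i$ commute, one computes that $\iota_0$ sends such a monomial to a nonzero scalar times $\big(\prod_i X_{mi}^{-a_i-b_i}\big)\big(\sum_i b_i\,\delta^{mi}\big)$. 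Since $\La_m\ast\Z^m$ is free as a left $\La_m$-module on the group $\Z^m$, these images are $\C$-linearly independent, so $\iota_0$ is injective. As both $\mathcal{O}_q(\C^{2m})$ (a quantum affine space) and $\La_m\ast\Z^m$ (a quantum torus) are Ore domains, the injective homomorphism $\iota_0$ extends uniquely to a homomorphism $\iota\colon\C_q(\bar x,\bar y)=\Frac\big(\mathcal{O}_q(\C^{2m})\big)\to\Frac(\La_m\ast\Z^m)$ via $as^{-1}\mapsto\iota_0(a)\iota_0(s)^{-1}$. For surjectivity, $\im\iota$ is the isomorphic image of a division ring, hence a division subring of $\Frac(\La_m\ast\Z^m)$; it contains $X_{mi}=\iota(x_i)^{-1}$ and $\delta^{mi}=\iota(x_i^{-1}y_i)$ for every $i$, hence it contains $\La_m\ast\Z^m$ and therefore all of $\Frac(\La_m\ast\Z^m)$. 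Thus $\iota$ is an isomorphism, and it is the unique one with the prescribed effect on $x_i,y_i$, since these generate $\C_q(\bar x,\bar y)$ as a skew field.

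It remains to check $W_m$-equivariance, which I would do on the generators. With $W_m=S_m\ltimes\mathcal{E}_m$ acting on $\C_q(\bar x,\bar y)$ as in \eqref{eq:Wm_action} --- i.e.\ $\zeta(x_i)=x_{\zeta(i)}$, $\zeta(y_i)=y_{\zeta(i)}$ for $\zeta\in S_m$ and $\al(x_i)=(-1)^{\al_i}x_i$, $\al(y_i)=(-1)^{\al_i}y_i$ for $\al\in\mathcal{E}_m$ --- and on $\La_m\ast\Z^m$ as in \eqref{eq:G_action_on_skew_monoid}, i.e.\ $\zeta(X_{mi})=X_{m\zeta(i)}$, $\zeta(\delta^{mi})=\delta^{m\zeta(i)}$, $\al(X_{mi})=(-1)^{\al_i}X_{mi}$, $\al(\delta^{mi})=\delta^{mi}$, one reads off $\iota(\zeta x_i)=X_{m\zeta(i)}^{-1}=\zeta\,\iota(x_i)$, $\iota(\al x_i)=(-1)^{\al_i}X_{mi}^{-1}=\al\,\iota(x_i)$, $\iota(\zeta y_i)=X_{m\zeta(i)}^{-1}\delta^{m\zeta(i)}=\zeta\,\iota(y_i)$ and $\iota(\al y_i)=(-1)^{\al_i}X_{mi}^{-1}\delta^{mi}=\al\,\iota(y_i)$; since $\iota$ intertwines the two actions on a generating set and $W_m$ acts by automorphisms of both skew fields, $\iota$ is $W_m$-equivariant. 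The one point demanding care throughout is the bookkeeping of powers of $q$ --- ensuring the relation comes out as $y_ix_i=qx_iy_i$ rather than with $q^{-1}$, and that the $\mathcal{E}_m$-action on $y_i$ carries the sign $(-1)^{\al_i}$, precisely the sign picked up by $X_{mi}^{-1}\delta^{mi}$; everything else is routine, the only external ingredient being the standard fact that quantum affine spaces and quantum tori are Ore domains.
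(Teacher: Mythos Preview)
Your proof is correct and follows essentially the same route as the paper's: verify that the proposed images satisfy the defining relations of $\mathcal{O}_q(\C^{2m})$, conclude bijectivity, and read off the $W_m$-equivariance from the definitions \eqref{eq:Wm_action} and \eqref{eq:G_action_on_skew_monoid}. You supply considerably more detail than the paper, which simply asserts that $\iota$ ``is clearly bijective'' and that the equivariance ``is clear by the definitions''; your explicit injectivity argument via the monomial basis and surjectivity via $X_{mi}=\iota(x_i)^{-1}$, $\delta^{mi}=\iota(x_i^{-1}y_i)$ are exactly what is needed to unpack those claims.
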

\begin{proof}
We have $[X_{mi},X_{mj}]=0=[\delta^{mi},\delta^{mj}]$ for any
$i,j\in\iv{1}{m}$. By the definition of the action of $\Mscr$ on
$\La$ we have the commutation relation
$\delta^{mi}X_{mj}=q^{-\delta_{ij}}X_{mj}\delta^{mi}$, hence
$X_{mj}^{-1}\delta^{mj} X_{mi}^{-1}=q^{\delta_{ij}} X_{mi}^{-1}X_{mj}^{-1}\delta^{mj}$
 for all
$i,j\in\iv{1}{m}$. Since $y_jx_i=q^{\delta_{ij}}x_iy_j$,
 this proves that the map $\iota$ is well-defined, and
is clearly bijective. That it intertwines the $W_m$-actions is clear by the
definitions, \eqref{eq:Wm_action} and \eqref{eq:G_action_on_skew_monoid},
of the respective $W_m$-actions.
\end{proof}

Hence Lemma~\ref{lem:iso} reduces the quantum Gelfand-Kirillov
conjecture for $\mf{gl}_N$ to the $q$-difference Noether problem
for $W_N$. By Theorem~\ref{thm:qNoether_D} the right hand side
of \eqref{eq:qGKstep1} is isomorphic to
\begin{equation}
\Frac\Big( \bigotimes_{m=1}^{N-1}
 \Frac\big(\C_q[x,y]\otimes_\C \C_{q^2}[x,y]^{\otimes_\C (m-1) }\big)
 \otimes_\C \Frac(\La_N)^{W_N}\Big).
\end{equation}
Since $W_N$ is the Weyl
group of type $D_N$, it is in particular a complex reflection
group. Thus, by the Chevalley-Shephard-Todd theorem,
$\C[X_{N1},\ldots,X_{NN}]^{W_N}$ is a polynomial algebra in $N$ variables. Hence
$\Frac(\La_N)^{W_N}$ is isomorphic to a field $\K=\C(Z_1,\ldots,Z_N)$ of rational
functions in $N$ variables over $\C$. Thus
\begin{multline}
\Frac\Big( \bigotimes_{m=1}^{N-1}
 \Frac\big(\C_q[x,y]\otimes_\C \C_{q^2}[x,y]^{\otimes_\C (m-1)}\big)
 \otimes \Frac(\La_N)^{W_N} \Big) \\ \simeq
 \Frac\big(\K_q[x,y]^{\otimes_\K (N-1)}\otimes_\K \K_{q^2}[x,y]^{\otimes_\K (N-1)(N-2)/2)}\big)
\end{multline}
where $\K=\C(Z_1,\ldots,Z_N)$.
%
The proof of Theorem~I is completed.

\section{The quantum Gelfand-Kirillov conjecture for \texorpdfstring{$U_q^{\mathrm{ext}}(\mf{sl}_N)$}{Uqext(slN)}}
Let $U_q(\mf{sl}_N)$ be the quantized enveloping algebra of $\mf{sl}_N$ \cite{KS}.
The \emph{extented quantum group} $U_q^{\mathrm{ext}}(\mf{sl}_N)$ can be defined as the quotient of $U_q(\mf{gl}_N)$ by the ideal $\langle K_1K_2\cdots K_N-1\rangle$ (see
 \cite[Sec.~8.5.3]{KS}). Denoting the images of $E_i^{\pm}$ and $K_j$ by $E_i^{\pm}$ and $\widehat{K}_j$ respectively, there is an embedding
\begin{equation}
U_q(\mf{sl}_N)\longrightarrow U_q^{\mathrm{ext}}(\mf{sl}_N)
\end{equation}
given by the usual embedding $U_q(\mf{sl}_N)\to U_q(\mf{gl}_N)$ followed by the canonical projection. That is,
\begin{align*}
E_i^\pm &\longmapsto E_i^\pm,\\
K_i &\longmapsto \widehat{K}_i\widehat{K}_{i+1}^{-1},
\end{align*}
for $i\in\iv{1}{N-1}$.
Moreover, as is observed in \cite[Sec.~8.5.3]{KS},  $U_q^{\mathrm{ext}}(\mf{sl}_N)$ is isomorphic to the algebra obtained from $U_q(\mf{sl}_N)$ by adjoining the $N$:th roots
\begin{equation}\label{eq:Nth_root}
(K_1K_2^2\cdots K_{N-1}^{N-1})^{\pm 1/N}.
\end{equation}
The isomorphism maps $E_i$ to $E_i$ and $\widehat{K}_i$ to $K_i$ for $i\in\iv{1}{N-1}$ and maps $\widehat{K}_N$ to the element \eqref{eq:Nth_root}.

The following result shows that the quantum Gelfand-Kirillov conjecture holds for $U_q^{\mathrm{ext}}(\mf{sl}_N)$.
\begin{thm} \label{thm:qGK_ext}
There exists a $\C$-algebra isomorphism
\begin{equation} \label{eq:qGK_ext}
\Frac\big(U_q^{\mathrm{ext}}(\mf{sl}_N)\big)  \\ \simeq
 \Frac \Big( \K_q[x,y]^{\otimes_\K (N-1)}\otimes_\K
  \K_{q^2}[x,y]^{\otimes_\K (N-1)(N-2)/2} \Big)
\end{equation}
where $\K=\C(Z_1,\ldots,Z_{N-1})$.
\end{thm}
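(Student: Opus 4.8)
The plan is to deduce this from Theorem~I for $U_q(\mf{gl}_N)$ together with the fact that $U_q^{\mathrm{ext}}(\mf{sl}_N)$ is the quotient of $U_q(\mf{gl}_N)$ by the central element $I_N := K_1K_2\cdots K_N - 1$. The key point is that in the Galois-ring realization of Section~5, the center $Z_N$ of $U_q(\mf{gl}_N)$ corresponds, via Proposition~\ref{prp:HC} and Lemma~\ref{lem:image_of_HC}, to $(U_N^0)^{W_N}\simeq \C[z_1,\ldots,z_{N-1}][z_N^{\pm 1}]$, and $I_N$ corresponds (up to the normalization $\widetilde K_i = q^{-i}K_i$, i.e.\ up to a power of $q$) to $z_N = \widetilde K_1\cdots\widetilde K_N$. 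Thus passing to the quotient by $\langle I_N-1\rangle$ amounts, on the level of the skew monoid ring, to specializing the single central generator $z_N$ — equivalently the element $X_{N1}X_{N2}\cdots X_{NN}\in\La_N$ — to a scalar, while leaving all the other generators untouched.

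Concretely, I would first recall from the proof of Theorem~I that $\Frac(U_N)$ is isomorphic to $\Frac\big(\bigotimes_{m=1}^{N-1}(\Frac(\La_m\ast\Z^m))^{W_m}\otimes (\Frac\La_N)^{W_N}\big)$, and that the last tensor factor $(\Frac\La_N)^{W_N}$ is precisely the field of fractions of the (commutative) image of the center $Z_N$ under $\varphi$, hence $\simeq \C(Z_1,\ldots,Z_{N-1},Z_N)$ with $Z_N$ corresponding (up to a $q$-power) to $X_{N1}\cdots X_{NN}$ and $Z_1,\ldots,Z_{N-1}$ to the $e_d(\widetilde K_i^2)$. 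The quotient map $U_q(\mf{gl}_N)\twoheadrightarrow U_q^{\mathrm{ext}}(\mf{sl}_N)$ kills $I_N-1$; since the induced map on skew fields of fractions of the subalgebra generated by the $E_i^\pm$ and $\widehat K_i$ (which is all of $U_q^{\mathrm{ext}}(\mf{sl}_N)$) factors through setting $Z_N$ equal to a nonzero scalar, we obtain
\[
\Frac\big(U_q^{\mathrm{ext}}(\mf{sl}_N)\big) \simeq
\Frac\Big( \bigotimes_{m=1}^{N-1}\big(\Frac(\La_m\ast\Z^m)\big)^{W_m}\Big),
\]
with the $m=N$ factor now contributing only the $W_N$-invariant part of $\La_N$ after quotienting by $X_{N1}\cdots X_{NN}-c$. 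By Chevalley--Shephard--Todd, $\C[X_{N1},\ldots,X_{NN}]^{W_N}$ is polynomial on generators that may be taken to be $e_1(X_{Ni}^2),\ldots,e_{N-1}(X_{Ni}^2)$ and $X_{N1}\cdots X_{NN}$, so imposing $X_{N1}\cdots X_{NN}=c$ leaves a field $\C(Z_1,\ldots,Z_{N-1})$ of rational functions in $N-1$ variables. Then Lemma~\ref{lem:iso} and Theorem~\ref{thm:qNoether_D} identify each factor $(\Frac(\La_m\ast\Z^m))^{W_m}\simeq \C_q(\bar x,\bar y)^{W(D_m)}\simeq \Frac(\C_q[x,y]\otimes_\C \C_{q^2}[x,y]^{\otimes(m-1)})$ exactly as in the proof of Theorem~I, and collecting the quantum plane factors over $m=1,\ldots,N-1$ gives $N-1$ copies of $\K_q[x,y]$ and $\sum_{m=1}^{N-1}(m-1)=(N-1)(N-2)/2$ copies of $\K_{q^2}[x,y]$ with $\K=\C(Z_1,\ldots,Z_{N-1})$, which is \eqref{eq:qGK_ext}.

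The step I expect to require the most care is the rigorous justification that quotienting $U_q(\mf{gl}_N)$ by $\langle I_N-1\rangle$ corresponds, after passing to fields of fractions through the Galois-ring embedding $\varphi$, to simply evaluating the central variable $X_{N1}\cdots X_{NN}$ at a scalar — i.e.\ that $\varphi$ descends to an embedding of $U_q^{\mathrm{ext}}(\mf{sl}_N)$ into (the $G$-invariants of) the skew monoid ring over the localized Laurent algebra in the \emph{remaining} variables, and that $\varphi(Z_N)$ really is $I_N^{-1}$ up to an invertible scalar so that setting it equal to $1$ is harmless. This should follow from Proposition~\ref{prp:HC}, Lemma~\ref{lem:image_of_HC}, and the fact that the Gelfand-Tsetlin subalgebra $\Gamma$ contains $Z_N$, but one must check that the localization set $S$ and the $G$-action are compatible with the specialization. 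Alternatively, and perhaps more cleanly, one can avoid re-running the Galois machinery and instead argue directly at the level of \eqref{eq:qGKstep1}: the generator $Z_N$ of the last tensor factor is a central element of $\Frac(U_N)$, the quotient $U_q^{\mathrm{ext}}(\mf{sl}_N)$ is obtained by setting it to $1$, and setting a free central variable to a scalar in a tensor product simply deletes that factor, reducing the number of rational-function parameters from $N$ to $N-1$ while leaving the quantum-plane part unchanged.
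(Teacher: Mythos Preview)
Your proposal is correct and follows essentially the same route as the paper: use Proposition~\ref{prp:HC} to compute $\varphi(K_1K_2\cdots K_N)=q^{N(N+1)/2}X_{N1}\cdots X_{NN}$, observe that this product can be taken as one of the algebraically independent generators of $\C[X_{N1},\ldots,X_{NN}]^{W_N}$, and hence that setting it to $1$ reduces the center from $\C(Z_1,\ldots,Z_N)$ to $\C(Z_1,\ldots,Z_{N-1})$ while the rest of the isomorphisms from Section~\ref{sec:proof_of_qGK} go through unchanged. (Minor slip: you write $I_N:=K_1\cdots K_N-1$ but later refer to $I_N-1$; the paper's convention is $I_N=K_1\cdots K_N$.)
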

\begin{proof}
The element $K_1K_2\cdots K_N$ is a central element of $U_N$ and,
by Proposition \ref{prp:HC},
\[\varphi(K_1K_2\cdots K_N) = q^{N(N+1)/2}X_{N1}X_{N2}\cdots X_{NN}\in (\La_S\ast \mathcal{M})^G.\]
Therefore, the result follows by the isomorphisms in Section \ref{sec:proof_of_qGK},
by using that $q^{N(N+1)/2}X_{N1}X_{N2}\cdots X_{NN}$ can be taken as one
of the algebraically independent generators of $\C[X_{N1},\ldots,X_{NN}]^{W_N}$
and thus that
\[\La_N^{W_N}/\langle q^{N(N+1)/2}X_{N1}X_{N2}\cdots X_{NN}-1\rangle\simeq \C[Z_1,\Z_2,\ldots,Z_{N-1}].\]
\end{proof}

\subsection{Alev and Dumas' result for \texorpdfstring{$\mf{sl}_3$}{sl3}}
Recall the multiparameter quantized Weyl algebras $A_n^{\bar
q,\Lambda}(\K)$ from Section \ref{sec:qWskewfields}. In
\cite[Sec.~4.4]{AD}, the authors define a certain algebra, denoted
${U}_q^{AD}(\mf{sl}_3)$, and prove in \cite[Thm.~4.6]{AD} that
\begin{equation}\label{eq:AD_iso}
\Frac\big({U}_q^{AD}(\mf{sl}_3)\big)\simeq \Frac\Big(A_3^{\bar
q,\Lambda}\big(\C(Z_1,Z_2)\big)\Big),
\end{equation}
where $\bar q=(q,q,q^4)$ and $\La=(\la_{ij})$ with $\la_{ij}=1$ for all $i,j$ and $\C(Z_1,\Z_2)$ is the field of rational functions in two variables.
Following \cite{KS}, let $\breve{U}_{q^2}(\mf{sl}_3)$ denote the algebra
with generators $K_1^{\pm 1},K_2^{\pm 1}$, $E_1^\pm, E_2^\pm$ and relations
\begin{gather*}
K_iK_i^{-1}=K_i^{-1}K_i=1, \quad [K_i,K_j]=0,\quad\forall i,j\in\{1,2\},\\
\begin{aligned}
K_iE_j^\pm K_i^{-1} &= q^{\pm a_{ij}} E_j^{\pm}, \quad\forall i,j\in\{1,2\},\\
[E_i^+,E_j^-]&=\delta_{ij}\frac{K_i^2-K_i^{-2}}{q^2-q^{-2}}, \quad\forall i,j\in\{1,2\},\\
[E_i^{\pm},E_j^{\pm}]&=0,\quad |i-j|>1,
\end{aligned}\\
(E_i^\pm)^2E_j^\pm -(q^2+q^{-2})E_i^\pm E_j^\pm E_i^\pm + E_j^\pm (E_i^\pm)^2 =0, \quad |i-j|=1.
\end{gather*}
where $(a_{ij})=\left[\begin{smallmatrix} 2 &-1\\ -1&
2\end{smallmatrix}\right]$ is the Cartan matrix of $\mf{sl}_3$.
Alev and Dumas' algebra ${U}_q^{AD}(\mf{sl}_3)$ is obtained from
$\breve{U}_{q^2}(\mf{sl}_3)$ by adjoining $(K_1^2K_2)^{\pm 1/3}$. By
viewing $U_{q^2}^{\mathrm{ext}}(\mf{sl}_3)$ as an extension of
$U_{q^2}(\mf{sl}_3)$, we observe that there is a homomorphism
\begin{align*}
U_{q^2}^{\mathrm{ext}}(\mf{sl}_3) & \longrightarrow {U}_q^{AD}(\mf{sl}_3) \\
E_i^\pm &\longmapsto E_i^\pm, \quad i\in\{1,2\},\\
K_i & \longmapsto K_i^2,\quad i\in\{1,2\},\\
(K_1K_2^2)^{1/3} &\longmapsto (K_1^2K_2)^{1/3}\cdot K_2,
\end{align*}
Therefore we may equivalently view ${U}_q^{AD}(\mf{sl}_3)$ as
being obtained from $U_{q^2}^{\mathrm{ext}}(\mf{sl}_3)$ by
adjoining $K_1^{1/2}$ and $K_2^{1/2}$.

So let us define ${U}_q^{AD}(\mf{sl}_N)$ for general $N$ as the
algebra obtained from $U_{q^2}^{\mathrm{ext}}(\mf{sl}_N)$ by
adjoining $K_j^{1/2}$ for $j\in\iv{1}{N-1}$. By Proposition
\ref{prp:HC},
\[\varphi(K_1K_2\cdots K_m) = q^{m(m+1)/2}X_{m1}X_{m2}\cdots X_{mm}\]
for any $m\in\iv{1}{N}$. Furthermore, the isomorphism in Theorem \ref{thm:qNoether_main_result} maps $\widehat{X}_1=e_n=x_1x_2\cdots x_n$
to $x_1\in\K_q(\bar x,\bar y)$. Following through the isomorphisms, this means
that for $m\in\iv{1}{N-1}$, $K_1\cdots K_m$ is mapped under the map
\begin{equation}
\Frac\big(U_{q^2}^{\mathrm{ext}}(\mf{sl}_N)\big)  \overset{\sim}{\longrightarrow}
 \Frac \Big( \K_{q^2}[x,y]^{\otimes_\K (N-1)}\otimes_\K
  \K_{q^4}[x,y]^{\otimes_\K (N-1)(N-2)/2} \Big)
\end{equation}
to some nonzero $\K$-multiple of the element
\[ x_m = 1^{\otimes (m-1)}\otimes x\otimes 1^{\otimes (N-1)(N-2)/2-m}. \]
Therefore, adjoining the square roots $K_j^{1/2}$ for $j\in\iv{1}{N-1}$,
or equivalently $(K_1K_2\cdots K_j)^{1/2}$ for $j\in\iv{1}{N-1}$, to
 $U_{q^2}^{\mathrm{ext}}(\mf{sl}_N)$, corresponds to adjoining
 the square roots $x_m^{1/2}$ for $m=1,\ldots,N-1$. This shows that
\begin{equation}
\Frac\big({U}_q^{AD}(\mf{sl}_N)\big) \simeq
 \Frac \Big( \K_{q}[x,y]^{\otimes_\K (N-1)}\otimes_\K
  \K_{q^4}[x,y]^{\otimes_\K (N-1)(N-2)/2} \Big)
\end{equation}
In particular, for $N=3$ we recover \eqref{eq:AD_iso}, bearing in mind
Proposition~\ref{prp:qWeyl_iso}.

\section{Appendix}

\subsection{Proof of Proposition~\ref{prp:tcommutes}}

The statement is equivalent to proving that
\begin{equation}\label{eq:PXPY}
[P(X),P(Y)]=0
\end{equation}
in $C_n^q[X,Y]$. Put
\begin{equation}
Q_j(X)= \left(\prod_{k\in\{1,\ldots,n\}\setminus\{j\}} \frac{X-x_k}{x_j-x_k}\right)y_j.
\end{equation}
so that $P(X)=\sum_{i=1}^n Q_i(X)$.
Observe that
\begin{equation}\label{eq:Qsymmetry}
w\big(Q_j(X)\big) = Q_{w(j)}(X),\qquad w\in S_n.
\end{equation}
Thus, to prove \eqref{eq:PXPY}, it is enough to show the following two identities:
\begin{align}
\label{eq:Qfirst}
[Q_1(X),Q_1(Y)]&=0,\\
\label{eq:Qsecond}
[Q_1(X),Q_2(Y)]+[Q_2(X),Q_1(Y)]&=0.
\end{align}
Since $y_1 x_i = q^{\delta_{i1}} x_i y_1$ we have
\begin{align*}
Q_1(X)Q_1(Y)&=
\prod_{k=2}^n \frac{X-x_k}{x_1-x_k}y_1 \prod_{k=2}^n\frac{Y-x_k}{x_1-x_k}y_1=\\
&=\prod_{2\le k\le n} \frac{(X-x_k)(Y-x_k)}{(x_1-x_k)(q
x_1-x_k)}y_1^2
\end{align*}
which is symmetric in $X,Y$. This proves \eqref{eq:Qfirst}.

Next we prove \eqref{eq:Qsecond}.
Let
\[R_j(X)=\prod_{k=3}^n \frac{X-x_k}{x_j-x_k},\qquad j=1,2.\]
Then
\[Q_1(X)=\frac{X-x_2}{x_1-x_2} R_1(X) y_1,\qquad Q_2(X)=\frac{X-x_1}{x_2-x_1} R_2(X) y_2,\]
\[[y_1, R_2(X)]=[y_2,R_1(X)]=0, \]
and
\[R_1(X)R_2(Y)=R_1(Y)R_2(X)=R_2(X)R_1(Y)=R_2(Y)R_1(X).\]
We have
\begin{align*}
&[Q_1(X),Q_2(Y)]+[Q_2(X),Q_1(Y)]=
Q_1(X)Q_2(Y)-Q_1(Y)Q_2(X)\\ &\quad+ Q_2(X)Q_1(Y) - Q_2(Y)Q_1(X) \\
&=\frac{X-x_2}{x_1-x_2}  \cdot \frac{Y-qx_1}{x_2-qx_1}
R_1(X)R_2(Y) y_1y_2 -\frac{Y-x_2}{x_1-x_2}  \cdot
\frac{X-qx_1}{x_2-qx_1} R_1(Y)R_2(X) y_1y_2
\\
&\quad+
\frac{X-x_1}{x_2-x_1} \cdot
\frac{Y-qx_2}{x_1-qx_2} R_2(X)R_1(Y) y_1y_2
-
\frac{Y-x_1}{x_2-x_1} \cdot
\frac{X-qx_2}{x_1-qx_2} R_2(Y)R_1(X) y_1y_2
 \\
&=\Bigg(\frac{(X-x_2)(Y-qx_1)-(Y-x_2)(X-qx_1)}{(x_1-x_2)(x_2-qx_1)}\\
&\qquad\qquad+
\frac{(X-x_1)(Y-qx_2)-(Y-x_1)(X-qx_2)}{(x_2-x_1)(x_1-qx_2)}
 \Bigg) R_1(X)R_2(Y) y_1y_2 \\
&=\Bigg(\frac{(x_2^2-q^2x_1^2)X -
(x_2-qx_1)Y}{(x_1-x_2)(x_2-qx_1)} +
\frac{(x_1-qx_2)X-(x_1-qx_2)Y}{(x_2-x_1)(x_1-qx_2)}\Bigg)
R_1(X)R_2(Y)y_1y_2
\\&=0
\end{align*}
This shows \eqref{eq:Qsecond} and completes the proof that $[t_i,t_j]=0$ for all $i,j$.

\subsection{Proof of Proposition~\ref{prp:tj_ek}}

The relation \eqref{eq:ti_tj} holds by Proposition
\ref{prp:tcommutes}, while \eqref{eq:ek_el} holds by the
definition, \eqref{eq:ed_def}, of $e_d$. Relation \eqref{eq:tj_ek}
is trivial for $k=0$.

Using \eqref{eq:formula_for_tj} and that $w(e_k)=e_k$ for any
$w\in S_n$ we have, for any $j,k\in\{1,\ldots,n\}$,
\[ 
(-1)^{j-1}\Delta\cdot t_je_k=\sum_{w\in S_n}
\sgn(w)w\Big(x_1^{n-2}x_2^{n-3}\cdots x_{n-2}
e_{n-j}'e_k(x_1,\ldots,x_{n-1},qx_n)y_n\Big)
\] 
Substituting $y_n=t_1+x_nt_2+\cdots+x_n^{n-1}t_n$ and using that
$w(t_i)=t_i$ for all $w\in S_n$ we get
$$ (-1)^{j-1}\Delta\cdot t_je_k =
$$ $$ \sum_{i=1}^n\sum_{w\in S_n}
\sgn(w)w\Big(x_1^{n-2}x_2^{n-3}\cdots x_{n-2} \cdot
x_n^{i-1}e_{n-j}'e_k(x_1,\ldots,x_{n-1},qx_n)\Big)t_i.$$
Write $e_{n-j}'$ as a sum of monomials $x_{i_1}\cdots x_{i_{n-j}}$
and $1\le i_1<\cdots <i_{n-j}\le n-1$. We claim that the only way
to get a nonzero contribution is when $i_r=r$ for all $r$. Indeed,
suppose $i_r>r$ for some $r$ chosen minimal. Then the product
\[x_1^{n-2}x_2^{n-3}\cdots x_{n-2} \cdot x_{i_1}\cdots x_{i_{n-j}}\cdot x_n^{i-1}e_k(x_1,\ldots,x_{n-1},qx_n)\]
will be
fixed by the transposition $(i_r-1\;\;i_r)$. Therefore, after
anti-symmetrization, the term will cancel out. In other words,
the substitution $w\mapsto w\cdot(i_r-1\;\; i_r)$ in the sum
\[\sum_{w\in S_n} \sgn(w)w\Big(x_1^{n-2}x_2^{n-3}\cdots x_{n-2} \cdot x_{i_1}\cdots x_{i_{n-j}}\cdot x_n^{i-1}e_k(x_1,\ldots,x_{n-1},qx_n)\Big)t_i\]
gives the same expression with opposite sign, proving it is zero.
Thus, noting also that \[e_k(x_1,\ldots,x_{n-1},qx_n)=e_k' + qx_n
e_{k-1}',\] we have
\begin{multline}\label{eq:tj_ek_second_step}
(-1)^{j-1}\Delta\cdot t_je_k\\ =\sum_{i=1}^n\sum_{w\in S_n}
\sgn(w)w\Big(x_1^{n-1}\cdots x_{n-j}^j\cdot x_{n-j+1}^{j-2}\cdots
x_{n-2} x_n^{i-1} (e_k'+qx_ne_{k-1}')\Big)t_i.
\end{multline}

\paragraph{\bf The term $i=j$:}
Write $e_k'= \sum_{1\le i_1<\cdots <i_k\le n-1} x_{i_1}\cdots
x_{i_k}$. Consider
\[x_1^{n-1}\cdots x_{n-j}^j x_{n-j+1}^{j-2}\cdots x_{n-2} x_n^{j-1} \cdot x_{i_1}\cdots x_{i_k}\]
An expression like this containing factors $(x_rx_{r'})^s$ ($r\neq
r'$) will become zero after anti-symmetrization. If $n-j\ge k$
there is a unique way to get a nonzero result, namely to choose
$(i_1,\ldots,i_k)=(1,2,\ldots,k)$. If $n-j<k$ there is no way to
get nonzero result. Thus
\begin{align*}
&\sum_{w\in S_n} \sgn(w)w\Big(x_1^{n-1}\cdots x_{n-j}^j\cdot x_{n-j+1}^{j-2}\cdots x_{n-2} x_n^{j-1}e_k'\Big)  \\
&=\begin{cases}
a(n,n-1,\ldots,n-k+1,n-k-1,\ldots,j,j-2,\ldots,1,0,j-1),& j+k\le n\\
0,& j+k>n
\end{cases}
\end{align*}
where $a(i_1,\ldots,i_n):=\sum_{w\in S_n}\sgn(w)w(x_1^{i_1}\cdots
x_n^{i_n})$. Use that
$w(a(i_1,\ldots,i_n))=\sgn(w)a(i_1,\ldots,i_n)$ with
\[w=(n-j+1\quad n-j+2\quad\cdots\quad n),\]
which is a cycle of length $j$, to get
\begin{multline*}
a(n,n-1,\ldots,n-k+1,n-k-1,\ldots,j,j-2,\ldots,1,0,j-1)\\
=(-1)^{j-1}a(n,n-1,\ldots,n-k+1,n-k-1,\ldots,0).
\end{multline*}
Using that the Schur function
\[s_\la = a(\la_1+n-1,\la_2+n-2,\ldots,\la_n)/a(n-1,n-2,\ldots,0),\]
defined for a partition $\la=(\la_1,\ldots,\la_n)$,
$\la_1\ge\cdots\ge\la_n\ge 0$, satisfies $s_{1^k0^{n-k}}=e_k$ and
that $\Delta=a(n-1,n-2,\ldots,0)$ we get that
\begin{multline}
\sum_{w\in S_n} \sgn(w)w\Big(x_1^{n-1}\cdots x_{n-j}^j\cdot x_{n-j+1}^{j-2}\cdots x_{n-2} x_n^{j-1}e_k'\Big)  \\
=
\begin{cases}
(-1)^{j-1} \Delta\cdot e_k, & j+k\le n,\\
0,& j+k>n.
\end{cases}\label{eq:tj_ek_case_i_equals_j_1}
\end{multline}
Similarly, if we look at the term containing $qx_ne_{k-1}'$, there
is at most one tuple $(i_1,\ldots,i_{k-1})$, $1\le i_1<\cdots
<i_{k-1}\le n-1$ such that the antisymmetrization of
\[qx_1^{n-1}\cdots x_{n-j}^j x_{n-j+1}^{j-2}\cdots x_{n-2}x_n^j x_{i_1}\cdots x_{i_{k-1}}\]
is nonzero, namely $(i_1,\ldots,i_{k-1})=(1,\ldots,k-1)$ and this
time, due to the presence of $x_n^j$, it gives nonzero result if
and only if $k-1\ge n-j$ i.e. $j+k>n$. Thus
\begin{align*}
&\sum_{w\in S_n} \sgn(w)w\Big(x_1^{n-1}\cdots x_{n-j}^j\cdot x_{n-j+1}^{j-2}\cdots x_{n-2} \cdot qx_n^je_{k-1}'\Big) \\
&=\begin{cases}
0,& j+k\le n\\
qa(n,n-1,\ldots,j+1,j-1,\ldots,n-k+1,n-k-1,\ldots,1,0,j),& j+k>n
\end{cases}
\end{align*}
To get a descending sequence inside the parenthesis we apply the
cyclic permutation which places $j$ between $j-1$ and $j+1$. This
cycle has length $j$, giving a factor $(-1)^{j-1}$. As before,
this gives
\begin{multline}
\sum_{w\in S_n} \sgn(w)w\Big(x_1^{n-1}\cdots x_{n-j}^j\cdot x_{n-j+1}^{j-2}\cdots x_{n-2} \cdot qx_n^je_{k-1}'\Big) \\
=
\begin{cases}
0,& j+k\le n\\
(-1)^{j-1}q \Delta\cdot e_k, & j+k> n,\\
\end{cases}\label{eq:tj_ek_case_i_equals_j_q}
\end{multline}
Combining \eqref{eq:tj_ek_case_i_equals_j_1} and
\eqref{eq:tj_ek_case_i_equals_j_q} yields
\begin{multline}\label{eq:tj_ek_third_step}
(-1)^{j-1}\Delta\cdot (t_je_k - q^{\delta_{j+k>n}} e_kt_j) \\
= \sum_{i\in\{1,\ldots,n\}\setminus\{j\}}\sum_{w\in S_n}
\sgn(w)w\Big(x_1^{n-1}\cdots x_{n-j}^j\cdot x_{n-j+1}^{j-2}\cdots
x_{n-2} x_n^{i-1} (e_k'+qx_ne_{k-1}')\Big)t_i.
\end{multline}

\paragraph{\bf The terms where $i>j$:}
We first look at the $e_k'$ term in \eqref{eq:tj_ek_third_step}.
That $i>j$ means the exponent $i-1$ of $x_n$ occurs in one of the
exponents in $x_1^{n-1}x_2^{n-2}\cdots x_{n-j}^j$, namely in
$x_{n-(i-1)}^{i-1}$. Therefore $x_{i_1}\cdots x_{i_k}$ must
contain $x_1x_2\cdots x_{n-(i-1)}$. In particular $k\ge n-(i-1)$.
The remaining factors must be $x_{n-j+1}x_{n-j+2}\cdots $ and they
cannot continue beyond $x_{n-1}$ meaning that $k-(n-i+1) + (n-j)
\le n-1$. Thus the following inequalities are necessary conditions
in order to avoid having two variables with the same exponent:
\[k\ge  n-i+1,\quad\text{ and }\quad k+i-j-1\le n-1,\]
i.e.
\[n-k+1\le i\le n-k+j.\]
If these inequalities hold there is a unique tuple
\[(i_1,\ldots,i_k)=(1,2,\ldots,n-i+1, n-j+1,n-j+2,\ldots,k+i-j-1)\]
with $1\le i_1<\cdots<i_k\le n-1$ such that
\[\sum_{w\in S_n} \sgn(w)w\Big(x_1^{n-1}\cdots x_{n-j}^j\cdot x_{n-j+1}^{j-2}\cdots x_{n-2}^2 x_n^{i-1} \cdot x_{i_1}\cdots x_{i_k}\Big).
\]
is nonzero. With this choice we get
\begin{align}
\sum_{w\in S_n} &\sgn(w)w\Big(x_1^{n-1}\cdots x_{n-j}^j\cdot
x_{n-j+1}^{j-2}\cdots x_{n-2} x_n^{i-1} \cdot x_{i_1}\cdots
x_{i_k}\Big)
\nonumber \\
&= a(n,\ldots,i,i-2,\ldots,n-(k+i-j)+1,n-(k+i-j)-1,\ldots,0,i-1)\nonumber \\
&= (-1)^i a(n,n-1,\ldots,n-(k+i-j)+1,n-(k+i-j)-1,\ldots,0) \nonumber \\
&= (-1)^i \Delta\cdot e_{k+i-j} \label{eq:tj_ek_i_bigger_than_j_1}
\end{align}
where we applied the cyclic permutation $(n-i+2\;\; n-i+3\;\;
\cdots\;\; n-1\;\; n)$ of length $i-1$ in the second equality.

The argument for the term containing $qx_1e_{k-1}'$ is analogous,
but gives an extra minus sign. Together with
\eqref{eq:tj_ek_i_bigger_than_j_1} one obtains that for $i>j$ we
have
\begin{multline}\label{eq:tj_ek_i_bigger_than_j}
\sum_{w\in S_n} \sgn(w)w\Big(x_1^{n-1}\cdots x_{n-j}^j\cdot
x_{n-j+1}^{j-2}\cdots x_{n-2} x_n^{i-1}
(e_k'+qx_ne_{k-1}')\Big)t_i
\\
=\begin{cases}
(-1)^{i+1}(q-1)\Delta \cdot e_{k+i-j}t_i,& n-k+1\le i\le n-k+j,\\
0,&\text{otherwise}.
\end{cases}
\end{multline}

\paragraph{\bf The terms where $i<j$:}
We look at the $e_k'$ term in \eqref{eq:tj_ek_third_step}.
Necessary conditions for nonzero contribution are $k\ge j-i$ and
$k-(j-i)\le n-j$, i.e.
\begin{equation}
j-k\le i \le n-k.
\end{equation}
After a similar computation as the $i>j$ case we obtain
\begin{multline}\label{eq:tj_ek_i_less_than_j}
\sum_{w\in S_n} \sgn(w)w\Big(x_1^{n-1}\cdots x_{n-j}^j\cdot
x_{n-j+1}^{j-2}\cdots x_{n-2} x_n^{i-1}
(e_k'+qx_ne_{k-1}')\Big)t_i
\\
=\begin{cases}
(-1)^i(q-1)\Delta\cdot e_{k+i-j}t_i , & j-k\le i\le n-k\\
0,& \text{otherwise}.
\end{cases}
\end{multline}
Combining \eqref{eq:tj_ek_i_less_than_j},
\eqref{eq:tj_ek_i_bigger_than_j} and \eqref{eq:tj_ek_third_step}
we obtain
\begin{multline}
t_je_k - q^{\delta_{j+k>n}}e_kt_j =(q-1)\sum_{\substack{i>j\\
n-k+1\le i\le n-k+j}}
(-1)^{j-1+i+1} e_{k+i-j} t_i \\
+ (q-1)\sum_{\substack{i<j\\ j-k\le i\le n-k}} (-1)^{j-1+i}
e_{k+i-j}t_i
\end{multline}
Making the change of summation variables $i\mapsto i+j$ we get
\begin{multline}
t_je_k - q^{\delta_{j+k>n}}e_kt_j =(q-1)\sum_{\substack{i>0\\
n-(j+k)+1\le i\le n-k}}
(-1)^{i+\delta_{i<0}} e_{k+i} t_{j+i} \\
+ (q-1)\sum_{\substack{i<0\\ -k\le i\le n-(j+k)}}
(-1)^{i+\delta_{i<0}} e_{k+i}t_{j+i}.
\end{multline}
In the first sum, the condition $i\le n-k$ is redundant since,
 by the notational convention, $e_{k+i}=0$ for $i>n-k$. Similarly, $-k\le i$
 is superfluous in the second sum. Thus we obtain \eqref{eq:tj_ek}.

\subsection{Proof of Proposition~\ref{prp:inductive_procedure}}

First note that \eqref{eq:TjEk_assumption} implies that
\begin{align}
\label{eq:TjE0}
[T_j,E_0] &= 0,\quad\forall j\in\iv{1}{n},\\
\label{eq:TjEn} [T_j, E_n]_q &=0,\quad\forall j\in\iv{1}{n}.
\end{align}
We now prove \eqref{eq:tilde_T_tilde_E}.
Let $j\in\iv{1}{n-1}$ and $k\in\iv{0}{n-1}$. Then the left hand
side of \eqref{eq:tilde_T_tilde_E} equals
\begin{align}
[\widetilde{E}_k, \widetilde{T}_j]_{q^{\delta_{j+k>n-1}}} =&
[T_{k+1}, E_jT_1T_n-(-1)^jE_0T_{n-j}T_1-(-1)^{n-j}E_nT_{n+1-j}T_n]_{q^{\delta_{j+1+k>n}}} \\
\label{eq:tilde_T_tilde_E_proof_0}
=&(q-1)\sum_{i\in\Z\setminus I(n-1-j-k)} (-1)^{i+\delta_{i<0}} E_{j+i} T_{k+1+i} T_1T_n \\
\label{eq:tilde_T_tilde_E_proof_1}
&-(1-q^{\delta_{j+k+1>n}})(-1)^jE_0T_{k+1}T_{n-j}T_1\\
\label{eq:tilde_T_tilde_E_proof_2}
&-(q-q^{\delta_{j+k+1>n}})(-1)^{n-j}E_nT_{k+1}T_{n+1-j}T_n.
\end{align}
By \eqref{eq:T_identity_1} with $(j,k)$ replaced by $(k+1,n-j)$,
the term \eqref{eq:tilde_T_tilde_E_proof_1} equals
\begin{equation} \label{eq:tTtEpf1}
-(q-1)\sum_{i\in\Z\setminus
I(n-1-j-k)}(-1)^{j+\delta_{i<0}}E_0T_{k+1+i}T_{n-j-i}T_1.
\end{equation}
Similarly, applying \eqref{eq:T_identity_2} with $(j,k)$ replaced
by $(k+1,n+1-j)$ shows that \eqref{eq:tilde_T_tilde_E_proof_2} is
equal to
\begin{equation}\label{eq:tTtEpf2}
-(q-1)\sum_{i\in\Z\setminus
I(n-1-j-k)}(-1)^{n-j+\delta_{i<0}}E_nT_{k+1+i}T_{n+1-j-i}T_n.
\end{equation}
Adding together \eqref{eq:tTtEpf1}, \eqref{eq:tTtEpf2} and
\eqref{eq:tilde_T_tilde_E_proof_0} gives the right hand side of
\eqref{eq:tilde_T_tilde_E}. This proves
\eqref{eq:tilde_T_tilde_E}.

In particular, taking $k=0$ and $k=n-1$ in
\eqref{eq:tilde_T_tilde_E} we get
\begin{align}
\label{eq:T_tilde_T_1}
T_1\widetilde{T}_j-\widetilde{T}_jT_1 &=0,\\
\label{eq:T_tilde_T_2} T_n\widetilde{T}_j-q\widetilde{T}_jT_n &=0,
\end{align}
for all $j\in\iv{1}{n-1}$. Using these identities, together with
$[T_j,E_0]=0$ and $[T_j,E_n]_q=0$ which follow from
\eqref{eq:TjEk_assumption}, one can check that
\[q\widetilde{T}_j=T_1T_n\widetilde{T}_j(T_1T_n)^{-1}=
T_nT_1E_j-(-1)^{j}T_1T_{n-j}E_0-(-1)^{n-j}T_nT_{n+1-j}E_n,\]
proving \eqref{eq:tilde_T_alt}.

That \eqref{eq:tilde_E_commute} holds is trivial from the
assumption \eqref{eq:TiTj_assumption}.

We now prove \eqref{eq:tilde_T_commute}. Let $j,k\in\iv{1}{n-1}$.
We will bring $\widetilde{T}_j\widetilde{T}_k$ to the normal form
where all the $E$'s are to the left of all the $T$'s and prove
that the resulting expression is symmetric in $j,k$. We may assume
$j\neq k$. Using \eqref{eq:T_tilde_T_1}, \eqref{eq:T_tilde_T_2}
and \eqref{eq:tilde_T_tilde_E}, we have
\begin{align}
\widetilde{T}_j\widetilde{T}_k =& (E_jT_1T_n-(-1)^jE_0T_{n-j}T_1-(-1)^{n-j}E_nT_{n+1-j}T_n)\widetilde{T}_k\nonumber\\
=&qE_j\widetilde{T}_kT_1T_n-(-1)^jE_0T_{n-j}\widetilde{T}_kT_1-(-1)^{n-j}qE_nT_{n+1-j}\widetilde{T}_kT_n\nonumber\\
=&qE_j\widetilde{T}_kT_1T_n\nonumber\\
&-(-1)^jE_0\Big(q^{\delta_{-j+k>0}}\widetilde{T}_k T_{n-j}+(q-1)\sum_{i\in\Z\setminus I(j-k)}(-1)^{i+\delta_{i<0}}\widetilde{T}_{k+i}T_{n-j+i}\Big)T_1\nonumber\\
&-(-1)^{n-j}qE_n\Big(q^{\delta_{1-j+k>0}}\widetilde{T}_kT_{n+1-j}+ \nonumber \\
&\qquad (q-1)\sum_{i\in\Z\setminus I(-1+j-k)} (-1)^{i+\delta_{i<0}} \widetilde{T}_{k+i} T_{n+1-j+i}\Big)T_n\nonumber\\
=&qE_jE_kT_1^2T_n^2 - (-1)^kqE_jE_0T_{n-k}T_1^2T_n -(-1)^{n-k}qE_jE_nT_{n+1-k}T_1T_n^2\nonumber\\
&-(-1)^jq^{\delta_{k>j}} E_0E_kT_{n-j}T_1^2T_n +(-1)^{j+k}q^{\delta_{k>j}}E_0^2T_1^2 T_{n-k}T_{n-j} + \nonumber\\
&\qquad\qquad +(-1)^{n-k+j}q^{\delta_{k>j}} E_0E_nT_{n-j}T_{n+1-k}T_1T_n\nonumber\\
&-(-1)^j(q-1)\sum_{i\in\Z\setminus I(j-k)}(-1)^{i+\delta_{i<0}} E_0\Big(E_{k+i}T_1T_n \nonumber\\
&\qquad\qquad-(-1)^{k+i}E_0T_{n-k-i}T_1-(-1)^{n-k-i}E_nT_{n+1-k-i}T_n\Big)T_{n-j+i}T_1\nonumber\\
&-(-1)^{n-j}q^{1+\delta_{k\ge j}}E_nE_k T_1T_n T_{n+1-j}T_n + (-1)^{n-j+k}q^{1+\delta_{k\ge j}}E_nE_0T_{n-k}T_1T_{n+1-j}T_n\nonumber\\
&\qquad\qquad+ (-1)^{2n-j-k}q^{1+\delta_{k\ge j}} E_n^2T_{n+1-k}T_n^2T_{n+1-j}\nonumber\\
&-(-1)^{n-j}q(q-1)\sum_{i\in\Z\setminus I(-1+j-k)} (-1)^{i+\delta_{i<0}}E_n\Big(E_{k+i}T_1T_n\nonumber\\
\label{eq:big_sum_of_stuff} &\qquad\qquad
-(-1)^{k+i}E_0T_{n-k-i}T_1-(-1)^{n-k-i}E_nT_{n+1-k-i}T_n\Big)T_{n+1-j+i}T_n.
\end{align}
We prove that all parts of this expression are symmetric in $j,k$.
The first term, containing $E_jE_k$, is trivially symmetric.\\

\paragraph{\bf The terms containing $E_0^2T_1^2$.}
There are two terms in \eqref{eq:big_sum_of_stuff} containing
$E_0^2T_1^2$:
\begin{equation}\label{eq:E02_step}
(-1)^{j+k}q^{\delta_{k>j}}E_0^2T_{n-k}T_{n-j}T_1^2  +
(-1)^{j+k}(q-1)\sum_{i\in\Z\setminus I(j-k)} (-1)^{\delta_{i<0}}
E_0^2T_1^2T_{n-k-i}T_{n-j+i}.
\end{equation}
Applying \eqref{eq:T_identity_1} with $(j,k)$ replaced by
$(n-j,n-k)$ we get that \eqref{eq:E02_step} equals
\[
(-1)^{j+k}E_0^2T_{n-j}T_{n-k}T_1^2
\]
which is symmetric in $j,k$.\\

\paragraph{\bf The terms containing $E_n^2T_n^2$.}
\begin{multline}\label{eq:En2_step}
(-1)^{2n-j-k}q^{1+\delta_{k\ge j}} E_n^2T_n^2T_{n+1-k}T_{n+1-j} \\
+(-1)^{2n-j-k}q(q-1)\sum_{i\in\Z\setminus I(-1+j-k)}
(-1)^{\delta_{i<0}} E_n^2T_n^2T_{n+1-k-i}T_{n+1-j+i}
\end{multline}
Here we can apply \eqref{eq:T_identity_2} with $(j,k)$ replaced by
$(n+1-j,n+1-k)$ to see that \eqref{eq:En2_step} equals
\[(-1)^{j+k}q^2 E_n^2T_n^2T_{n+1-j}T_{n+1-k}\]
which is symmetric in $j,k$.\\

\paragraph{\bf The terms containing $E_0T_1^2T_n$.}
\begin{multline}\label{eq:T12Tn_step}
-E_0\Big((-1)^kqE_jT_{n-k}
+(-1)^jq^{\delta_{k>j}} E_0E_k T_{n-j}\\
+(-1)^j(q-1)\sum_{i\in\Z\setminus I(j-k)} (-1)^{i+\delta_{i<0}}
E_0E_{k+i}T_{n-j+i}\Big)T_1^2T_n
\end{multline}
The parenthesis equals
\begin{multline}\label{eq:junk_237}
(-1)^kE_j T_{n-k} + (-1)^j E_kT_{n-j} + \\
(-1)^k(q-1)E_jT_{n-k} + (-1)^j(q^{\delta_{k>j}}-1)E_kT_{n-j}+\\
(-1)^j(q-1)\sum_{i\in\Z\setminus I(j-k)} (-1)^{i+\delta_{i<0}}
E_{k+i} T_{n-j+i}.
\end{multline}
If $j\ge k$, we can include $(-1)^k(q-1)E_jT_{n-k}$ as the term
$i=j-k$ in the sum. If $j<k$, the term $(-1)^k(q-1)E_jT_{n-k}$
cancels the term $i=j-k$ in the sum, and
$(-1)^j(q^{\delta_{k>j}}-1)E_kT_{n-j}$ may be included in the sum
as $i=0$. Thus \eqref{eq:junk_237} can be written
\begin{multline}
(-1)^kE_jT_{n-k}+(-1)^jE_kT_{n-j}\\
+(-1)^j(q-1)\sum_{\substack{i\in
\begin{cases}
\Z\setminus\iv{0}{j-k-1},& j\ge k\\
\Z\setminus\iv{j-k}{-1},& j<k
\end{cases}}}
 (-1)^{i+\delta_{i<0}} E_{k+i}T_{n-j+i}.
\end{multline}
Making the change of variables $i\mapsto i+j-k$ in this sum gives
the same expression but with $j$ and $k$ interchanged. Thus it is
symmetric in $j$ and $k$.\\

\paragraph{\bf The terms containing $E_nT_1T_n^2$.}
\begin{multline}
-qE_n\Big((-1)^{n-k}E_jT_{n+1-k}
+(-1)^{n-j}q^{\delta_{k\ge j}} E_k T_{n+1-j} \\
+(-1)^{n-j}(q-1)\sum_{i\in\Z\setminus I(-1+j-k)}
(-1)^{i+\delta_{i< 0}} E_{k+i} T_{n+1-j+i} \Big)T_1T_2^2
\end{multline}
Similarly to the previous case, the expression inside the
parenthesis can be written as
\begin{multline}
(-1)^{n-k}q E_jT_{n+1-k} + (-1)^{n-j} q E_kT_{n+1-j}\\
+(-1)^{n-j}(q-1)\sum_{\substack{i\in
\begin{cases}
\Z\setminus\iv{1}{j-k},&j>k\\
\Z\setminus\iv{j-k+1}{0},&j\le k
\end{cases}}}
(-1)^{i+\delta_{i\le 0}} E_{k+i}T_{n+1-j+i}.
\end{multline}
Substituting $i\mapsto i-k+j$ one checks this is symmetric in $j$
and $k$.\\

\paragraph{\bf The terms containing $E_0E_nT_1T_n$.}
Finally, there are four terms in \eqref{eq:big_sum_of_stuff}
containing $E_0E_nT_1T_n$:
\begin{multline}\label{eq:E0EnT1Tn_step}
E_0E_n\Big((-1)^{n-k+j}q^{\delta_{k> j}} T_{n+1-k}T_{n-j}
+ (-1)^{n-j+k}q^{1+\delta_{k\ge j}} T_{n-k}T_{n+1-j}\\
+ (-1)^{n-k+j}(q-1)\sum_{i\in\Z\setminus I(j-k)} (-1)^{\delta_{i<0}} T_{n+1-k-i}T_{n-j+i}\\
+ (-1)^{n-j+k}q(q-1)\sum_{i\in\Z\setminus I(-1+j-k)
}(-1)^{\delta_{i<0}} T_{n-k-i} T_{n+1-j+i}
 \Big) T_1T_n
\end{multline}
Applying
 \eqref{eq:T_identity_1} with $(j,k)$ replaced by $(n-j+1,n-k)$
and
 \eqref{eq:T_identity_2} with $(j,k)$ replaced by $(n-j,n-k+1)$
we obtain that the parenthesis in \eqref{eq:E0EnT1Tn_step} equals
\[(-1)^{n+j-k}qT_{n+1-k}T_{n-j} + (-1)^{n+k-j}qT_{n+1-j}T_{n-k}\]
which is symmetric in $j$ and $k$. This completes the proof that
\eqref{eq:big_sum_of_stuff} is symmetric in $j$ and $k$. Thus
\eqref{eq:tilde_T_commute} holds.

The last statement about generators follows from the fact that
\eqref{eq:tilde_T_def} and \eqref{eq:tilde_E_def} can be used to
express $E_j$ for $j\in\iv{1}{n-1}$ and $T_k$ for $k\in\iv{1}{n}$,
in terms of the new generators
$\{E_0,E_n\}\cup\{\widetilde{T}_j\}_{j=1}^{n-1}\cup\{\widetilde{E}_k\}_{k=0}^{n-1}$.

\subsection{Example: The case \texorpdfstring{$n=2$}{n=2}}
If $n=2$ then \eqref{eq:tsystem} becomes
\[y_1=t_1+x_1 t_2 ,\qquad y_2=t_1+x_2 t_2\]
and from this, or using \eqref{eq:formula_for_tj}, we get
\begin{align*}
t_1 &=(x_1-x_2)^{-1}(x_1y_2-x_2y_1),\\
t_2 &=-(x_1-x_2)^{-1}(y_2-y_1).
\end{align*}
By definition \eqref{eq:ed_def}, we have
\[e_0=1,\quad e_1=x_1+x_2, \quad e_2=x_1x_2. \]
By Corollary~\ref{cor:generators_of_skewfield_invariants},
$\K_q(\bar x,\bar y)^{S_2}$ is generated as a skew field over $\K$
by $e_1,e_2, t_1, t_2$. By Proposition~\ref{prp:tj_ek} we have the
following relations:
\begin{align*}
t_1 t_2 &= t_2 t_1, \\
e_1 e_2 &= e_2 e_1, \\
t_1 e_2 & = q e_2 t_1, \\
t_2 e_2 & = q e_2 t_2, \\
t_1 e_1 & = e_1 t_1 + (1-q)e_2 t_2, \\
t_2 e_1 & = q e_1 t_2 + (q-1)t_1.
\end{align*}
Using the notation in \eqref{eq:qNoether_XY_def} and
\eqref{eq:eki_def} we have
\begin{align*}
X_1 &= e_2^{(0)} = e_2, \\
X_2 &= e_1^{(1)} = t_2, \\
Y_1 &= e_0^{(1)} = t_1, \\
Y_2 &= e_0^{(2)} = e_1^{(0)}e_0^{(1)}e_1^{(1)}+e_0^{(0)}e_0^{(1)}e_0^{(1)}+e_2^{(0)}e_1^{(1)}e_1^{(1)}=\\
&= e_1t_1t_2+e_0t_1^2+e_2t_2^2.
\end{align*}
By \eqref{eq:qNoether_XY_relations} or direct computations,
\begin{alignat*}{3}
[Y_1,Y_2]&=0,&\qquad [X_2,X_1]_q &=0,\\
[Y_1,X_2]&=0,&\qquad [Y_2,X_1]_{q^2} &=0,\\
[Y_1,X_1]_q&=0,&\qquad [Y_2,X_2]_{q^{-1}} &=0.
\end{alignat*}
Thus, $(Z_1,Z_2,Z_3,Z_4)=(X_1,Y_1,X_2,Y_2)$ satisfy
$Z_iZ_j=q^{s_{ij}}Z_jZ_i$ with
\[
(s_{ij})=\begin{bmatrix}
0 & -1 & -1 & -2 \\
1 & 0  &  0 & 0 \\
1 & 0 &  0  & 1 \\
2 & 0 & -1 & 0
\end{bmatrix}.
\]
Using the definition \eqref{eq:qNoether_hatXhatY_def},
\[\widehat{X}_1=X_1,\quad \widehat{X}_2=Y_1X_2^{-1},
\quad \widehat{Y}_1=Y_1,\quad\widehat{Y}_2=Y_1^{-2}Y_2.\] By
Theorem~\ref{thm:qNoether_main_result},
 $\widehat{X}_1, \widehat{X}_2, \widehat{Y}_1,\widehat{Y}_2$ generate
 $\K(\bar x,\bar y)^{S_2}$ as a skew field and the following relations hold:
\begin{gather*}
 [\widehat{X}_1,\widehat{X}_2]=0,\qquad [\widehat{Y}_1,\widehat{Y}_2]=0,\\
 \widehat{Y}_i\widehat{X}_j = q^{\delta_{ij}} \widehat{X}_j\widehat{Y}_i,\qquad\forall i,j\in\{1,2\}.
\end{gather*}
This shows that $\K_q(x_1,x_2,y_1,y_2)^{S_2}\simeq
\K_q(x_1,x_2,y_1,y_2)$.

\subsection{Example: The case \texorpdfstring{$n=3$}{n=3}}
The elementary symmetric polynomials $e_d$ are
\begin{align*}
e_0 &= 1,\\
e_1 &=x_1+x_2+x_3,\\
e_2 &=x_1x_2+x_2x_3+x_3x_1,\\
e_3 &=x_1x_2x_3.
\end{align*}
By \eqref{eq:formula_for_tj} we have
\begin{align*}
t_1&=\Delta^{-1}\cdot\big( (x_2^2x_3-x_3^2x_2)y_1+(x_3^2x_1-x_1^2x_3)y_2+(x_1^2x_2-x_2^2x_1)y_3\big),\\
t_2&=\Delta^{-1}\cdot\big( (x_2^2-x_3^2)y_1+(x_3^2-x_1^2)y_2+(x_1^2-x_2^2)y_3\big),\\
t_3&=\Delta^{-1}\cdot\big(
(x_2-x_3)y_1+(x_3-x_1)y_2+(x_1-x_2)y_3\big),
\end{align*}
where
\[\Delta=(x_1-x_2)(x_1-x_3)(x_2-x_3).\]
By Corollary ~\ref{cor:generators_of_skewfield_invariants},
$\K_q(\bar x,\bar y)^{S_3}$ is generated as a skew field over $\K$
by $e_1,e_2,e_3,t_1,t_2,t_3$ and by Proposition \ref{prp:tj_ek} or
direct computations, we have the following relations:
\begin{align*}
[t_i,t_j]&=0, \quad\forall i,j\in\{1,2,3\},\\
[e_i,e_j]&=0,\quad\forall i,j\in\{1,2,3\},\\
[t_i,e_3]_q&=0,\quad\forall  i\in\{1,2,3\},\\
[t_1,e_1] &= (q-1)e_3 t_3,\\
[t_2,e_1] &= (q-1)(t_1 -e_2 t_3), \\
[t_3,e_1]_q &= (q-1)t_2,\\
[t_1,e_2] &= (1-q)e_3 t_2, \\
[t_2,e_2]_q &= (1-q)(e_3t_3-e_1 t_1),\\
[t_3,e_2]_q &= (1-q)t_1.
\end{align*}
By \eqref{eq:qNoether_XY_def} and \eqref{eq:eki_def},
\begin{align*}
X_1&=e_3^{(0)} = e_3=x_1x_2x_3,\\
X_2&=e_2^{(1)} = t_3,\\
X_3&=e_1^{(2)} = e_2^{(0)}e_0^{(1)}e_2^{(1)}-e_0^{(0)}e_0^{(1)}e_0^{(1)}+e_3^{(0)}e_1^{(1)}e_2^{(1)}=\\
&=e_2t_1t_3-e_0t_1^2+e_3t_2t_3,\\
Y_1&=e_0^{(1)}=t_1,\\
Y_2&=e_0^{(2)}=e_1^{(0)}e_0^{(1)}e_2^{(1)}+e_0^{(0)}e_1^{(1)}e_0^{(1)}-e_3^{(0)}e_2^{(1)}e_2^{(1)}=\\
&=e_1t_1t_3+e_0t_2t_1-e_3t_3^2,\\
Y_3&=e_0^{(3)}=e_1^{(1)}e_0^{(2)}e_1^{(2)}+e_0^{(1)}e_0^{(2)}e_0^{(2)}+e_2^{(1)}e_1^{(2)}e_1^{(2)}=\\
&= t_2Y_2X_3+t_1Y_2^2+t_3X_3^2.
\end{align*}
By \eqref{eq:qNoether_XY_relations},
\begin{alignat*}{3}
[Y_k,Y_i]&=0,&\quad\forall k,i\in\{1,2,&3\},\\
[X_2,X_1]_q&=0,&\quad [X_3,X_1]_{q^2}&=0,&\quad [X_3,X_2]_{q^{-1}}&=0,\\
[Y_1,X_2]&=0,&\quad [Y_1,X_3]&=0,&\quad [Y_2,X_3]&=0,\\
[Y_1,X_1]_q&=0,&\quad [Y_2,X_1]_{q^2}&=0,&\quad [Y_3,X_1]_{q^5}&=0,\\
[Y_2,X_2]_{q^{-1}} &=0,&\quad [Y_3,X_2]_{q^{-2}}&=0,&\quad
[Y_3,X_3]_q&=0.
\end{alignat*}
Thus, if we let $(Z_1,Z_2,\ldots,Z_6)=(X_1,Y_1,X_2,Y_2,X_3,Y_3)$,
then $Z_iZ_j=q^{s_{ij}}Z_jZ_i$ with
\begin{equation}
(s_{ij})=\begin{bmatrix}
0 &-1 &-1 &-2 &-2 &-5 \\
1 & 0 & 0 & 0 & 0 & 0 \\
1 & 0 & 0 & 1 & 1 & 2 \\
2 & 0 &-1 & 0 & 0 & 0 \\
2 & 0 &-1 & 0 & 0 &-1 \\
5 & 0 &-2 & 0 & 1 & 0
\end{bmatrix}.
\end{equation}
By performing simultaneous elementary row and column
transformations, this matrix can be brought to the skew normal
form
\begin{equation}
\begin{bmatrix}
0 & 1 & 0 & 0 & 0 & 0 \\
-1& 0 & 0 & 0 & 0 & 0 \\
0 & 0 & 0 &-1 & 0 & 0 \\
0 & 0 & 1 & 0 & 0 & 0 \\
0 & 0 & 0 & 0 & 0 & 1 \\
0 & 0 & 0 & 0 &-1 & 0
\end{bmatrix}.
\end{equation}
As in \eqref{eq:qNoether_hatXhatY_def}, changing generators to
\begin{gather*}
\widehat{X}_1=X_1,\quad \widehat{X}_2=Y_1X_2^{-1},\quad \widehat{X}_3=Y_2^{-1}X_3,\\
\widehat{Y}_1=Y_1,\quad \widehat{Y}_2=Y_1^{-2}Y_2,\quad
\widehat{Y}_3=Y_1^{-1}Y_2^{-2}Y_3.
\end{gather*}
one can also verify directly that
\begin{gather*}
[\widehat{X}_i,\widehat{X}_j]=[\widehat{Y}_i,\widehat{Y}_j]=0,\quad\forall i,j\in\{1,2,3\},\\
\widehat{Y}_i\widehat{X}_j = q^{\delta_{ij}} \widehat{X}_j
\widehat{Y}_i,\quad\forall i,j\in\{1,2,3\},
\end{gather*}
which means that there is an isomorphism of skew fields
\begin{align*}
 \K_q(\bar x,\bar y) &\overset{\sim}{\longrightarrow} \K_q(\bar x,\bar y)^{S_3}\\
 x_i &\longmapsto \widehat{X}_i,\qquad\forall i\in\{1,2,3\},\\
 y_i &\longmapsto \widehat{Y}_i,\qquad\forall i\in\{1,2,3\}.\\
\end{align*}

\section{Acknowledgment}
The first author is grateful to the Max Planck Institute for
Mathematics in Bonn for support and hospitality during his visit.
The first author is supported in part by the CNPq grant
(301743/2007-0) and by the Fapesp grant (2010/50347-9).

The  authors are grateful to Michel Van den Bergh, Fedor Malikov,
Eugene Mukhin and Alan Weinstein for encouraging discussions.

\end{document}